\documentclass[a4paper,11pt]{amsproc}
\usepackage{amssymb}
\usepackage{amscd}
\usepackage{amsfonts}
\usepackage{amssymb}
\usepackage{cite}
\usepackage{amsmath}
\usepackage{epsfig}
\usepackage{tikz}
\usepackage{graphicx, subfigure}
\usepackage{color, graphics}
\usepackage{pdfsync}
\usepackage{hyperref}
\usepackage[all,cmtip]{xy}
\usepackage{setspace}
\usepackage[normalem]{ulem}
\theoremstyle{plain}
 \newtheorem{thm}{Theorem}[section]
 
 \newtheorem{lem}{Lemma}[section]
 
\theoremstyle{definition}
 \newtheorem{exm}{Example}[section]
 
\theoremstyle{remark}
 \newtheorem{rem}{Remark}[section]
 \numberwithin{equation}{section}

\renewcommand{\le}{\leqslant}
\renewcommand{\ge}{\geqslant}
\renewcommand{\setminus}{\smallsetminus}

\allowdisplaybreaks

\newcommand{\R}{\mathbb{ R}}

\DeclareMathOperator{\diag}{diag}

\def\mod {\mathrm{mod}}

\def\diag{\mathrm{diag}}
\def\max{\mathrm{max}}

\def \i{{\rm i}}

\def\px1{p_{x_1}}
\def\px2{p_{x_2}}
\def\pu1{p_{u_1}}

\title[Bungee and $C^1$ and $C^0$ Hamiltonian systems]{From bungee to $C^1$ and $C^0$ Hamiltonian systems and their integrability and nonintegrability}

\subjclass[2020]{37J35, 37E40, 70H06, 70H07, 37C83, 34A38}

\keywords{$C^0$ and $C^1$ Liuoville-Arnol'd theorem, gluing of integrable systems, Poincar\'e section, Moser's twist}

\author[V. Dragovi\'c, B. Gaji\'c, and B. Jovanovi\'c]{\bfseries Vladimir Dragovi\'c, Borislav Gaji\'c, and Bo\v zidar Jovanovi\'c}

\address{
Department of Mathematical Sciences, The University of Texas at Dallas,  USA
\\
Mathematical Institute, Serbian Academy of Sciences and Arts, Belgrade, Serbia}
\email{Vladimir.Dragovic@utdallas.edu}

\address{
Mathematical Institute, Serbian Academy of Sciences and Arts, Belgrade, Serbia}
\email{gajab@mi.sanu.ac.rs}

\address{
Mathematical Institute, Serbian Academy of Sciences and Arts, Belgrade, Serbia}
\email{bozaj@mi.sanu.ac.rs}

\begin{document}

\begin{abstract} We consider natural Hamiltonian systems with potentials that are $C^0$ or $C^1$ on a hypersurface and $C^{\infty}$-smooth in the complement  and introduce and study corresponding notions of  their integrabilty and non-integrability. As a motivating example, we derive and  analyze  models of bungee jumping. We provide  prototype examples of the Liuoville-Arnol'd theorem for $C^0$ and $C^1$ Hamiltonians.
\end{abstract}
\maketitle

\section{Introduction}

This work is motivated by bungee jumping. One very interesting modeling of bungee jumping, as an infinite-dimensional system, was given in \cite{LLM}. Here, we propose a different approach. We construct classical mechanical models with almost everywhere $C^\infty$-Hamiltonian functions that are of the class $C^1$ and $C^0$ on a given hypersurface $\Delta$.  By restricting the continuous in $t$ Hamiltonian dynamics, to a discrete dynamics on $\Delta$, we introduce and describe a return map,  an analog of a billiard map. The Hamiltonian systems that we construct in this paper are close to hybrid systems from \cite{CB} and
to magnetic billiards considered in \cite{Ga2021, Ga2025}  (see also \cite{GaRa2024}). The generated dynamics lies somewhere between usual smooth Hamiltonian systems and the refraction billiards from \cite{dBT, dBT2, BCdB}  and in spirit is also close to the so-called port-Hamiltonian systems, see e.g. \cite{CvdS}.

Let us note that a more realistic model of bungee would include air resistance producing the drag  force. However, such a system would be dissipative and thus out of scope of the current paper.

 Having that in mind, we first model bungee jumping in $\R^2\{x_1,x_2\}$, as a modification of the elastic pendulum. We define it as
a Hamiltonian system on $T^*\R^2$
with the Hamiltonian
\begin{equation}\label{eq:ham*}
H(x,p)=\frac12(p_1^2+p_2^2) +V^*(x), \qquad V^*(x)=g x_2+\frac12\sigma (\vert x\vert-\ell)^2\theta(\vert x\vert-\ell).
\end{equation}
Here $\theta$ is the Heaviside step function
\begin{equation*}
\theta(x) =
    \begin{cases}
        1, & \text{if } x>0,\\
        1/2, & \text{if } x=0,\\
        0, & \text{if } x<0,
    \end{cases}
\end{equation*}
and $\sigma, g>0$ are real parameters,  the coefficient of elasticity of the spring and the gravitational constant, respectively.
Thus, within the disk
\begin{equation}\label{eq:disk}
D=\{x=(x_1,x_2)\in\R^2 \,\vert\, \langle x,x\rangle=x_1^2+x_2^2 \le {\ell^2}\},
\end{equation}
centered at the origin $O(0,0)$ and of radius $\ell$, the system describes the motion of a particle of the unit mass in a homogeneous gravitational field. Outside the disk \eqref{eq:disk}, the motion is under the influence of an additional elastic spring.
By replacing the Heaviside function with a constant, one gets the elastic pendulum in a homogeneous gravitation field \cite{Ku}.
Using the smooth approximation of the Heaviside function
$$
\theta(x)=\lim_{k\rightarrow \infty} \theta_k(x),
$$
where
$$\theta_k(x)=\frac{1}{2} +\frac{1}{2}\tan kx,$$
we can consider the family of smooth Hamiltonians
$$
H_k(x,p)=\frac12 \langle p,p\rangle +V_k^*(x), \qquad V_k^*(x)=g x_2+\frac12\sigma (\vert x\vert-\ell)^2\theta_k(\vert x\vert-\ell),
$$
that in a limit produce our Hamiltonian $H=\lim_{k\rightarrow \infty} H_k$.
This construction generalizes to arbitrary dimensions in a straightforward fashion.

The Hamiltonian function \eqref{eq:ham*} is $C^1$-smooth on $T^*\R^2$ and $C^\infty$-smooth outside the hypersurface
\begin{align}\label{Delta}
%&\Delta_0=T^*_{\partial D}\partial D=\{(x,p)\,\vert\, \langle x,x\rangle=1,\, \langle p,x\rangle=0\}, \\
\Delta= T^*_{\partial D} \R^2=\{(x,p)\, \vert\, \langle x,x\rangle=x_1^2+x_2^2=\ell^2\}.
\end{align}

There is a natural identification of cotangent and tangent bundles with respect to the Euclidean metric. It gives a decomposition of $\Delta$ on a part tangent to the circle $\partial D$, a part with outgoing velocities $\Delta_+$, and a part with ingoing velocities $\Delta_-$:
\begin{align}
\label{Delta0}&\Delta_0=T^*\partial D=\{(x,p)\,\vert\, \langle x,x\rangle=\ell^2,\, \langle p,x\rangle=0\},\\
\label{Delta+}&\Delta_+=\{(x,p)\,\vert\, \langle x,x\rangle=\ell^2,\, \langle p,x\rangle>0\},\\
\label{Delta-}&\Delta_-=\{(x,p)\,\vert\, \langle x,x\rangle=\ell^2,\, \langle p,x\rangle<0\},\qquad  \Delta=\Delta_0\cup \Delta_+\cup\Delta_-.
\end{align}

The Hamiltonian vector field is  Lipschitz continuous implying  the uniqueness of the  solutions $(x(t),p(t))$ of the Hamiltoinian system, determined by the initial data. The solutions $(x(t),p(t))$  of the Hamiltonian system are $C^\infty$-smooth at $t_0$ for all $t_0$ such that  $(x(t_0),p(t_0))\notin\Delta$. Moreover, the trajectories $(x(t),p(t))$ that are tangent to $\Delta_0$
at the moment $t_0$ and  for which there exists $\epsilon>0$ such that $(x(t),p(t))$ either, for all  $t\in (t_0-\varepsilon,t_0+\varepsilon)$ belongs to the
region $\vert x(t)\vert \ge\ell$, or for all  $t\in (t_0-\varepsilon,t_0+\varepsilon)$ to the region $\vert x(t)\vert \le  \ell$, are also $C^\infty$-smooth at $t_0$.

The elastic pendulum was very well studied
in dimensions two and  three   (see e.g. \cite{Ku, L, MP, AMBC} and references therein). In particular, the elastic pendulum is not integrable. This indicates that the Hamiltonian system defined with the Hamiltonian function \eqref{eq:ham*} is not integrable as well.

As the next step in Section \ref{sec2}, we consider a simplified model without the gravitational field.
We will show that this simplified model is an example $C^1$-integrable system in Theorem \ref{th:integrability}. We will also describe the corresponding smooth return mappings, a discrete system obtained as the restriction of the continuous
Hamiltonian dynamics in $t$ to the Poincar\'e mappings of the hypersurfaces $\Delta_+$ and $\Delta_-$, see Theorem \ref{th:billiard}.

In Section \ref{sec3}  we consider $C^0$-modification  of the system \eqref{eq:ham*}, which is going to be defined by the Hamiltonian
\begin{equation}\label{eq:ham2}
H(x,p)=\frac12 \langle p,p\rangle +V(x), \qquad V(x)=gx_2+\frac12\rho (\vert x\vert^2-{\ell^2})\theta(\vert x\vert-\ell).
\end{equation}
Now, the Hamiltonian vector field is not Lipschitz continuous at $\Delta$. We can see the modified system as a $C^0$-gluing along $\Delta$
of two super-integrable systems: the motion in a homogeneous gravitational field in $\R^2$ with noncompact invariant manifolds and the motion under the influence of the Hook potential centered at $C(0,-{g}/{\rho})$ with compact invariant manifolds.
Both flows are transversal to $\Delta_+\cup\Delta_-$, where the dynamics can be naturally defined
(Section \ref{sec3}).
However,  there exists a singular set $\Pi\subset \Delta_0$, where the dynamics is not defined.
The trajectories that intersects $\Delta_+\cup\Delta_-$ define return maps that we explicitly derive in a closed algebraic form for the value of the energy $H=h<\rho\ell^2$
(Theorems \ref{th:prva} and \ref{th:druga}). The explicit form of the Poincar\'e map allows us to obtain  a Poincar\'e section (see Figure \ref{fig:haos}) indicating that the $C^0$-approximate system is not integrable. Exploiting further this explicit formula for the Poincar\'e map, we also prove that the origin is a non-degenerate non-resonant up to order four elliptic fixed point with the nonzero Moser twist coefficient. This implies that the origin of the Poincar\'e section is Lyapunov stable and the majority of the trajectories in its neighborhood belong to invariant circles (see \cite{Kr} for the background theory).

In Section \ref{sec4}, we  consider a special case of \eqref{eq:ham2} with $g=0$. We extend the dynamics for all points of $\Delta_0$ in this case.
As a result, we obtain a $C^0$-gluing of two super-integrable systems, to which we apply  a  $C^0$-version of the Liouville-Arnol'd theorem
and derive the associated integrable return maps  (Theorem \ref{integrabilniBilijar}).

 Note that the
$C^1$-Liouville-Arnol’d theorem was formulated recently in \cite{AX}, with a $C^2$-smooth  Hamiltonian and $C^1$-smooth first integrals. Also, there is a notion of $C^0$-integrability of Hamiltonian systems \cite{AABZ} and symplectic mappings \cite{BM, Si}. However, the classical mechanical examples of
 the Liouville-Arnol’d theorem with $C^1$ and $C^0$ Hamiltonian functions have not been studied yet. That is why
the above problems motivate us to introduce a novel concept of $C^0$-integrability with a $C^0$-Hamiltonian, that we do in Section \ref{sec5}.

\section{Construction of a $C^1$-integrable system}\label{sec2}

\subsection{A bungee without a gravitational field}

As the next step, we consider a planar case of the above Hamiltonian without a homogeneous gravitation field, thus assuming  $g=0$ in \eqref{eq:ham*}. The Hamiltonian function of this simplified system is
\begin{equation}\label{eq:ham}
H(x,p)=\frac12 \langle p,p\rangle+\frac12\sigma (\vert x\vert-\ell)^2\theta(\vert x\vert-\ell).
\end{equation}

Within the disk $D$ (see  \eqref{eq:disk}), the motion of the new Hamiltonian system is uniform,
\begin{equation}\label{ham1}
\dot x=p, \qquad \dot p=0,
\end{equation}
while outside the disk $D$, the motion is in the central force filed
\begin{equation}\label{ham2}
\dot x=p, \quad \dot p=\mathbf F, \quad \mathbf F(x)=-\sigma \frac{(\vert x\vert -\ell)}{\vert x\vert} x=-\sigma x+\sigma \ell \frac{x}{\vert x\vert} \quad \big(\mathbf F\vert_{\partial D}\equiv 0\big).
\end{equation}

Both regimes as independent systems are well-known, see e.g \cite{Ar}, and known to be integrable. The purpose of this study is to focus on the hybrid system, obtained by gluing together these two system. The hybrid system is also solvable by quadratures, due to the Noether first integral, that comes from the $SO(2)$-symmetry:
\[
S(x,p)=x_1 p_2-x_2 p_1;
\]
$S$ is known as {\it the area first integral} (see \cite{Ar}). Consider a nonzero level of the area integral $S(x,p)=s\ne 0$. In the polar canonical coordinates
\begin{align}
\label{polarne}&x_1=r\cos\varphi, \, x_2=r\sin\varphi,  \, p_1=p_r \cos\varphi- \frac{p_\varphi}{r}\sin\varphi, \, p_2=p_r \sin\varphi+ \frac{p_\varphi}{r}\cos\varphi,\\
\label{kanonska}&\omega=dp_1\wedge dx_1+dp_2\wedge dx_2=dp_r\wedge dr+dp_\varphi\wedge d\varphi,
\end{align}
the Hamiltonian \eqref{eq:ham}  takes the form
\[
H(r,\varphi,p_r,p_\varphi)=\frac12 \big(p_r^2+\frac{1}{r^2}p_\varphi^2\big)+\frac12\sigma (r-\ell)^2\theta(r-\ell).
\]

In these coordinates, the area first integral writes as $S=p_\varphi=r^2\dot\varphi$ and it corresponds to the cyclic coordinate $\varphi$. At the invariant non-zero level set $S=s\ne 0$,
after the elimination of the cyclic coordinate, which can be seen as an $SO(2)$-symplectic reduction, we get a one-dimensional reduced system
on $T^*\R_+$ with the Hamiltonian
\[
H_s(r,p_r)=\frac12 p_r^2+\frac{s^2}{2r^2}+\frac12\sigma (r-\ell)^2\theta(r-\ell).
\]
In the region $r\ge\ell$, the reduced Hamiltonian $H_s$ is
\[
H_s(r,p_r)=\frac12 p_r^2+V_s(r), \quad V_s(r)=\frac{1}2 \big(\frac{s^2}{r^2}+\sigma r^2-2\sigma r\ell+\sigma {\ell^2}\big).
\]
For $0<r\le\ell$, the same Hamiltonian is
\[
H_s(r,p_r)=\frac12 p_r^2+V_s(r), \quad V_s(r)=\frac{s^2}{2r^2}.
\]

Therefore, the \emph{effective potential} $V_s(r)$ is  $C^{\infty}$-smooth on $(0,\ell)\cup (\ell,\infty)$ and it is of the class $C^1$ on $(0,\infty)$  (see Figure \ref{fig:efektivni}). For the second derivatives at $\ell$, one gets
\[
\lim_{r\to \ell-}V''_s(r)=3\frac{s^2}{\ell^4}, \qquad \lim_{r\to \ell+}V''_s(r)=3\frac{s^2}{\ell^4}+\sigma.
\]

The reduced system was  described in detail in \cite{Ar}.
 Since $V''_s(r)>0$ for $r\ne \ell$, $\lim_{r\to\infty} V_s(r)=\infty$ and
$\lim_{r\to 0} V_s(r)=\infty$, the effective potential $V_s(r)$ is strictly convex and it has a unique minimum $r^*_s$. The  trajectories in $T^*\R_+$ are the cycles
 of the form $\gamma_h=\{H_s=h\}$
for every $h>V_s(r^*_s)$ and the equilibrium
$\{(r^*_s,0)\}$  (see  Figure \ref{fig:efektivni}).
Note that $r^*_s$ tends to $\ell$ and $V_s(r^*_s)$ tends to zero, as $s$ tends to zero.

Define $\pi_r\colon T^*\R_+\to \R_+$, $\pi_r(r,p_r)=r$.
Then,  $\pi_r(\gamma_h)=[r_{\min,s},r_{\max,s}]$ is the projection of the cycle $\gamma_h$.
The values $r_{\min,s}$ and $r_{\max,s}$ correspond to the points on $\gamma_h$ where the kinetic energy is equal to zero.

We have the following two cases: (i) If $V_s(\ell)={s^2}/({2\ell^2})>h$, then  $r_{\min,s}>\ell$. In this case  $r_{\min,s}$ and $r_{\max,s}$ are the solutions of the  equation
\begin{equation}\label{rMinMax}
\frac12\big(\frac{s^2}{r^2}+\sigma r^2-2\sigma \ell r+\sigma {\ell^2}\big)=h,
\end{equation}
for $r>0$.  (ii) If $V_s(\ell)={s^2}/({2\ell^2})\le h$, then  $r_{\min,s}\le \ell$. It is the solution of the equation ${s^2}/{2r^2}= h$ for $r>0$:
\[
r_{\min,s}=\frac{\vert s\vert}{\sqrt{2h}},
\]
while $r_{\max,s}$ is the unique solution
of the equation \eqref{rMinMax} greater than $\ell$.  In particular, for
$V_s(\ell)={s^2}/({2\ell^2})=h$, we have $r_{\min,s}=\ell$.

The period of all the trajectories that belong to $\gamma_h$ is
\begin{equation}\label{period}
T_{h,s}=2\int_{r_{\min,s}}^{r_{\max_s}} \frac{dr}{\sqrt{2(h-V_s(r))}}.
\end{equation}

Each of those trajectories in the original configuration space $\R^2\{x_1,x_2\}$ belongs to the annulus between  the circles of radii $r_{\min,s}$ and $r_{\max,s}$, centered at the origin $O(0,0)$  (see Figure \ref{fig:prsten}). Note that in the case (i), $r_{\min,s}>\ell$ and the trajectories do not intersect $\Delta$.

Since $s=r^2\dot\varphi$, during the period $T_{h,s}$, the angle coordinate increases for
\begin{equation}\label{prirastaj}
\Delta\varphi_{h,s}=\varphi(T_{h,s})-\varphi(0)=\int_0^{T_{h,s}} \frac{s}{r^2(t)}dt=2\int_{r_{\min,s}}^{r_{\max,s}} \frac{s\, dr}{r^2 \sqrt{2(h-V_s(r))}}.
\end{equation}
 In Figure \ref{fig:prsten}, the angle $\Delta\varphi_{h,s}$ is equal to the angle $\angle \mathbf x_0 O \mathbf x_2$.

For $S=0$, the trajectories are either equilibria ($x\in D$, $p=0$) or periodic motion along the lines through the origin. For example, consider the invariant plane $\{p_2=0, x_2=0\}$  and a periodic motion $(x^0_1(t),p^0_1(t))$ with the energy $H=h>0$. Then $x_1^0(t)\in [x_{1,\min},x_{1,\max}]$, where
\[
x_{1,\max}=\sqrt{2\frac{h}{\sigma}}+\ell, \qquad x_{1,\min}=-x_{1,\max},
\]
 and the period is:
\[
T_h=2\int_{x_{1,\min}}^{x_{1,\max}} \frac{dx_1}{\sqrt{2h-\sigma (\vert x_1\vert-\ell)^2\theta(\vert x_1\vert-\ell)}}.
\]

All other trajectories, up to a time translation, with the zero value of the area first integral $S=0$ and the same energy $H=h$ are given by rotations  $\mathbf R_\vartheta$ about the origin
of the given periodic motion, for an arbitrary angle $\vartheta$:
\begin{align}\label{eq:rot}
& x_1(t)= \cos\vartheta\, x_1^0(t), \qquad   x_2(t)=\sin\vartheta\, x_1^0(t), \\
& p_1(t)= \cos\vartheta\, p_1^0(t), \qquad\, p_2(t)=\sin\vartheta\, p_1^0(t).
\end{align}

The Hamiltonian vector field is of the class $C^0$ at $\Delta$.
From the structure of the equations \eqref{ham1} and \eqref{ham2}, we see that for a given trajectory $(x(t),p(t))$ of the Hamiltonian system,
the function $x(t)$ is of the class $C^2(\R)$ and $p(t)=\dot x(t)$ is of the class $C^1(\R)$  at $\Delta$. The functions $x(t)$ that are tangent to $\partial D$, corresponding to the case $r_{\min,s}=\ell$, or entirely
belong to the region $\vert x\vert >\ell$, when $r_{\min,s}>\ell$, are  $C^{\infty}$-smooth.

\subsection{The return maps}\label{bil}
Next, we describe the trajectories such that $x(t)$ has two regimes, a free motion within the disk $D$  (see \eqref{eq:disk}), and a motion under the influence of the central force field $\mathbf F$ outside the disk $D$. We will associate to them return maps on the hypersurfaces  $\Delta_+$ and $\Delta_-$, that are defined in \eqref{Delta+} and \eqref{Delta-}.

Consider a trajectory $(x(t),p(t))$, such that $S=s\ne 0$, $x(0)= \mathbf x_0\in\partial D$ and $p(0)= \mathbf p_0$ is directed outside the disk $D$, thus $r(0)=\ell$, and $p_r(0)>0$. The energy $h$ can be calculated at the initial moment $t=0$
\begin{equation}\label{pocetnaEnergija}
h=H(\mathbf x_0,  \mathbf p_0)=H_s(r(0),p_r(0))=\frac12 p_r(0)^2+\frac{s^2}{2{\ell^2}}.
\end{equation}
After the time $\tau_0$,
\[
\tau_0=\int_0^{\tau_0} dt= \int_\ell^{r_{\max}} \frac{dr}{\sqrt{2(h-V_s(r))}},
\]
the trajectory reaches the maximal radius $r_{\max}$, that is the unique solution of the equation \eqref{rMinMax}
for $r>\ell$. For $\tau=2\tau_0$, $\mathbf x_1=x(\tau)$ again belongs to the circle $\partial D$, with the velocity $ \mathbf p_1=p(\tau)$ directed within the disk,
such that $\vert \mathbf p_1\vert =\vert \mathbf p_0\vert$ and $p_r(\tau)=\dot r(\tau)=-\dot r(0)=-p_r(0)$. As in \eqref{prirastaj}, we get
\[
\varphi(\tau_0)-\varphi(0)=\int_0^{\tau_0} \frac{s}{r^2(t)}dt.
\]

Thus, for $\tau=2\tau_0$, $\mathbf x_1$ is obtained from $ \mathbf x_0$ by  the rotation around the origin for the angle:
\begin{equation}\label{theta}
\vartheta=2(\varphi(\tau_0)-\varphi(0))=2\int_\ell^{r_{\max,s}} \frac{s\, dr}{r^2 \sqrt{2(h-V_s(r))}}.
\end{equation}

As a result, we obtain that
\begin{equation}\label{refleksija}
( \mathbf x_1, \mathbf p_1)=\big(\mathbf R_\vartheta( \mathbf x_0),\mathbf S_{(\mathbf R_\vartheta( \mathbf x_0))}
\circ \mathbf R_\vartheta( \mathbf p_0)),
\end{equation}
where $\mathbf R_\vartheta$ (see \eqref{eq:rot}) is the rotation about the origin for the angle $\vartheta$ given by \eqref{theta}
 (see Figure \ref{fig:prsten}, the angle \eqref{theta} is equal to the angle $\angle \mathbf x_0 O \mathbf x_1$).
 Here $\mathbf S_x$ is the reflection at $x\in\partial D$ with respect to the tangent to the circle $\partial D$, given by the formula:
\[
\mathbf S_x(p)=p-2\frac{\langle x,p\rangle }{\langle x,x\rangle}x.
\]

Let $\Sigma_-\subset T^*\R^2$ be the open region consisting of the above trajectories, including the trajectories with $S=0$ and $H=h>0$.
%that is the union of all trajectories
%with the initial conditions $x(0)$ belonging to the interior of the disk $D$ and $p(0)\ne 0$.
It is defined by:
\begin{equation}\label{sigma-}
 \Sigma_-=\{(x,p)\in T^*\R^2\,\vert\,  {\vert s\vert}<\ell{\sqrt{2h}}\}.
\end{equation}

Note that $\Delta \cap \Sigma_-=\Delta_+ \cup \Delta_-$ (see \eqref{Delta+} and \eqref{Delta-}) and the Hamiltonian flow induces the mappings
\begin{align}
\label{Psi} & \Psi\colon  \Delta_+ \to \Delta_-, \\
\label{Phi} & \Phi\colon  \Delta_- \to \Delta_+.
\end{align}
For $S=s\ne 0$, $\Psi$ is defined using equation \eqref{refleksija}
\[
(\mathbf x_0, \mathbf p_0)\longmapsto (\mathbf x_1,\mathbf p_1)=\Psi(\mathbf x_0, \mathbf p_0),
\]
while $\Phi$ is given by
\begin{align}
\label{PhiC1}  & (\mathbf x_1, \mathbf p_1)\longmapsto (\mathbf x_2,\mathbf p_2)=\Phi(\mathbf x_1, \mathbf p_1), \\
\nonumber &    (\mathbf x_2,\mathbf p_2)=(\mathbf x_1+ \tau_1 \mathbf p_1,p\mathbf p_1),  \qquad
 \tau_1=-{2\langle \mathbf x_1,\mathbf p_1\rangle }/{\langle \mathbf p_1,\mathbf p_1\rangle }.
\end{align}
The time $\tau_1$ is determined from the condition that $ \mathbf x_2=\mathbf x_1+\tau_1 \mathbf p_1$ belongs to the circle $\partial D$  (see Figure \ref{fig:prsten}).

In addition, for $S=0$, we  have:
\begin{align*}
&\Psi(x,p)=(x,-p), \quad  p=\lambda x, \quad \lambda>0, \\
&\Phi(x,p)=(-x,p), \quad  p=\lambda x, \quad \lambda<0.
\end{align*}

As a result, we introduce  $C^\infty$-smooth return maps,
\[
\Theta_+=\Phi\circ\Psi\colon \Delta_+ \to \Delta_+, \qquad \Theta_-=\Psi\circ\Phi\colon \Delta_- \to \Delta_-.
\]
The map $\Theta_+$ is an analogous to \emph{the first return map} in \cite{dBT}.

 Note that we could have defined the return maps directly, as the rotations by the angle $\Delta\varphi_{h,s}$ for $S=s\ne 0$ (see \eqref{prirastaj} and Figure \ref{fig:prsten}).
 The restriction of the Hamiltonian $H$ to $\Delta$ is the kinetic energy. Along with $H$ and $S$,  as rotations, the return maps $\Theta_\pm$ also preserve the scalar product $J(x,p)=\langle x,p\rangle$, which is not a first integral of the continuous system. We have $S^2+J^2=2{\ell^2}\langle p,p\rangle$.

Consider the isoenergetic level sets of the Hamiltonian $H$
\begin{equation}\label{fiksiranaE}
M_{h}^+=\Big\{H=\frac12\langle p,p\rangle=h\Big\}\cap \Delta_+, \qquad M_h^-=\Big\{H=\frac12\langle p,p\rangle=h\Big\}\cap \Delta_-,
\end{equation}
diffeomorphic to the cylinder $S^1\times (0,1)$.  Let $\Theta_\pm^h$ be the restriction of $\Theta_\pm$ to $M_h^\pm$.
The canonical symplectic form $\omega=dp_1\wedge dx_1+dp_2\wedge dx_2$ induces
the volume forms $\Omega_+$ and $\Omega_-$ on $M_h^+$ and $M_h^-$, respectively, that are preserved by the mappings $\Theta_\pm^h$.

\begin{thm}\label{th:billiard}
The return maps $\Theta_\pm^h\colon M_h^\pm\to M_h^\pm$ are integrable.  For the value of the integral $S=s\ne 0$, the return maps are the rotations by the angle  $\Delta\varphi_{h,s}$ given by \eqref{prirastaj}. For $S=0$, they are equal to the involution $(x,p)\mapsto (-x,-p)$.
\end{thm}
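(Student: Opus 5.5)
We compute the two half-maps \eqref{Psi} and \eqref{Phi} explicitly and show that their composition is a rotation of $(x,p)$. Throughout we write $(\mathbf x,\mathbf p)$ for a point of $\Delta$ and use the rotation $\mathbf R_\alpha$ acting on both $x$ and $p$, as in \eqref{eq:rot}. The statement has three parts: the rotation formula for $s\neq0$, the involution for $s=0$, and integrability. The rotation formula is the substantive one.

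\textbf{Case $s\neq 0$.} By \eqref{refleksija}, $\Psi(\mathbf x_0,\mathbf p_0)=(\mathbf R_\vartheta\mathbf x_0,\mathbf S_{\mathbf R_\vartheta\mathbf x_0}\mathbf R_\vartheta\mathbf p_0)$. For $\Phi$, we use that free motion inside $D$ is a straight chord. For a chord of a circle, the exit velocity equals the reflection of the entry velocity about the tangent line at the midpoint of the chord. Hence the exit point and exit velocity are obtained from the entry data by the rotation $\mathbf R_\beta$, composed with a reflection $\mathbf S$ at the new point.

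To verify this, write $\mathbf x_1=\ell(\cos\varphi_1,\sin\varphi_1)$ and decompose $\mathbf p_1$ into radial and tangential parts $p_r<0$ and $p_\varphi/\ell=s/\ell$. Then $\mathbf x_2$ is $\mathbf x_1$ rotated by the chord angle $\beta=\pi-2\arcsin(|s|/(\ell|\mathbf p_1|))$, signed by $s$. The velocity at $\mathbf x_2$ has the same tangential component $s/\ell$ and radial component $-p_r>0$, by conservation of $S$ and $|p|$ together with $\langle \mathbf x_2,\mathbf p_1\rangle=-\langle \mathbf x_1,\mathbf p_1\rangle$. So $\Phi=\mathbf S\circ\mathbf R_\beta$ in the same sense.

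Since each of $\Psi,\Phi$ is ``rotate, then flip the sign of the radial component'', the two radial flips cancel. Therefore $\Theta_+=\Phi\circ\Psi=\mathbf R_{\vartheta+\beta}$ acting on $(x,p)$. The reflection commutes with rotations in the appropriate sense, namely $\mathbf S_{\mathbf R x}\mathbf R=\mathbf R\mathbf S_x$.

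It remains to identify $\vartheta+\beta$ with $\Delta\varphi_{h,s}$ from \eqref{prirastaj}. Split the integral over $[r_{\min,s},r_{\max,s}]$ into $[\ell,r_{\max,s}]$, which gives $\vartheta$ by \eqref{theta}, and $[r_{\min,s},\ell]$, where $V_s=s^2/2r^2$. The second piece integrates explicitly:
\[
2\int_{r_{\min}}^{\ell}\frac{s\,dr}{r^2\sqrt{2h-s^2/r^2}}=2\arccos\frac{|s|}{\ell\sqrt{2h}}\ \mathrm{sign}(s).
\]
This equals $\beta$, because $|\mathbf p_1|=\sqrt{2h}$ and $\pi-2\arcsin u=2\arccos u$. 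This matching of the explicit free-motion angle with the chord angle is the only computational check. I expect the main obstacle to be keeping the sign and orientation conventions consistent, as opposed to any conceptual difficulty. Alternatively, one can argue directly, as the remark before the theorem suggests: the full orbit from $\Delta_+$ back to $\Delta_+$ is one period of the reduced cycle $\gamma_h$. After one period $r,p_r$ return to their values and $\varphi$ advances by $\Delta\varphi_{h,s}$. In polar canonical coordinates \eqref{polarne}, this is exactly the rotation $\mathbf R_{\Delta\varphi_{h,s}}$ on $(x,p)$. $\Theta_-$ is handled identically, being conjugate to $\Theta_+$ via $\Psi$.

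\textbf{Case $s=0$.} Using the formulas for $\Psi,\Phi$ at $S=0$, we get $(x,p)\mapsto(x,-p)\mapsto(-x,-p)$ for $\Theta_+$, and similarly for $\Theta_-$. This is the stated involution, which is also the rotation by $\pi$, consistent with the limit $\Delta\varphi_{h,s}\to\pi$ as $s\to0$.

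\textbf{Integrability.} On the two-dimensional cylinder $M_h^\pm$, the function $S$ (equivalently $J$, since $S^2+J^2=2\ell^2\cdot 2h$) is invariant under $\Theta^h_\pm$, because rotations preserve $S$. Its level sets are the invariant curves, and they are circles: fixing $|p|$ and $\langle x,p\rangle$ determines $p$ up to the rotation of $x$ around $\partial D$. On each such circle, parametrized by the angle of $\mathbf x$, the map acts as the rigid rotation by $\Delta\varphi_{h,s}$, which depends smoothly on $s$. Together with preservation of $\Omega_\pm$, this is the discrete Liouville picture, so $\Theta^h_\pm$ is integrable.
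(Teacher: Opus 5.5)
Your proof is correct, but your main route differs from the paper's. The paper gives no separate argument. It records $\Psi$ via \eqref{refleksija}, $\Phi$ via \eqref{PhiC1} and the $S=0$ formulas, and then simply observes that the return maps are the rotations by $\Delta\varphi_{h,s}$. The reason is that the first return to $\Delta_+$ is one full period $T_{h,s}$ of the reduced cycle $\gamma_h$: $(r,p_r,p_\varphi)$ come back while $\varphi$ advances by $\Delta\varphi_{h,s}$, which by \eqref{polarne} is the rotation. The circles $\delta^\pm_{h,s}$ of Remark~\ref{S=s} are then the invariant curves. That is exactly your ``alternative'' argument.

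Your primary argument is different. You write both half-maps as $\mathbf R_\alpha\circ\hat{\mathbf S}$ with $\hat{\mathbf S}(x,p)=(x,\mathbf S_x p)$. Since $\hat{\mathbf S}$ is an $SO(2)$-equivariant involution preserving $H$ and $S$, the two radial flips cancel. You then match the chord angle with the interior part of \eqref{prirastaj} via the explicit value $2\,\mathrm{sign}(s)\arccos\frac{|s|}{\ell\sqrt{2h}}$.

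This costs more sign bookkeeping than the one-line period argument. In return it gives a structural reason for the rotation and handles $\Theta_-$ at the same stroke. It also makes the $S=0$ involution visible as a limit: $\vartheta\to0$ and $\beta\to\pm\pi$ according to the sign of $s$. So your ``$\Delta\varphi_{h,s}\to\pi$'' should be read modulo $2\pi$, which is all that matters.

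One small slip: on $\Delta$ one has $S^2+J^2=\vert x\vert^2\vert p\vert^2=\ell^2\langle p,p\rangle=2\ell^2h$, not $4\ell^2h$. The factor $2$ in the formula $S^2+J^2=2\ell^2\langle p,p\rangle$ of Section~\ref{sec2} is itself a misprint; Section~\ref{sec4} has the correct $2\ell^2H$. This does not affect your argument, since you only use that $J$ is determined by $S$ on $M_h^\pm$.
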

\begin{figure}[h]
{\centering{\includegraphics[width=7.5cm]{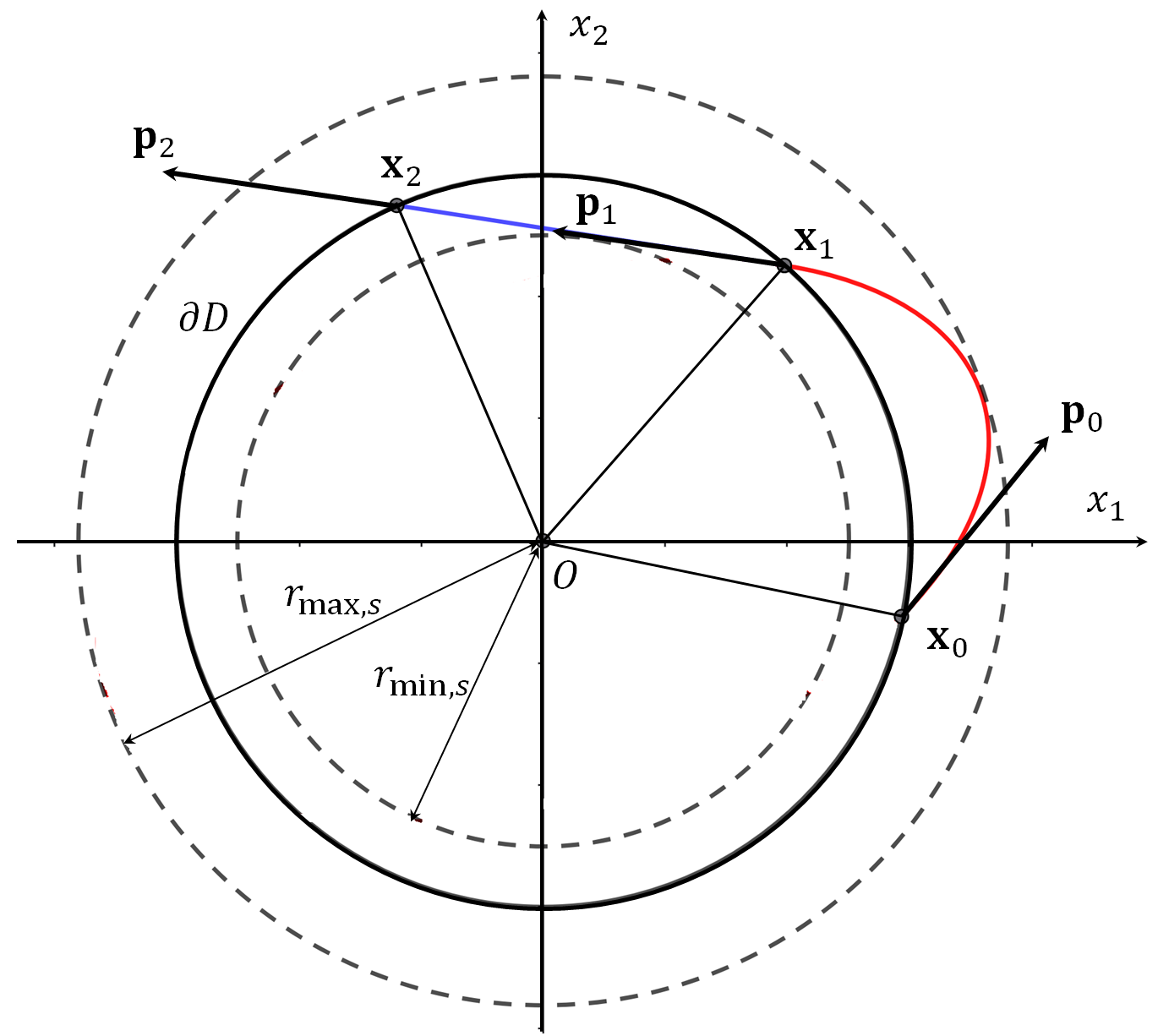}}
\caption{The return map $\Theta_+\colon  (\mathbf x_0, \mathbf p_0)  \longmapsto (\mathbf x_1,\mathbf p_1)\longmapsto(\mathbf x_2, \mathbf p_2)$. \label{fig:prsten}
}}\end{figure}
\begin{rem}\label{S=s}
At $M_h^\pm$, we have that $S=x_1p_2-x_2p_1=\cos(\beta)\ell \vert p\vert=\cos(\beta)\ell \sqrt{2h}$, where $\beta$ is the angle between the outgoing for $M_h^+$ and ingoing for $M_h^-$ velocity $p$ and the circle $\partial D$ with respect to the standard orientation $dx_1\wedge dx_2$. The sets $M_h^\pm$ are foliated on the circles
\[
\delta_{h,s}^+=M_h^+ \cap \{S=s\}, \qquad \delta_{h,s}^-=M_h^- \cap \{S=s\}
\]
and the dynamics is simply the rotation  by the angle $\Delta\varphi_{h,s}$ on $\delta^\pm_{h,s}$, where $\Delta\varphi_{h,0}:=\pi$. The boundaries of $M_h^\pm$  belong to $\Delta_0$ and they correspond to the angles $\beta=0$ and $\beta=\pi$.
\end{rem}

\subsection{$C^1$-integrability in the entire phase space}
Although the Hamiltonian $H$ from \eqref{eq:ham} is of the class $C^1$, from the above analysis it follows that the Liouville-Arnol'd theorem still can be applied.
The procedure is quite standard, as described in e.g. \cite{Ar}, with a caveat that the  action-angle coordinates obtained here are given by  a $C^1$-diffeomorphism,  which is $C^\infty$-smooth outside \eqref{Delta}.

Let $\Sigma_+\subset T^*\R^2$ be an open region consisting of trajectories $(x(t),p(t))$, such that
$x(t)$  does not intersect $\partial D$: $\vert x(t)\vert>\ell$, $t\in\R$.
It is described by the inequality:
\begin{equation*}\label{sigma+}
\Sigma_+=\{(x,p)\in T^*\R^2\,\vert\,  {\vert s\vert}>\ell{\sqrt{2h}}\}.
\end{equation*}
It is clear that $\Sigma_+$ is an invariant manifold of our system and that within $\Sigma_+$ we have the usual $C^\infty$-smooth Liouville-Arnol'd integrability.

Let $\Sigma_0=\{H=0\}=\{(x,0) \, \vert\, x\in D\}$ be the set of all equilibrium points and let
\begin{equation*}\label{sigma}
\Sigma=\{(x,p)\in T^*\R^2\,\vert\,  {\vert s\vert}=\ell{\sqrt{2h}}, h>0\},
\end{equation*}
be the collection of all the trajectories  that are tangent to the boundary circle $\partial D$.
Note that
$\Sigma_0$ and $\Sigma$ are $2$ and $3$-dimensional invariant varieties of  the system.
Thus, we obtain  a decomposition of the phase space  (see  \eqref{sigma-}), invariant with respect to the Hamiltonian flow of $X_H$,
 with  the $C^1$-Hamiltonian $H$ given by \eqref{eq:ham}:
\begin{equation}\label{razbijanje}
T^*\R^{2}=\Sigma_0\cup\Sigma_- \cup \Sigma \cup \Sigma_+.
\end{equation}

The regular invariant tori of the Hamiltonian system are given with $s\ne 0$ and $h>0$ such that $h>V_s(r^*_s)$, where $r^*_s$ is the minimum of the effective potential $V_s$, by the equations $H=h>0$ and $S=s\ne 0$.
Denote the set of those regular values $(h,s)$ by ${\mathcal Reg}$.

Firstly, following \cite{Ar}, we describe the action-angle coordinates for the one-dimensional reduced system $(T^*\R_+, H_s)$, $s\ne 0$.
The action variable is defined by:
\begin{equation}\label{I}
I(H_s)\vert_{H_s=h}=\frac1{2\pi} \int_{\gamma_h} p_r dr=\frac{1}{\pi}\int_{r_{\min,s}}^{r_{\max,s}} \sqrt{2(h-V_s(r))}dr,
\end{equation}
where, as above, $\gamma_h$ is the cycle given by the equation $H_s(p_r,r)=h>V_s(r^*_s)$, where, again, $r^*_s$ is the minimum of the function $V_s(r)$.
The value $I(H_s)\vert_{H_s=h}$ is the area of the region within $\gamma_h$ divided by $2\pi$.

Consider the ray $\Gamma_s=\{(r^*_s,p_r)\, \, p_r>0\}\subset T^*\R_+$.
The angle variable $\psi\, (\mod\,2\pi)$  on $T^*\R_+\setminus \{(r^*_s,0)\}$, which satisfies
\[
dp_r \wedge dr=dI \wedge d\psi,
\]
with the condition $\psi\vert_{\Gamma_s}=0\,(\mod\,2\pi)$, is geometrically defined by
\begin{equation}\label{ugao}
\psi(r,p_r)=\lim_{\Delta I\to 0}\frac{\int_{\gamma'_{\hat h}} p_r dr-\int_{\gamma'_{h}} p_r dr}{\Delta I}=
\lim_{\Delta I\to 0}\frac{\int_{\Pi_{\hat h,r,h}}dp_r\wedge dr}{\Delta I}, \quad (r,p_r)\in\gamma_h.
\end{equation}
Here $h=H(r,p_r)$;  the value of the energy $\hat h$ is chosen such that
the difference of the action variables $I(H_s)\vert_{H_s=\hat h}-I(H_s)\vert_{H_s=h}$, which is the area of the region between the curves
$\gamma_h=\{H=h\}$ and $\gamma_{\hat h}=\{H=\hat h\}$  divided by $2\pi$, is equal to $\Delta I$;
$\gamma_h'$ is a part of $\gamma_h$,  starting at $\Gamma_s$ and finishing at $(r,p_r)$; $\gamma_{\hat h}'$ is a part of $\gamma_{\hat h}$, starting at $\Gamma_s$ and finishing at $(r,\hat p_r)$; and
$\Pi_{\hat h,r,h}$ is the area of the  region bounded by: $\gamma_{\hat h}'$, the line that projects to $r$, $\gamma'_h$, and $\Gamma_s$ (see Figure \ref{fig:efektivni}).
\begin{figure}[h]
{\centering{\includegraphics[width=10.5cm]{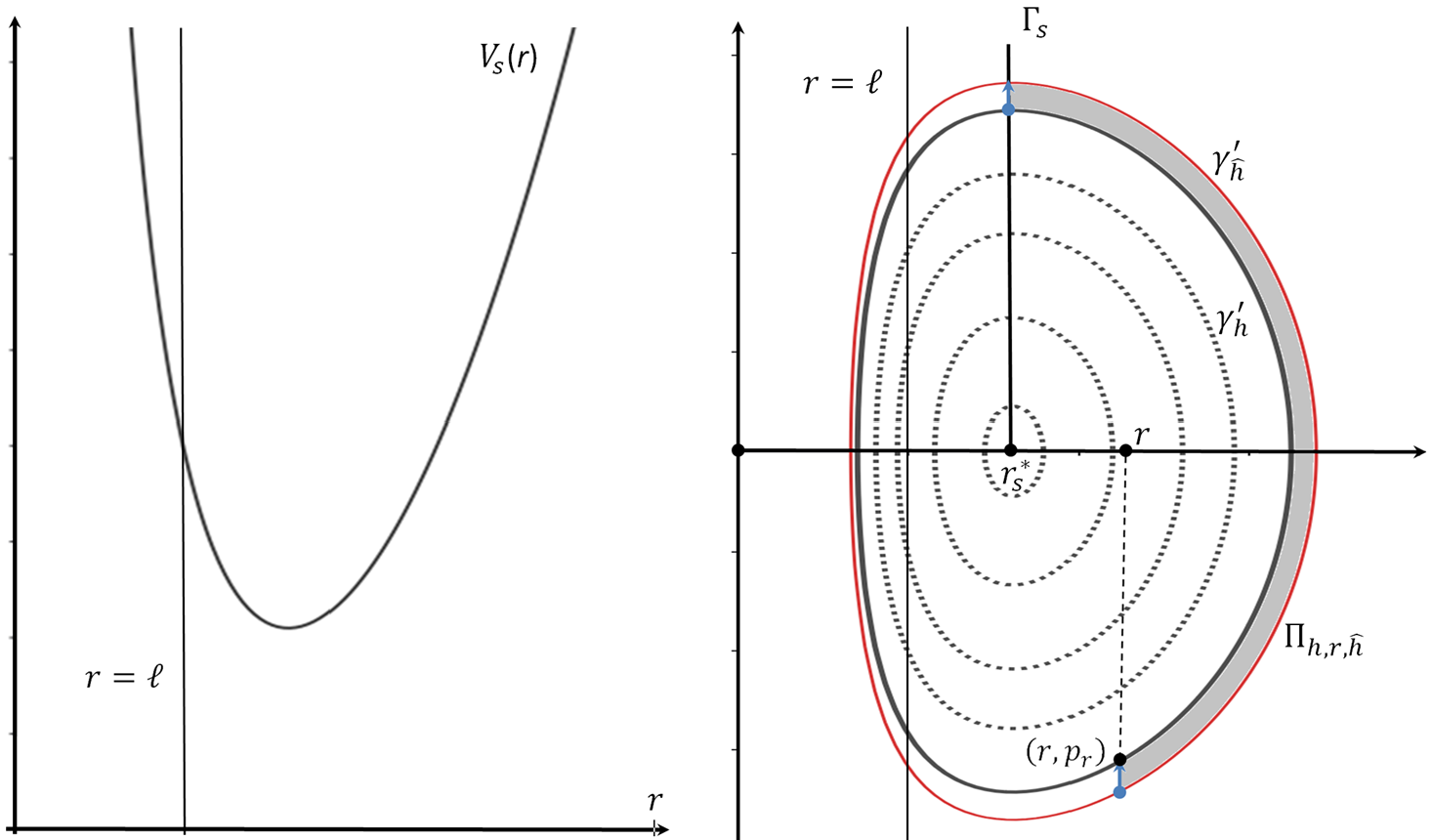}}
\caption{The effective potential and the phase of the reduced one-dimensional system. \label{fig:efektivni}
}}\end{figure}
The function $I$ is $C^{\infty}$-smooth  and monotonic in the variable $h=H_s$. We can express $H_s$ as a $C^\infty$-smooth monotonic function of $I$ as well.
The map
\[
I=I(H_s)=I(r,p_r), \quad \psi=\psi(r,p_r)\, \mod \, 2\pi,
\]
is a  $C^1$--diffeomophism between $(T^*\R_+\setminus \{(r^*_s,0)\})\{r,p_r\}$ and $S^1\times \R_+\{\psi\,(\mod\,2\pi),I\}$.
The Hamiltonian dynamics in the new variables $(\psi, I)$ is  $C^\infty$-smooth:
\[
\dot I=0, \qquad \dot\psi\vert_{I=I(h)}=\Omega(I)\vert_{I=I(h)}=\frac{\partial H_s}{\partial I}\vert_{I=I(h)}=\frac{1}{{\partial I}/{\partial H_s}\vert_{H_s=h}}=\frac{2\pi}{T_{h,s}},
\]
where $T_{h,s}$ is the period  of  a trajectory that belongs  to the cycle $\gamma_h$.  Here we use the fact that the derivative of \eqref{I} with respect to the energy $h$ is exactly the period \eqref{period} divided by $2\pi$ (see \cite{Ar}).

Now, consider the original phase space $T^*\R^2$, a singular Lagrangian toric fibration
\[
\mathbf F\colon T^*\R^{2}\to \R^2, \qquad \mathbf F=(H,S),
\]
and a regular invariant Lagrangian torus of the non-reduced Hamiltonian system
\[
\mathbb T^2_{h_0,s_0}=\mathbf F^{-1}(h_0,s_0), \qquad (h_0,s_0)\in{\mathcal Reg}.
\]

Let $U=(h_0-\varepsilon_1,h_0+\varepsilon_1)\times (s_0-\varepsilon_2,s_0+\varepsilon_2)\subset {\mathcal Reg}$. In a toroidal neighborhood $\mathcal U=\mathbf F^{-1}(U)$ of
$\mathbb T^2_{h_0,s_0}$ we take the standard action-angle variables $(I_1,I_2,\psi_1,\psi_2)$,
\begin{align*}
& I_1=I_1(p_r,p_\varphi,r_r), \quad I_2=p_\varphi, \quad \psi_j=\psi_j(p_r,p_\varphi,r_r)\, \mod\,2\pi, \quad j=1,2,\\
& \omega=dp_1\wedge dx_1+dp_2\wedge dx_2=dp_r\wedge dr+dp_\varphi\wedge d\varphi=dI_1\wedge d\psi_1+dI_2\wedge d\psi_2.
\end{align*}

 It is clear that $\mathbb T^2_{h_0,s_0} \subset \Sigma_- \cup \Sigma \cup \Sigma_+$.
 The construction is independent of the invariant decomposition of the phase space \eqref{razbijanje}. Only, for $\mathbb T^2_{h_0,s_0} \subset \Sigma_+$
 all considered objects are $C^\infty$-smooth.
The action variables are defined (see \cite{Ar}) by
\[
I_j=I_j(H,S)\vert_{\mathbb T^2_{h,s}}=\frac{1}{2\pi}\int_{\gamma_j} p_r dr + p_\varphi d\varphi, \qquad j=1,2,
\]
where $\gamma_1$ and $\gamma_2$ form a pair of basal cycles on $\mathbb T^2_{h,s}$. For example,
we can take $\gamma_1$ to be the intersection of $\mathbb T^2_{h,s}$ with the $3$-dimensional space $\{\varphi=0\,(\mod\,2\pi)\}$
and $\gamma_2$ to be one of the two components of the intersection of $\mathbb T^2_{h,s}$ with the $3$-dimensional space $\{r=r^*_{s_0}\}$, e.g. we can take those with $p_r$ coordinate greater than  zero.  There exists a natural identification of $\gamma_h$ with $\gamma_1$. We have
\begin{align*}
& I_1(H,S)\vert_{\mathbb T^2_{h,s}}=\frac{1}{\pi}\int_{r_{\min,s}}^{r_{\max,s}} \sqrt{2(h-V_s(r))}dr=I(H_s)\vert_{H_s=h}, \\
& I_2(H,S)\vert_{\mathbb T^2_{h,s}}=\frac{1}{2\pi}\int_{\gamma_2} s  d\varphi=s.
\end{align*}

We take $\psi_1\vert_{\gamma_2}=0\,(\mod\,2\pi)$ and $\psi_2\vert_{\gamma_1}=0\,(\mod\,2\pi)$.
 Let $\pi(r,\varphi,p_r,p_\varphi)=(r,\varphi)$ be the projection to the configuration space.
%Further, we cut the cycles $\gamma_1$ and $\gamma_2$ from $\mathbb T^2_{h,s}$ and denote the obtained rectangular by $\mathbb T^2_{h,s}^\Box$.
For the angle coordinates we use
the generating function
\[
\mathcal S(I_1,I_2,\pi(\mathbf b))\vert_{I_1=I_1(h,s), I_2=I_2(h,s)}:=\int_{\gamma'_{h,s}}p_r dr+p_\varphi d\varphi,
\]
where   $\mathbf b$ is an arbitrary point in $\mathbb T^2_{h,s}$, $\gamma'_{h,s}\subset \mathbb T^2_{h,s}$ is an arbitrary  curve connecting the points $ \mathbf a=\gamma_1\cap \gamma_2$ and $\mathbf b$ that is homologically trivial.
Then (see \cite{Ar})
\[
\psi_1=\psi_1(r,\varphi,p_r,p_\varphi)=\frac{\partial\mathcal S}{\partial I_1}, \qquad \psi_2=\psi_2(r,\varphi,p_r,p_\varphi)=\frac{\partial\mathcal S}{\partial I_2}.
\]

By definition, the angle $\psi_1$ is equal to the angle coordinate $\psi$, with the ray $\Gamma_s$ in the definition \eqref{ugao} replaced by the fixed ray $\Gamma_{s_0}$ for all $s\in (s_0-\varepsilon_2,s_0+\varepsilon_2)$.

Let $H=H(I_1,I_2)$ be the Hamiltonian function expressed using the action variables
and $J\colon (H,S)\mapsto (I_1,I_2)$ be the transformation defining the action variables near $\mathbb T^2_{h_0,s_0}$:
$I_1(H,S)\vert_{S=s}=I(H_s)$, $I_2=S$, Then,
\[
dJ=\begin{pmatrix}
\frac{\partial I_1}{\partial H} & \frac{\partial I_1}{\partial S}\\
0 & 1
\end{pmatrix}
\]
and the differential of the inverse map, for $I_1=I_1(h,s), I_2=s$, is given by
\[
dJ^{-1}\vert_{I_1=I_1(h,s), I_2=s}=
\begin{pmatrix}
\frac{\partial H}{\partial I_1} & \frac{\partial H}{\partial I_2}\\
 \frac{\partial S}{\partial I_1} & \frac{\partial S}{\partial I_2}
\end{pmatrix}\vert_{I_1=I_1(h,s), I_2=s}=\frac{1}{\frac{\partial I_1}{\partial H}}
\begin{pmatrix}
1 & -\frac{\partial I_1}{\partial S}\\
 0 & \frac{\partial I_1}{\partial H}
\end{pmatrix}\vert_{H=h, S=s}.
\]
Therefore,
\begin{align*}
\frac{\partial H}{\partial I_2}\vert_{I_1=I_1(h,s), I_2=s}=&
-\frac{\frac{\partial I_1}{\partial S}}{{\frac{\partial I_1}{\partial H}}}\vert_{I_1=I_1(h,s), I_2=s}=-
\frac{\frac{d}{ds}I(H_s)\vert_{H_s=h}}
{\frac{d}{dh}I(H_s)\vert_{H_s=h}}=\frac{\Delta\varphi_{h,s}}{T_{h,s}},
\end{align*}
where $T_{h,s}$ is the period \eqref{period} of a trajectory that belongs to $\gamma_h$ and $\Delta\varphi_{h,s}$ is the increase  \eqref{prirastaj} of the angle $\varphi$ of the corresponding trajectory in the original configuration space for the time $T_{h,s}$.
Thus, $\Delta\varphi_{h,s}/T_{h,s}$ is the average velocity $\dot\varphi$ along a motion with $H=h>0$, $S=s\ne 0$.

%and we get the action-angle variables $(I_1,I_2,\psi,\varphi)$ by using $C^1$-diffeomorphism

\begin{thm}\label{th:integrability}
The Hamiltonian system defined by the equations \eqref{ham1}, \eqref{ham2} is completely integrable.
The phase space $T^*\R^2$ is almost everywhere foliated on invariant tori $\mathbb T^2_{h,s}=\{H=h, S=s\}$, $(h,s)\in{\mathcal Reg}$.
In a toroidal neighborhood  of
$\mathbb T^2_{h_0,s_0}$ there exist
action-angle variables $(I_1,I_2,\psi_1,\psi_2)$ that linearize the Hamiltonian dynamics:
\begin{align*}
& \dot I_1=0,\qquad   \dot\psi_1=\Omega_1(I_1,I_2)\vert_{\mathbb T^2_{h,s}}=\frac{\partial H}{\partial I_1}\vert_{\mathbb T^2_{h,s}}=\frac{2\pi}{T_{h,s}},\\
& \dot I_2=0, \qquad  \dot\psi_2=\Omega_2(I_1,I_2)\vert_{\mathbb T^2_{h,s}}=\frac{\partial H}{\partial I_2}\vert_{\mathbb T^2_{h,s}}=\frac{\Delta\varphi_{h,s}}{T_{h,s}}.
\end{align*}
\end{thm}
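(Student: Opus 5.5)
The plan is to assemble the reduction and the action-angle construction carried out above into a full Liouville–Arnol'd statement, keeping track of regularity only where the construction touches $\Delta$.

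First, I will establish complete integrability. The functions $H$ and $S$ are $C^1$ on $T^*\R^2$ and commute. Indeed, $S$ generates rotations, and $H$ is $SO(2)$-invariant because the potential depends only on $\vert x\vert$, so $\{H,S\}=0$ wherever $H$ is $C^1$. They are independent on the set of regular values ${\mathcal Reg}$. In polar coordinates, $dS=dp_\varphi$ and $dH=p_r\,dp_r+\dots+V_s'(r)\,dr$, and these fail to be independent only where $p_r=0$ and $V_s'(r)=0$, that is, on the relative equilibria $r=r^*_s$. The same failure occurs on $S=0$ and on $\Sigma_0$.

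Second, I will show that the level sets are tori and that their union has full measure. The complement of $\bigcup_{(h,s)\in{\mathcal Reg}}\mathbb T^2_{h,s}$ is contained in $\Sigma_0\cup\{S=0\}\cup\{r=r^*_S,\,p_r=0\}$, which is a closed set of measure zero. For $(h,s)\in{\mathcal Reg}$, the reduced level set $\gamma_h$ is a single closed curve. This follows from the strict convexity of $V_s$, which is $C^1$ with $V_s''>0$ off $\ell$, together with $V_s\to\infty$ at both ends of $(0,\infty)$. Since $\varphi$ is cyclic, $\mathbb T^2_{h,s}\cong\gamma_h\times S^1$ is a compact connected torus.

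Third, I will build the action-angle coordinates. Take a toroidal neighbourhood $\mathcal U=\mathbf F^{-1}(U)$ and use the variables already constructed: $I_1(H,S)=I(H_s)$ from \eqref{I}, $I_2=S$, and $\psi_1,\psi_2$ from the generating function $\mathcal S$. I will verify three things.
\begin{itemize}
\item[(a)] $I_1$ is $C^\infty$ in $(h,s)$. The integrand $\sqrt{2(h-V_s(r))}$ vanishes only at the turning points, and those are simple roots lying off $r=\ell$, except in the tangency case, which is excluded by the fact that $U\subset{\mathcal Reg}$ is open. For $\Sigma$ one still has smoothness by splitting the integral at $\ell$, since $V_s$ is $C^1$ there.
\item[(b)] $\partial I_1/\partial h=T_{h,s}/2\pi\neq0$, so $(H,S)\mapsto(I_1,I_2)$ is a diffeomorphism onto its image.
\item[(c)] The map $(r,\varphi,p_r,p_\varphi)\mapsto(I_1,I_2,\psi_1,\psi_2)$ is symplectic. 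This holds because $\mathcal S$ is a generating function: its mixed partials with respect to $r,\varphi$ give back $p_r,p_\varphi$. The regularity of $\mathcal S$ is that of $p_r=\sqrt{2(h-V_s(r))}$ as a function of $(r,h,s)$, which is $C^1$ across $r=\ell$. The map is therefore a $C^1$-diffeomorphism, and it is $C^\infty$ off $\Delta$.
\end{itemize}

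Fourth, I will derive the equations of motion. In the new variables $H=H(I_1,I_2)$ is $C^\infty$, so Hamilton's equations give $\dot I_j=0$ and $\dot\psi_j=\partial H/\partial I_j$. The frequencies then follow from the computations already done: $\partial H/\partial I_1=1/(\partial I_1/\partial h)=2\pi/T_{h,s}$, and $\partial H/\partial I_2=-\partial_s I/\partial_h I$. For the latter I differentiate \eqref{I} under the integral sign in $s$. The endpoint terms vanish because the integrand is zero there, and $\partial_s V_s=s/r^2$ in both regions. This gives $-\partial_s I=\frac1\pi\int s\,dr/(r^2\sqrt{2(h-V_s)})=\Delta\varphi_{h,s}/2\pi$, and hence $\Delta\varphi_{h,s}/T_{h,s}$.

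The main obstacle is the regularity bookkeeping across $\Delta$. The Hamiltonian flow is only $C^1$ there, and I must show that $\psi_j$, defined via $\partial\mathcal S/\partial I_j$, is genuinely $C^1$ and that the formal identity $\omega=\sum dI_j\wedge d\psi_j$ holds there. I would handle this by computing $\psi_1$ explicitly as $\frac{2\pi}{T_{h,s}}\int dr/\sqrt{2(h-V_s)}$ (time from $\Gamma_{s_0}$), which is $C^1$ across $r=\ell$ since $V_s\in C^1$. I would then verify the symplectic identity on each side of $\Delta$ and extend it by continuity.
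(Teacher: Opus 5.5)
Your outline is the paper's own argument: $SO(2)$-reduction to the convex $C^1$ effective potential $V_s$, then $I_1=I(H_s)$ and $I_2=S$, angles from the generating function $\mathcal S$, and frequencies read off from $dJ^{-1}$. You also add something the paper omits. Differentiating \eqref{I} in $s$ (with $\partial_sV_s=s/r^2$ on both sides of $\ell$ and vanishing endpoint terms) gives $-\partial_sI=\Delta\varphi_{h,s}/2\pi$, which the paper only asserts. Your measure-zero bookkeeping is fine, with one correction: $dH\wedge dS\neq0$ on $\{S=0,\ p\neq0\}$. The set $S=0$ is discarded only because the polar reduction is unavailable there, not because independence fails.

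\textbf{The gap is in step (a).}
\begin{itemize}
\item \emph{Tangency values are not excluded.} Openness of ${\mathcal Reg}$ does not remove $s^2=2\ell^2h$. Since $V_s'(\ell)=-s^2/\ell^3<0$, you have $r^*_s>\ell$ and $h=V_s(\ell)>V_s(r^*_s)$. So these values lie inside ${\mathcal Reg}$, the tori in $\Sigma$ are covered by the theorem, and every neighbourhood $U$ of such a value straddles the curve.
\item \emph{Splitting at $\ell$ does not help.} Across this curve the turning point $r_{\min,s}$ passes through the kink of $V_s$.
\item \emph{$I_1$ is in fact only $C^2$ there.} Write $I_1=I^{\mathrm{out}}+\Delta I$, where $I^{\mathrm{out}}$ is the analytic action of $\frac{s^2}{2r^2}+\frac{\sigma}{2}(r-\ell)^2$ extended to $r<\ell$. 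Put $\epsilon=h-s^2/(2\ell^2)$. Then $\Delta I=0$ for $\epsilon\le0$. Substituting $u=V_s(r)$ near $r=\ell$ gives $\Delta I=\frac{4\sqrt2\,\sigma\ell^9}{15\pi s^6}\,\epsilon^{5/2}+O(\epsilon^{7/2})$ for $\epsilon>0$.
\item \emph{Consequences.} $I_1$ is $C^2$ but not $C^3$ on that curve. $T_{h,s}$, $\Omega_1$ and $\Omega_2$ are only $C^1$ there. Your claim ``$C^\infty$ off $\Delta$'' also fails along $\Sigma$.
\end{itemize}

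The paper makes the same $C^\infty$ claim for $I$ and for the $\Omega_i$, so this is a shared overstatement. The theorem itself survives. However, $C^2$ is exactly the regularity you need, and you should prove it instead of asserting $C^\infty$. The reason is that $\psi_2=\partial\mathcal S/\partial I_2$ contains the term $\Omega_2(H,S)$ times the travel time from $\Gamma_{s_0}$. So $\psi_2$ is $C^1$ only if $\Omega_2=-\partial_sI_1/\partial_hI_1$ is $C^1$, i.e. only if $I_1\in C^2$.

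For the same reason, your claim in (c) that $\mathcal S$ inherits the regularity of $p_r(r,h,s)$ is incomplete. $\mathcal S$ depends on $I$ through $h=H(I_1,I_2)$, and that composition is exactly where the tangency enters.
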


Thus, the Hamiltonian system \eqref{ham1}, \eqref{ham2}  is $C^1$-conjugate to a linear flow, and it is $C^\infty$-conjugate outside \eqref{Delta}.
Note that $\Omega_i$ are smooth functions in a neighborhood  of
$\mathbb T^2_{h_0,s_0}$.
  A dynamical meaning of the angle variables is that $\dot\psi_1$ is the average velocity of the $SO(2)$-reduced system along the cycle $\gamma_h$ and $\dot\psi_2$ is the average velocity of the corresponding reconstruction problem   $\dot\varphi=s/r^2(t)$ in the angular variable $\varphi$.

\begin{rem}
Let us note that in \cite{AX}, the Hamiltonians were of the class $C^2$, while the first integrals of motion were of the class $C^1$.
\end{rem}

\subsection{$C^1$-gluing of two integrable systems}

Geometrically, the construction presented in this section can be seen as a $C^1$-gluing of two integrable systems: the free motion in $\R^2$ with noncompact invariant manifolds and the motion under the influence of the elastic potential with compact invariant manifolds over the boundary $\Delta$.
The first system is non-commutatively integrable with a complete sets of integrals
\begin{equation}\label{eq:Fi}
F_1=p_1,\quad F_2=p_2,\quad S=x_1p_2-x_2p_1 \quad (H_1=\frac12(F_1^2+F_2^2)),
\end{equation}
and the second system is Liouville integrable with the integrals
\[
H_2(x,p)=\frac12 \langle p,p\rangle+\frac12\sigma (\vert x\vert-\ell)^2, \qquad S=x_1p_2-x_2p_1.
\]

Both systems have a common integral $S$, and their Hamiltonian functions coincide at $\Delta$: $H_1\vert_\Delta=H_2\vert_\Delta$.
As a result, we have a $C^1$-gluing of the cylinder $\{H_1=h, S=s\}$ and the torus $\{H_2=h, S=s\}$, resulting with the regular invariant torus $\mathbb T^2_{h,s}$
consisting  a part of the cylinder within the region $x_1^2+x_2^2\le 1$ and a part of the torus for $x_1^2+x_2^2\ge 1$  (see Fig. 2).

\begin{figure}[h]
{\centering{\includegraphics[width=11.5cm]{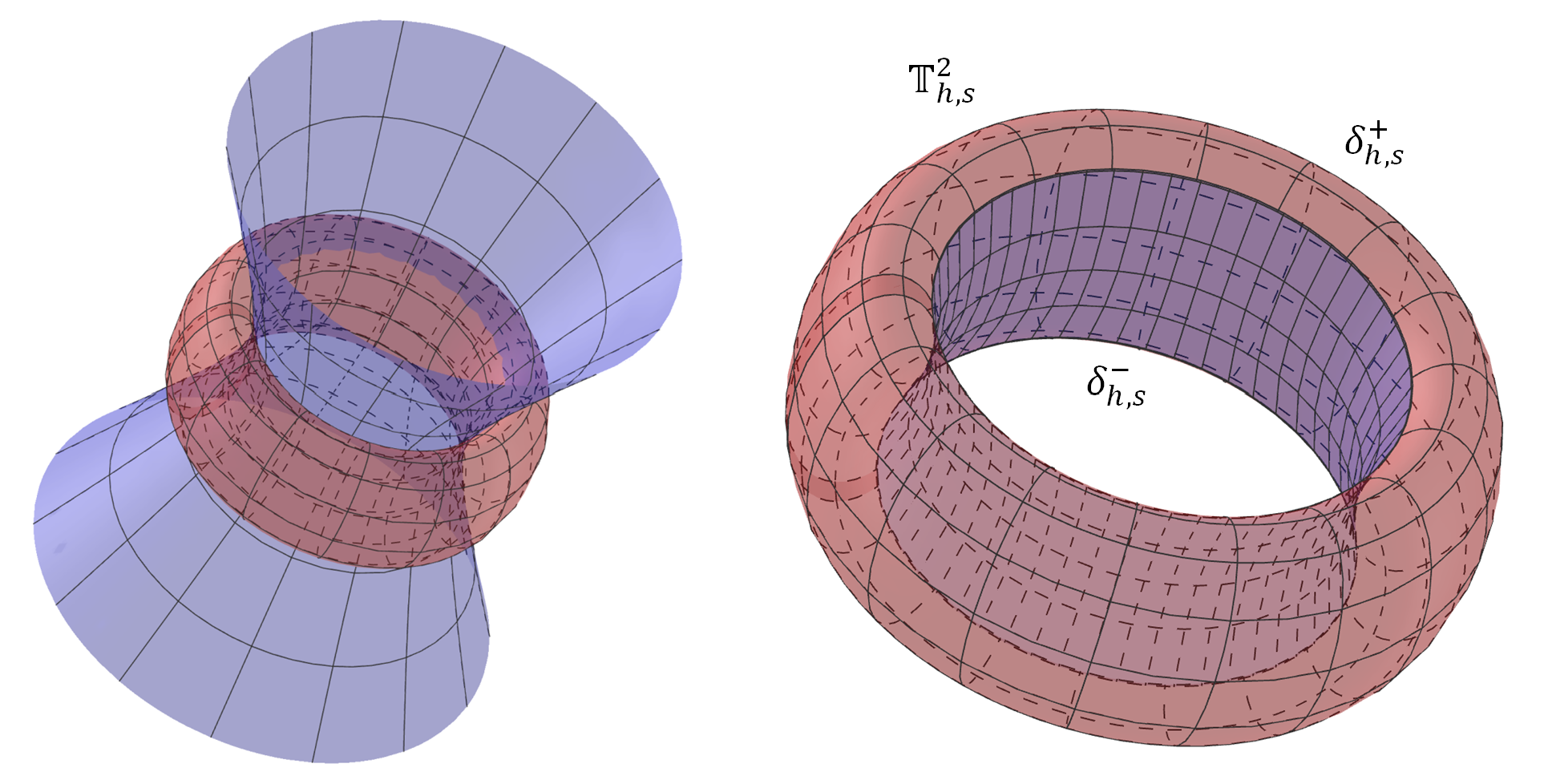}}
\caption{The cylinder $\{H_1=h, S=s\}$ and the torus $\{H_2=h, S=s\}$ in $\R^4$ (left) and the regular invariant torus $\mathbb T^2_{h,s}$
(right). The intersection  are the cycles $\delta^+_{h,s}$ and $\delta^-_{h,s}$.\label{fig:presek}}}\end{figure}

The intersection of the cylinder and the torus contains two circles $\delta^+_{h,s}$ and $\delta^-_{h,s}$, which correspond to the level sets $\{S=s\}$ of the return maps on $M_h^+$ and $M_h^-$
(see Remark \ref{S=s}). The circles $\delta^+_{h,s}$ and $\delta^-_{h,s}$ are Poincar\'e's sections on $\mathbb T^2_{h,s}$ and the rotations given by $\Theta^h_+$ and $\Theta^h_-$ are the associated Poincar\'e's maps. This is  schematically illustrated in Figure  \ref{fig:lepljenje} for $C^0$-approximation of the system.

\section{$C^0$-approximative system}\label{sec3}

\subsection{Definition of the $C^0$-approximative system} For $g> 0$ and $\vert x\vert >\ell$, the Hamiltonian system defined by \eqref{eq:ham*} is an elastic pendulum, which is not integrable
(see e.g. \cite{Ku, L, MP, AMBC} and references therein).
Therefore, we can not hope to determine the corresponding map $\Psi$ (see equation \eqref{Psi}) by quadratures.

\begin{rem}
Note that in the region $\Sigma_+$ for a sufficiently small $g$, we can apply the $C^\infty$-smooth KAM theorem (see e.g., \cite{Ar, TZ}) in order to conclude that some of the invariant non-resonant Lagrangian tori are preserved.
 In contrast to that, an interesting problem is to investigate the preservation of the invariant tori after a perturbation with the gravity potential $g x_2$ within the region $\Sigma_-$,  where the invariant tori are  only $C^1$-smooth at $\Delta$.
\end{rem}

It is well known that some qualitative properties of the elastic pendulum in dimension $3$ were obtained from its integrable approximation (see \cite{DGC}).

In our case, in order to have an explicit description for the mappings $\Psi$ in the presence of gravity, instead of the
potential
\[
V^*(x)=gx_2+\frac12\sigma (\vert x\vert-\ell)^2\theta(\vert x\vert-\ell),
\]
we will consider its following approximation
\[
V(x)=gx_2+\frac12\rho (\vert x\vert^2-{\ell^2})\theta(\vert x\vert-\ell),
\]
where $\rho>0$ is a real parameter.
 For $\vert x\vert\ge\ell$, $V(x)$ is the  Hook potential of the elastic force centered at  $C(0,-g/\rho)$:
\begin{equation}\label{elasticni}
V(x)=\frac12\rho\Big(x_1^2+\Big(x_2+\frac{g}{\rho}\Big)^2\Big)-\frac{g^2}{2\rho}-\frac{\rho\ell^2}2.
\end{equation}

Now, the new Hamiltonian \eqref{eq:ham2}
is $C^0$-continuous at $\Delta$ and $C^{\infty}$-smooth  outside $\Delta$.
Therefore, the Hamiltonian vector field {\it is not continuous} at $\Delta$.  Within the disk $D$  (see \eqref{eq:disk}), the Hamiltonian equations of motion are
\begin{equation}\label{ham1g}
\dot x=p, \qquad \dot p=-g\mathbf e_2,
\end{equation}
while outside the disk $D$, the Hamiltonian system takes the form
\begin{equation}\label{ham2g}
\dot x=p, \quad \dot p=-g\mathbf e_2+\mathbf G, \qquad \mathbf G=
-\rho x \quad \big(\mathbf G\vert_{\partial D}\neq 0\big).
\end{equation}
The trajectories of \eqref{ham1g} are parabolas or vertical rays:
\begin{equation}\label{realnaunutar}
x_1(t)=x_{10}+p_{10}t,\ \ x_2(t)=x_{20}+p_{20}t-\frac{1}{2}gt^2.
\end{equation}
The trajectories of \eqref{ham2g} are ellipses or segments, centered at $C(0,-g/\rho)$:
\begin{equation} \label{realnavan}
\begin{aligned}
x_1(t)&=x_{10}\cos\sqrt{\rho}t+\frac{p_{10}}{\sqrt{\rho}}\sin\sqrt{\rho}t,\\ x_2(t)&=\Big(x_{20}+\frac{g}{\rho}\Big)\cos\sqrt{\rho}t+\frac{p_{20}}{\sqrt{\rho}}\sin\sqrt{\rho}t-\frac{g}{\rho}.
\end{aligned}
\end{equation}

It is convenient to rewrite \eqref{realnaunutar} and \eqref{realnavan} using a complex notation:
\begin{align}\label{unutar}
\mathbf x(t)&=x_1(t)+\i x_2(t)=\mathbf x_0+t \mathbf p_0-\frac12 g t^2 \i, \qquad \mathbf x_0, \mathbf p_0\in\mathbb C,\\
%x(t)=x_0+tp_0-\frac12 g t^2\mathbf e_2=(x_{1,0}+p_{1,0}t,x_{2,0}+p_{2,0}t-\frac12 gt^2),
\label{van}
\mathbf x(t)&=x_1(t)+\i x_{2}(t)=\mathbf y_1e^{\i\sqrt{\rho}t}+\mathbf y_2e^{-\i\sqrt{\rho}t}-\frac{g}{\rho}\i, \qquad \mathbf y_0,\mathbf y_1\in\mathbb C.
\end{align}

The Hamiltonian dynamics near  $\Delta\setminus \Delta_0=\Delta_+\cup\Delta_-$ is defined as follows  (see \eqref{Delta}--\eqref{Delta-}).

\begin{itemize}
\item[\textbf{R0}] Let $(x(0),p(0))\in\Delta\setminus\Delta_0$. If $p(0)$ is directed outside the disk $D$ then for $t\in (-\varepsilon,0]$ the Hamiltonian motion is given by \eqref{van}, and for $t\in [0,\varepsilon)$, the motion is given by \eqref{unutar}. Vice versa, if $p(0)$ is directed inside, then for $t\in (-\varepsilon,0]$ the motion is given by \eqref{unutar}, and for $t\in [0,\varepsilon)$, the motion is given by \eqref{van}.
Let $\mathbf p=p_1+\i p_2$.
The values of $\mathbf x_0, \mathbf p_0, \mathbf y_1, \mathbf y_2$ are uniquely determined from the equations
\begin{align}\label{pocetni}
\mathbf x(0)=\mathbf x_0=\mathbf y_1+\mathbf y_2-\frac{g}{\rho}\i,
\qquad \mathbf p(0)=\mathbf p_0=\i\sqrt{\rho}(\mathbf y_1-\mathbf y_2).
\end{align}
\end{itemize}

Then the solution $\mathbf x(t)$ is $C^1$-smooth at $0$, while $\mathbf p(t)$ is $C^0$-continuous at $0$.
The Hamiltonian function is preserved along the motion. Note that the value of the Hamiltonian function in terms of the initial data
\eqref{pocetni} is given by
\begin{equation}\label{poc:en}
H=\frac12\mathbf p_0\bar{\mathbf p}_0+\Im(\mathbf x_0)g=\frac12 \rho(\mathbf y_1-\mathbf y_2)(\bar{\mathbf y}_2-\bar{\mathbf y}_1)
+\Im\Big(\mathbf y_1+\mathbf y_2-\frac{g}{\rho}\i\Big)g.
\end{equation}

Further, we use the following two rules to extend the dynamics:
\begin{itemize}
\item[\textbf{R1}] If a solution $\mathbf x(t)$, $t\in(-\varepsilon,\varepsilon)$ of \eqref{ham2g}, is such that
$\vert\mathbf x(t)\vert\ge\ell$ that is tangent to the circle $\partial D$ at $0$, including the case when the velocity is equal to zero at $0$,
then it is a smooth solution of the Hamiltonian system \eqref{eq:ham2}.

\item[\textbf{R2}]  If we have a solution
$\mathbf x(t)$, $t\in(-\varepsilon,\varepsilon)$ of \eqref{ham1g}, such that
$\vert\mathbf x(t)\vert\le\ell$ that is tangent to the circle $\partial D$ at $0$, including the case when the velocity is equal to zero at $0$, then it is a smooth solution of the Hamiltonian system \eqref{eq:ham2}.
\end{itemize}

There exists a set $\Pi\subset \Delta_0$ of initial conditions that are not covered with the above cases \textbf{R1} and \textbf{R2} and for these initial conditions we assume that the dynamics is not defined.
For example, the point $(x(0),p(0))=(\pm \ell,0,0,0)$ belongs to $\Pi$.
However, all the  initial conditions within $T^*\R^2\setminus \Delta_0$ lead to well defined trajectories.
\begin{figure}[h]
\begin{center}
{\includegraphics[width=10cm]{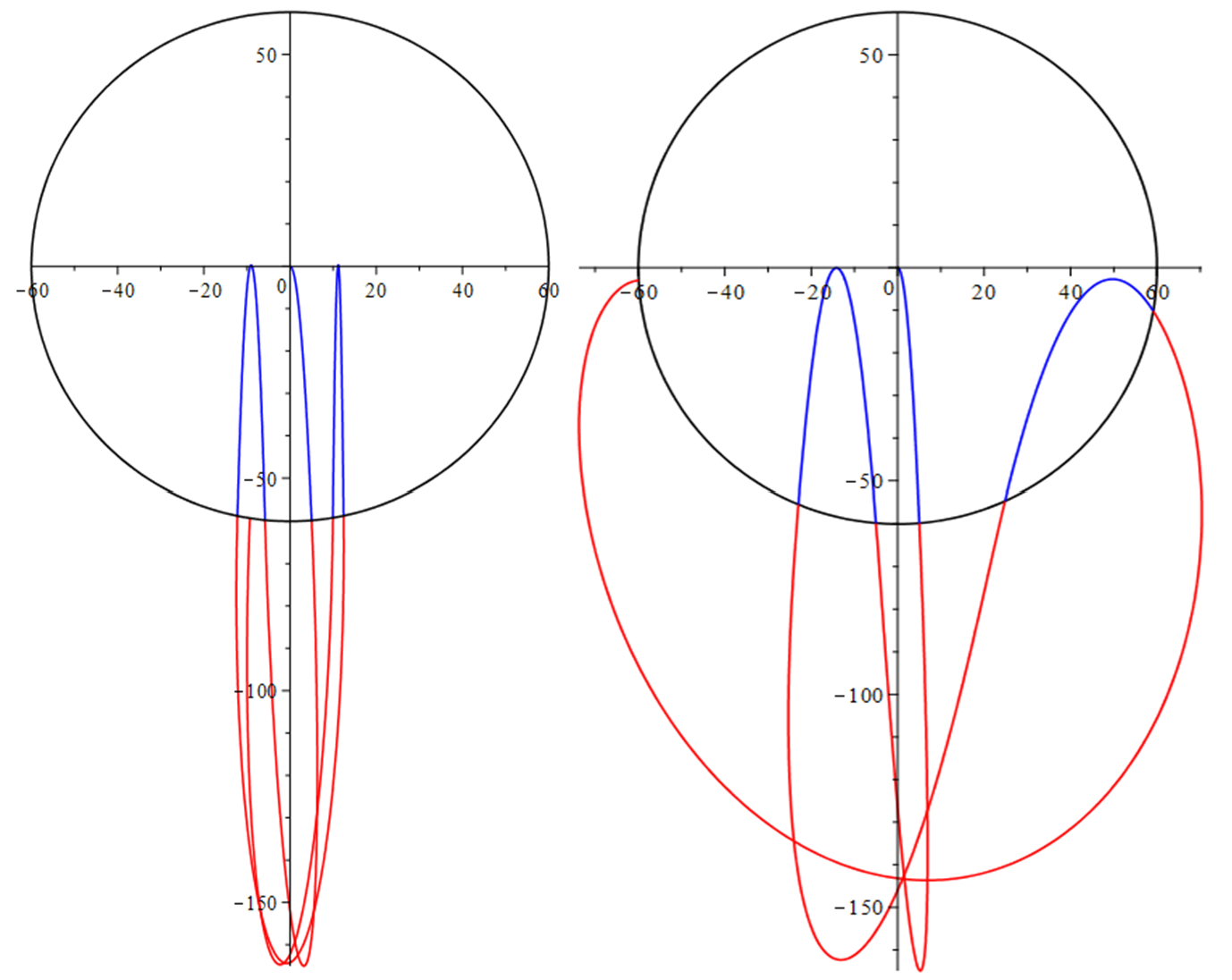}
\caption{The trajectory of the $C^0$-model  are on the left and of the $C^1$-model on the right of bungee jumping at the Verzasca Dam that starts at the origin, with the initial velocity $p_{10}=1.43m/s$, $p_{20}=0$.
Inside the disk $D$, the trajectories  in both cases are arcs of parabolas, represented in blue. Outside the disk, the trajectories  are  arcs of ellipses on the left, represented in red, while on the right are the trajectories of the  elastic pendulum.}
\label{fig:svica}}
\end{center}
\end{figure}
By the \emph{free jump} we mean a motion with the initial condition
\[
(x_1(0),x_2(0),p_1(0),p_2(0))=(0,0,0,0).
\]

Let $R^*_{\max}, R_{\max}$ be the maximal distance from the origin of the free jump of the original
system defined by \eqref{eq:ham*} and the approximative system defined by \eqref{ham1g} and \eqref{ham2g}, respectively:
\begin{align}
\label{rM*} &\sigma {R^{*}_{\max}}^2- 2(\sigma \ell+g) R^*_{\max}+  \sigma {\ell^2}=0,\\
\label{rM}  &\rho {R_{\max}}^2-2g R_{\max}-\rho  {\ell^2}=0.
\end{align}

From \eqref{rM*} and \eqref{rM}, we get
\begin{align*}
& R^{*}_{\max}=l+\frac{g}{\sigma} +\sqrt{\Big(\ell+\frac{g}{\sigma}\Big)^2-{\ell^2}}=\ell+\frac{g}{\sigma}+\sqrt{\frac{g^2}{\sigma^2}+2\frac{\ell g}{\sigma}}, \\
& R_{\max}=\frac{g}{\rho} +\sqrt{\frac{g^2}{\rho^2}+{\ell^2}}
\end{align*}

In addition, we consider the equilibrium position  $C^*$
of the original system (defined by \eqref{eq:ham*}) and,
for $g/\rho>\ell$, the equilibrium position $C$ of the approximative system:
\begin{equation*}
 C^*\big(0,-(\ell+\frac{g}{\sigma})\big), \qquad C\big(0,-\frac{g}{\rho}\big).
\end{equation*}

A natural choice for the parameter $\rho$ is given by the conditions: $R_{\max}^*=R_{\max}$ and $C^*=C$.
However, these equations are overdetermined and without a solution.
We consider the following two cases:

\begin{itemize}
\item{} If we are interested in trajectories with initial conditions  $(x(0), p(0))$ close to $(0,0,0,0)$, then
we assume $R_{\max}=R^*_{\max}$ and
instead of the condition $C^*=C$, a weaker condition  $g/\rho>\ell$ (the existence of the
equilibrium position $C$) is assumed.

\item{} If we are interested in trajectories with initial conditions  $(x(0),p(0))$ close to  $C^*$, then
we assume $C=C^*$, that is,
\[
\rho=\frac{\sigma g}{\sigma \ell+g}.
\]
%Note that in the second case, the dynamics is well defined even when $(x(0),p(0))$ belongs to $\Delta_0$.
\end{itemize}

The above analysis, in general, suggests that it is natural to assume that the point $C(0,-g/\rho)$ is outside the disk $D$, which leads to the assumption:
\begin{equation}\label{uslov:rho}
\frac{g}{\rho}>\ell.
\end{equation}

The point $C$ is the equilibrium point of the approximative system \eqref{ham1g} and \eqref{ham2g}  and this point is the center of all the ellipses and segments given by \eqref{van}.

\begin{exm}
Consider the bungee jumping at the Verzasca Dam located in the Ticino region of
Switzerland.  The maximal possible stretched length of the elastic cord is about 220m.\footnote{https://www.007bungy.ch/} We model the free jump
by  the cord of the length $\ell=60 m$ that is stretched to $R^*_{\max}=165 m$. The trajectories of our model with the unit mass correspond to the
trajectories of the body with the mass $m$ by denoting $\sigma/m$ and $\rho/m$ by $\sigma$ and $\rho$, respectively. From the condition $R_{\max}=R^*_{\max}=165m$, we get
\begin{align*}
&165 m=60 m+\frac{g}{\sigma} +\sqrt{\frac{g^2}{\sigma^2}+120m\frac{g}{\sigma}}  \Rightarrow    \vert OC^*\vert=60 m+\frac{g}{\sigma}=60 m+\frac{11025}{330}m\approx 93,41m,\\
&165 m=\frac{g}{\rho} +\sqrt{\frac{g^2}{\rho^2}+(60 m)^2} \quad   \Rightarrow \quad \vert OC\vert=\frac{g}{\rho}=\frac{23625}{330}m\approx 71,59m> 60m.
\end{align*}
The trajectory that starts at the origin, with the initial velocity given by $p_{10}=1.43 m/s$, $p_{20}=0$ with  five crossings through $\partial D$ and ending at $\partial D$ is presented in the Figure \ref{fig:svica},  in two ways: using  the $C^0$-model (left) and the $C^1$-model (right).
The value of the gravitational constant $g$ is $9.81m/s^2$.

\end{exm}

\subsection{The return maps}

%In the case of $C^0$-approximation one can find  the billiard mappings $\Theta_{\pm}\colon \Delta_\pm\to\Delta_\pm$ in the explicit algebraic forms.
Let  $\Psi$, $\Phi$,  $\Theta_{+}=\Phi\circ\Psi$, and $\Theta_{-}=\Psi\circ\Phi$
be defined as in Section \ref{sec2}.   The maps $\Psi$ and $\Phi$ correspond to two successive  intersections  with the boundary circle $\partial D$ of the trajectories outside the disc $D$ and within the disk $D$, respectively. A single trajectory of the system defines a sequence of points on $\Delta_+$ and $\Delta_-$, the trajectories of the return maps $\Theta_+$ and $\Theta_-$ (see Figure \ref{fig:lepa} (left)).

\begin{figure}[h]
\begin{center}
{\includegraphics[width=9cm]{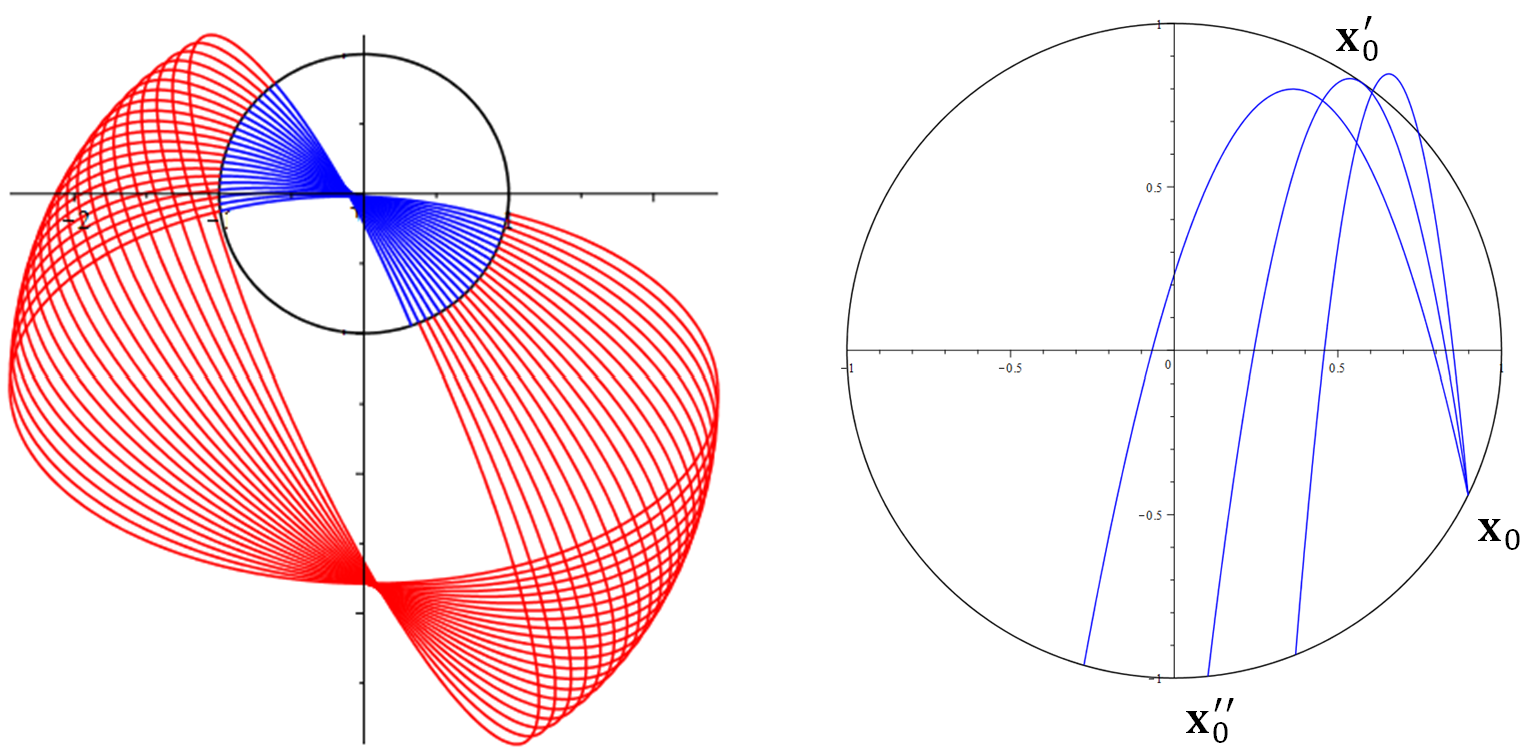}
\caption{On the left: a trajectory of $C^0$-approximate system with a positive energy; on the right: three parabolas with nearby initial conditions staring at $\mathbf x_0$ and having the same positive energy, one of which is tangent to the boundary circle. \label{fig:lepa}}
}
\end{center}
\end{figure}

%The return map $\Theta_{+}=\Phi\circ\Psi$ and $\Theta_{-}=\Psi\circ\Phi$ will be calculated in an explicit algebraic form.
However, the mappings $\Phi$ and $\Psi$ are not continuous. Indeed, let
\[
\mathcal P(\mathbf{x}_0,\mathbf p_0)=\Big\{\mathbf x(t)=\mathbf x_0+t \mathbf p_0-\frac12 g t^2 \i\,\vert\, t\in\mathbb R\Big\}
\]
be the parabola or a vertical ray, with the initial condition  $(\mathbf{x}_0,\mathbf p_0)\in\Delta_{-}$. Consider the case when $\mathcal P(\mathbf{x}_0,\mathbf p_0)$ is  parabola that is tangent to $\partial D$ at the point $\mathbf{x}_0'$ and intersects the boundary circle  $\partial D$ at $\mathbf{x}_0''$. Then, $\mathbf{x}_0''$ is the $\mathbf{x}$-component of $\Phi(\mathbf{x}_0,\mathbf p_0)$.
 By varying  of the initial condition $\mathbf p_0$, one gets the $\mathbf{x}$-component of $\Phi(\mathbf{x}_0,\mathbf p_0)$  close to two distinct points $\mathbf{x}_0'$ and $\mathbf{x}_0''$ (see Figure \ref{fig:lepa} (right)). Discontinuity appears also when $\mathcal P(\mathbf{x}_0,\mathbf p_0)$ is a vertical ray that touches the boundary circle $\partial D$ at  $\mathbf{x}_0'$.  Similarly, there is a discontinuity in the map $\Psi$, when the outgoing ellipse is tangent to $\partial D$.

Our aim is to construct $C^\infty$-smooth return maps for a fixed energy level and for all the initial conditions.
 We restrict our study to the portion of the phase space with the negative energy  $H(\mathbf{x}_0,\mathbf p_0)=h<0$, where
$(\mathbf{x}_0,\mathbf p_0)\in \Delta$. Then,
 $x_2<0$ and there are no parabolas that are tangent to $\partial D$ and there are no rays that touch $\partial D$.

Under this assumption, we first analyse and describe the map $\Psi$. Assume $(x(0),p(0))\in \Delta_+$ and consider the trajectory $\mathbf x(t)$ given by \eqref{van},  \eqref{pocetni},
and the corresponding ellipse or the segment
\[
\mathcal E(\mathbf x_0,\mathbf p_0)=\Big\{\mathbf x(t)=\mathbf y_1e^{\i\sqrt{\rho}t}+\mathbf y_2e^{-\i\sqrt{\rho}t}-\frac{g}{\rho}\i\,\vert\, t\in\mathbb R\Big\},
\]
centered at $C$.
Note that, due to the form of the potential \eqref{elasticni} and the negative energy condition $H=h<0$,  $\mathcal E(\mathbf x_0,\mathbf p_0)$ belongs to the disk $D_1$ centered at $C$ of radius $\sqrt{(g/\rho)^2+\ell^2}$  (see Figure \ref{fig:krugovi} (left)):
\begin{equation}\label{disk1}
D_1=\Big\{\mathbf x\in\mathbb C\,\vert\, \big(\mathbf x+\i\frac{g}{\rho}\big)\big(\bar{\mathbf x}-\i\frac{g}{\rho}\big)<\frac{g^2}{\rho^2}+\ell^2\Big\}.
\end{equation}

The condition \eqref{uslov:rho} is satisfied and the point $C$ is outside the disk $D$.
Let $M,N\in\partial D$ be the points  of the intersections of the tangent lines through $C$ to the circle $\partial D$.   If we consider the initial condition $(\mathbf x_0,\mathbf p_0)\in\Delta_+$ with the energy $H=h<V(M)=V(N)$,  then $\mathcal E(\mathbf x_0,\mathbf p_0)$  belongs to the disk $D_2$ with the center $C$ and radius $\vert CM\vert=\vert CN\vert$. Consequently, $\mathcal E(\mathbf x_0,\mathbf p_0)$ has only two points of intersection in the case of the ellipse, or only one point of intersection in the case of the segment, with the boundary circle $\partial D$ (Figure \ref{fig:krugovi} (right)).  Thus, we avoid the cases when the ellipse is tangent to $\partial D$ and has three  geometrically distinct intersection points, although it belongs to $D_1$ (Figure \ref{fig:krugovi} (left)).

\begin{figure}[h]
\begin{center}
{\includegraphics[width=12cm]{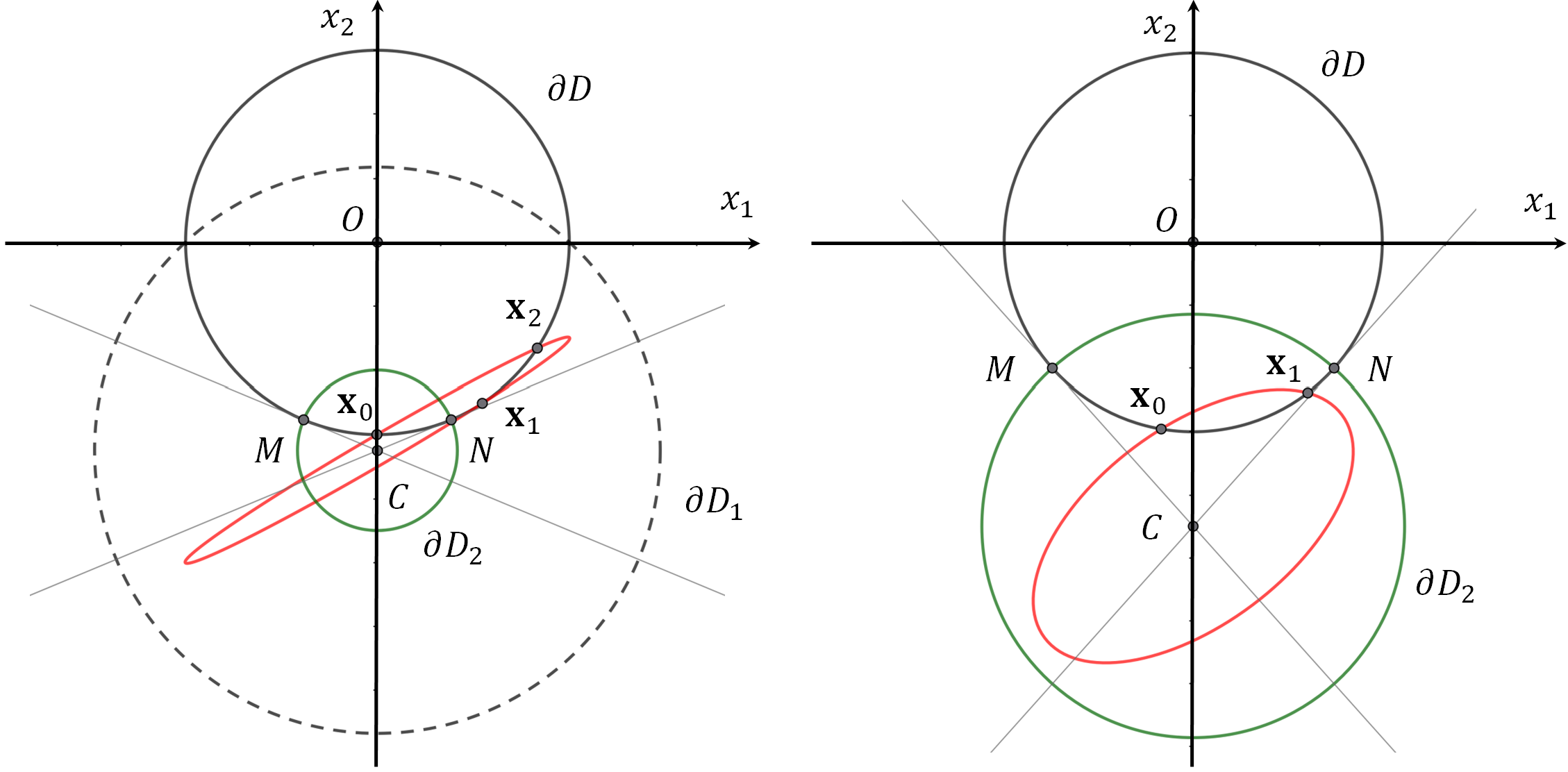}
\caption{ An ellipse $\mathcal E(\mathbf x_0,\mathbf p_0)$ that belongs to $D_1$, which is tangent to the boundary circle $\partial D$ (left). The corresponding solution
$(\mathbf x(t),\mathbf p(t))$ given by \eqref{van} intersects $\Delta$ in the points $(\mathbf x_0,\mathbf p_0)\in\Delta_+$, $(\mathbf x_1,\mathbf p_1)\in\Delta_0$, and $(\mathbf x_2,\mathbf p_2)\in\Delta_-$. All ellipses $\mathcal E(\mathbf x_0,\mathbf p_0)$, $(\mathbf x_0,\mathbf p_0)\in\Delta_+$, that are within $D_2$, intersect $\partial D$ in exactly two points (right).  \label{fig:krugovi}}
}
\end{center}
\end{figure}

 Now, let $(\mathbf x_0,\mathbf p_0)\in\Delta_+$ be an arbitrary initial condition.
We want to find the first moment of time $\tau>0$ such that
\[
\mathbf x(\tau)\bar{\mathbf x}(\tau)={\ell^2}.
\]

\begin{lem}\label{lem3}
Denote $e^{\i\sqrt{\rho}\tau}$ by $z$; $z$ is a solution of
 the complex reciprocal polynomial equation of the fourth degree
\begin{equation}
a z^4+b z^3-(a+\bar a+b+\bar b)z^2+\bar b z+\bar a=0,
\label{polinom4}
\end{equation}
where
$$\begin{aligned}
a&=a_1+\i a_2=\mathbf y_1\bar{\mathbf y}_2=\frac{1}{4}\Big(\mathbf{x}_0+\i\frac{g}{\rho}-\i\frac{\mathbf{p}_0}{\sqrt{\rho}}\Big)\Big(\bar{\mathbf{x}}_0-\i\frac{g}{\rho}-
\i\frac{\bar{\mathbf{p}}_0}{\sqrt{\rho}}\Big)\\
b&=b_1+\i b_2=\i\frac{g}{\rho}(\mathbf y_1-\bar{\mathbf y}_2)=\frac{\i g}{2\rho}\Big(\mathbf{x}_0-\bar{\mathbf x}_0+\frac{2\i g}{\rho}-\frac{\i}{\sqrt{\rho}}(\mathbf{p}_0-\bar{\mathbf{p}}_0)\Big).
\end{aligned}
$$
The parameter $\alpha\in\mathbb R$ defined by $e^{\i\sqrt{\rho}\tau}=\cos\alpha+\i\sin\alpha$ satisfies the trigonometric equation:
\begin{equation}
\sin\frac{\alpha}{2}\Big(2a_1\sin\alpha\cos\frac{\alpha}{2}+2a_2\cos\alpha\cos\frac{\alpha}{2}+b_1\sin\frac{\alpha}{2}+b_2\cos\frac{\alpha}{2}\Big)=0.
\label{trigonometrijska}
\end{equation}
\end{lem}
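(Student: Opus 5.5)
The plan is a direct substitution of the elliptic solution \eqref{van} into the circle condition $\mathbf x(\tau)\bar{\mathbf x}(\tau)=\ell^2$, using that $z=e^{\i\sqrt\rho\tau}$ lies on the unit circle, so that $\bar z=z^{-1}$.

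\textbf{Step 1: the coefficients.} First I solve \eqref{pocetni} for $\mathbf y_1,\mathbf y_2$:
\[
\mathbf y_{1}=\tfrac12\Big(\mathbf x_0+\i\tfrac{g}{\rho}-\i\tfrac{\mathbf p_0}{\sqrt\rho}\Big),\qquad \mathbf y_{2}=\tfrac12\Big(\mathbf x_0+\i\tfrac{g}{\rho}+\i\tfrac{\mathbf p_0}{\sqrt\rho}\Big).
\]
Inserting these into $a=\mathbf y_1\bar{\mathbf y}_2$ and $b=\i\frac g\rho(\mathbf y_1-\bar{\mathbf y}_2)$ gives the stated expressions in terms of $\mathbf x_0,\mathbf p_0$. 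This is a routine check.

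\textbf{Step 2: the quartic.} Next I expand
\[
\Big(\mathbf y_1z+\mathbf y_2z^{-1}-\i\tfrac g\rho\Big)\Big(\bar{\mathbf y}_1z^{-1}+\bar{\mathbf y}_2z+\i\tfrac g\rho\Big)=\ell^2 .
\]
Collecting powers of $z$ gives
\[
a z^2+\bar a z^{-2}+b z+\bar b z^{-1}+c=0,\qquad c=|\mathbf y_1|^2+|\mathbf y_2|^2+\tfrac{g^2}{\rho^2}-\ell^2 .
\]
The coefficient of $z$ is $\i\frac g\rho(\mathbf y_1-\bar{\mathbf y}_2)=b$, and the coefficient of $z^{-1}$ is its conjugate. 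The key observation is that $z=1$ (i.e. $\tau=0$) is a root, because $\mathbf x_0\in\partial D$. This forces $c=-(a+\bar a+b+\bar b)$, so no explicit formula for $c$ is needed. Multiplying by $z^2$ then yields the reciprocal quartic \eqref{polinom4}.

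\textbf{Step 3: the trigonometric form.} I put $z=e^{\i\alpha}$ in the Laurent form. Since the left-hand side is real, the equation becomes
\[
a_1(\cos2\alpha-1)-a_2\sin2\alpha+b_1(\cos\alpha-1)-b_2\sin\alpha=0 .
\]
I then use the half-angle identities
\[
\cos2\alpha-1=-4\sin\tfrac\alpha2\cos\tfrac\alpha2\sin\alpha,\qquad \sin2\alpha=4\sin\tfrac\alpha2\cos\tfrac\alpha2\cos\alpha,
\]
\[
\cos\alpha-1=-2\sin^2\tfrac\alpha2,\qquad \sin\alpha=2\sin\tfrac\alpha2\cos\tfrac\alpha2 .
\]
Factoring out $-2\sin\frac\alpha2$ gives exactly \eqref{trigonometrijska}. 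The factor $\sin\frac\alpha2$ corresponds to the trivial root $\tau=0$.

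The only delicate point is bookkeeping of conjugates in Step 2, in particular checking that the $z^{-1}$ coefficient is indeed $\bar b$. Recognizing that the constant term is fixed by the root $z=1$, rather than computing it directly, is what makes the polynomial take the stated reciprocal form.
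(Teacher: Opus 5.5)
Your proof is correct and follows the paper's route: substitute the elliptic solution into the circle condition using $\bar z=z^{-1}$, collect powers of $z$ to get the reciprocal quartic, then set $z=e^{\i\alpha}$ and factor out $\sin\frac{\alpha}{2}$ with half-angle identities. The only cosmetic difference is that the paper writes the right-hand side directly as $(\mathbf y_1+\mathbf y_2-\frac{g}{\rho}\i)(\bar{\mathbf y}_1+\bar{\mathbf y}_2+\frac{g}{\rho}\i)=|\mathbf x_0|^2$, which is the same as your use of the root $z=1$ to fix the constant term.
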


\begin{proof}
By using that $e^{-\i\sqrt{\rho}\tau}=z^{-1}=\bar{z}$,  one gets that $z$ is a solution of the equation:
\[
\Big(\mathbf y_1 z+\mathbf y_2 z^{-1}-\frac{g}{\rho}\i\Big)\Big(\bar{\mathbf y}_1 z^{-1}+\bar{\mathbf y}_2 z+\frac{g}{\rho}\i\Big)=\Big(\mathbf y_1 +\mathbf y_2-\frac{g}{\rho}\i\Big)
\Big(\bar{\mathbf y}_1 +\bar{\mathbf y}_2+\frac{g}{\rho}\i\Big),
\]
which is equivalent to \eqref{polinom4}.
The above  equation in terms of $\alpha$ implies
$$
\begin{aligned}
a (\cos2\alpha+\i\sin2\alpha)&+b (\cos\alpha+\i\sin\alpha)\\
&+\bar b(\cos\alpha-\i\sin\alpha) +\bar a(\cos2\alpha-\i\sin2\alpha)=a+\bar a+b+\bar b.
\end{aligned}
$$
After applying standard trigonometric identities, the last equation reduces to \eqref{trigonometrijska}.
\end{proof}

The equation \eqref{polinom4} can have two or four solutions of modulus $1$, counting multiplicity.

In the case of the segment, regardless of
the number of the solutions of modulus $1$, we have $\Psi(\mathbf x_0,\mathbf p_0)=(\mathbf x_0,-\mathbf p_0)$.

In the case of the ellipse, the solutions  $z$ of modulus $1$ correspond to the distinct points of the intersections of the circle $\partial D$ and the ellipse $\mathcal E(\mathbf x_0,\mathbf p_0)$.
 For a general initial conditions $(\mathbf x_0,\mathbf p_0)\in\Delta_+$, we need a careful analysis to find the first intersection point $\mathbf x(\tau)$ after $\mathbf x_0$ along the solution $\mathbf x(t)$.
A discussion with four intersection points for $g=0$ will be given in the next Section.

The $x_2$-coordinate
of the points $M$ and $N$ is $-\ell^2/\vert OC\vert=-\ell^2\rho/g$. Therefore, for (see \eqref{poc:en})
\begin{equation}\label{uslov:energija}
H=\frac12\mathbf p_0\bar{\mathbf p}_0+\Im(\mathbf x_0)g=\frac12 \rho(\mathbf y_1-\mathbf y_2)(\bar{\mathbf y}_2-\bar{\mathbf y}_1)
+\Im\Big(\mathbf y_1+\mathbf y_2-\frac{g}{\rho}\i\Big)g=h<-\rho\ell^2,
\end{equation}
the ellipse, or the segment, $\mathcal E(\mathbf x_0,\mathbf p_0)$  belongs to the disk
\[
D_2=\Big\{\mathbf x\in\mathbb C\,\vert\, \big(\mathbf x+\i\frac{g}{\rho}\big)\big(\bar{\mathbf x}-\i\frac{g}{\rho}\big)<\frac{g^2}{\rho^2}-\ell^2\Big\}.
\]
 Then, there are only two solutions  $z$ of the polynomial equation \eqref{polinom4} of modulus $1$, that is,
only two solutions $\alpha$ of the trigonometric equation \eqref{trigonometrijska} in the interval $[0,2\pi)$. Moreover, the map $\Psi$ is $C^\infty$-smooth at
$(\mathbf x_0,\mathbf p_0)$.

We proceed with the trigonometric equation \eqref{trigonometrijska}. We are looking for its solutions $\alpha$ in the interval $(0,2\pi)$.
The value $\alpha=0$ corresponds to $z=1$, i.e., to the initial point.
Recall that $a_1, a_2, b_1, b_2$ are defined in Lemma \ref{lem3}.

\begin{thm}
\label{th:prva}
Suppose that $(\mathbf{x}_0,\mathbf{p}_0)\in\Delta_+$ satisfies the energy condition \eqref{uslov:energija}. The map $
\Psi:(\mathbf{x_0}, \mathbf{p_0})\mapsto(\mathbf{x_1}, \mathbf{p_1})
$ is given by
\begin{align}
\mathbf{x}_1&=x_{10}\cos{\alpha}+\frac{p_{10}}{\sqrt{\rho}}\sin\alpha+\i\Big(\big(x_{20}+\frac{g}{\rho}\big)\cos\alpha+\frac{p_{20}}{\sqrt{\rho}}\sin\alpha-\frac{g}{\rho}\Big),\\
\mathbf{p}_1&={p_{10}}\cos\alpha-\sqrt{\rho}x_{10}\sin{\alpha}+\i\Big({p_{20}}\cos\alpha-\sqrt{\rho}\big(x_{20}+\frac{g}{\rho}\big)\sin\alpha\Big),
\end{align}
 where $\tan\frac{\alpha}{2}=u_1$, and
\begin{align}
\label{uu1}&u_1=\sqrt[3]{w_1}+\sqrt[3]{w_2}+\frac{2a_2-b_2}{3b_1},\\
\label{D1e}&w_{1,2}=\frac{-q_e}{2}\pm \frac{\sqrt{-\mathcal D_1}}{6\sqrt{3}b_1^2},\\
&\mathcal D_1=\frac{4}{27b_1^2}\Big(b_1^2+3(2a_2-b_2)(4a_1+b_1)\Big)^3-108b_1^4(q_e)^2,\\
\nonumber &q_e=\frac{1}{54b_1^3}\Big(2(-2a_2+b_2)^3-9b_1(-2a_2+b_2)(4a_1+b_1)-27b_1^2(2a_2+b_2)\Big).
\end{align}
\end{thm}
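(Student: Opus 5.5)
The plan is to start from Lemma~\ref{lem3}. The ellipse solution \eqref{van} with initial data \eqref{pocetni} is exactly the real form \eqref{realnavan}. Evaluating it at $t=\tau$ with $\sqrt{\rho}\tau=\alpha$ gives the stated formula for $\mathbf x_1$. Differentiating in $t$ and evaluating at $\tau$ gives $\mathbf p_1$ (each $x$-coefficient acquires the factor $\sqrt{\rho}$, $\cos\to-\sin$, $\sin\to\cos$). So the whole content of the theorem is the identification of $\alpha$, i.e. the first positive root $\alpha\in(0,2\pi)$ of the trigonometric equation \eqref{trigonometrijska} other than the trivial root $\alpha=0$ coming from the factor $\sin(\alpha/2)$.

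I would divide \eqref{trigonometrijska} by $\sin(\alpha/2)$ and by $\cos^3(\alpha/2)$. Since the ellipse meets $\partial D$ only at $\mathbf x_0$ and $\mathbf x_1$, we may assume $\alpha\ne\pi$; the case $\cos(\alpha/2)=0$ can be handled separately or excluded generically. Then substitute $u=\tan(\alpha/2)$, with
\[
\sin\alpha=\frac{2u}{1+u^2},\qquad \cos\alpha=\frac{1-u^2}{1+u^2},\qquad \sin\tfrac{\alpha}{2}=u\cos\tfrac{\alpha}{2}.
\]
After multiplying by $(1+u^2)$, the remaining factor becomes a real cubic,
\[
4a_1u+2a_2(1-u^2)+(b_1u+b_2)(1+u^2)=0,\quad\text{i.e.}\quad b_1u^3+(b_2-2a_2)u^2+(4a_1+b_1)u+(2a_2+b_2)=0.
\]
Next I would apply the shift $u=v+(2a_2-b_2)/(3b_1)$ to the depressed cubic $v^3+Pv+q_e=0$. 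This produces $P=\big(3b_1(4a_1+b_1)-(2a_2-b_2)^2\big)/(3b_1^2)$ and exactly the stated $q_e$. Cardano's formula $v=\sqrt[3]{w_1}+\sqrt[3]{w_2}$, with $w_{1,2}=-q_e/2\pm\sqrt{q_e^2/4+P^3/27}$, should then match \eqref{D1e} once the discriminant is rewritten as $-\mathcal D_1/(108b_1^4)$. I would check this normalization carefully.

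The main obstacle is justifying that the root produced by this real Cardano formula is the relevant one, and that it is the unique real root. Under the energy condition \eqref{uslov:energija}, the ellipse lies in $D_2$ and meets $\partial D$ in exactly two points, so \eqref{polinom4} has exactly two unimodular roots and \eqref{trigonometrijska} exactly two roots in $[0,2\pi)$. One of these is $\alpha=0$. Hence the cubic in $u$ has exactly one real root, corresponding to the other crossing. This forces $\mathcal D_1<0$, so $\sqrt{-\mathcal D_1}$ is real and the real cube roots give that unique root $u_1$. The second intersection is then automatically the first return time, and smoothness follows because the root is simple. I also expect to need $b_1\neq0$, which I would verify from $b_1=\frac{g}{\rho}\big(\frac{g}{\rho}+\Im\mathbf y_1+\Im\mathbf y_2\big)$ and $C$ lying outside the closed disk $D_2$. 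Alternatively I would treat $b_1=0$ as a degenerate limiting case.
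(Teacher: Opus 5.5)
Your proposal follows the paper's proof. You read $\mathbf x_1,\mathbf p_1$ off \eqref{realnavan} at $\sqrt{\rho}\,\tau=\alpha$, remove the factor $\sin\frac{\alpha}{2}$ from \eqref{trigonometrijska}, and pass to $u=\tan\frac{\alpha}{2}$ to get $P_3(u)=b_1u^3+(b_2-2a_2)u^2+(4a_1+b_1)u+2a_2+b_2$. You then use the two-intersection property under \eqref{uslov:energija} to get a single real root and a negative discriminant, and apply Cardano.

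The one step not justified as written is $\alpha\neq\pi$. That the ellipse meets $\partial D$ in only two points does not stop the second point from sitting at $\alpha=\pi$. ``Excluded generically'' is not enough for a statement about every admissible initial condition: if $\alpha=\pi$ occurred, there would be no finite $u_1$ at all. The paper closes this directly. At $\cos\frac{\alpha}{2}=0$ the left side of \eqref{trigonometrijska} reduces to $b_1$, so $\alpha=\pi$ is a root exactly when $b_1=0$, i.e. when $P_3$ loses its leading term. Geometrically, $\mathbf x(\pi/\sqrt{\rho})=2C-\mathbf x_0$, and $C\notin D$ cannot be the midpoint of a chord of $\partial D$. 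So the check $b_1\neq0$ you already planned is exactly what closes this step, but your formula is wrong. From $b=\i\frac{g}{\rho}(\mathbf y_1-\bar{\mathbf y}_2)$ one gets $b_1=-\frac{g}{\rho}(\Im\mathbf y_1+\Im\mathbf y_2)=-\frac{g}{\rho}\big(x_{20}+\frac{g}{\rho}\big)<0$, since $|x_{20}|\le\ell<g/\rho$ by \eqref{uslov:rho}. The relevant fact is that $C$ lies outside $D$; it is the center of $D_2$, so ``outside $D_2$'' cannot be meant. The paper's displayed expression for $b_1$ also differs from this, but only its sign is used.

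Two smaller remarks. First, your $P$ is correct, but with $X=b_2-2a_2$, $Y=4a_1+b_1$, $Z=2a_2+b_2$ the constant term of the depressed cubic is $q=\frac{2X^3-9b_1XY+27b_1^2Z}{27b_1^3}$. The discriminant is $\mathcal D(P_3)=\frac{4}{27b_1^2}(X^2-3b_1Y)^3-27b_1^4q^2$. With these, $w_{1,2}=-\frac{q}{2}\pm\frac{\sqrt{-\mathcal D(P_3)}}{6\sqrt3\,b_1^2}$ has exactly the displayed shape. However, $q$ and $\mathcal D(P_3)$ do not literally coincide with the displayed $q_e$ and $\mathcal D_1$; already at $a_1=a_2=0$, where $P_3=(b_1u+b_2)(1+u^2)$, the two values of $q$ differ. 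So your claim of obtaining ``exactly the stated $q_e$'' will not survive the careful check you intend; this affects the constants only, not the argument.

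Second, ``exactly one real root'' gives $\mathcal D(P_3)<0$ only if that root is simple, since a triple root has zero discriminant. Simplicity means the second crossing is transversal, which is true. At a point $P\in\partial D\cap D_2$ one has $\langle P,C\rangle>\ell^2$, so the tangent line to $\partial D$ at $P$ separates $C$ from $D$. An ellipse centred at $C$ and tangent there would then stay outside the open disk, contradicting the transversal crossing at $\mathbf x_0$. The paper takes this for granted as well.
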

\begin{proof} One solution of the equation \eqref{trigonometrijska} is  $\alpha=0$. This corresponds to the initial point. Since the center of $\mathcal E(\mathbf x_0,\mathbf p_0)$ is outside the disk $D$, the antipodal points of the circle $\partial D$ cannot be obtained as intersection points with $\mathcal E(\mathbf x_0,\mathbf p_0)$. Hence,  $\alpha=\pi$ is not a  solution of \eqref{trigonometrijska}. This fact can be obtained also directly from the equation \eqref{trigonometrijska}. An angle $\alpha$ that solves  $\cos\frac{\alpha}{2}=0$ is a solution of \eqref{trigonometrijska} only when $b_1=0$. But $b_1\ne 0$ because
\[
b_1=\frac{g}{4\rho}x_{20}-\frac{g^2}{\rho^2}<0.
\]
Thus, the equation \eqref{trigonometrijska} can be divided by $\cos\frac{\alpha}{2}$.

Set $u=\tan\frac{\alpha}{2}$. The equation \eqref{trigonometrijska} reduces to the cubic polynomial equation
\begin{equation}\label{polinom33}
P_3(u):=b_1u^3+u^2(b_2-2a_2)+u(4a_1+b_1)+2a_2+b_2=0.
\end{equation}

 The discriminant of the cubic polynomial $P_3$  is
 \[
 \mathcal{D}(P_3)=\mathcal D_1,
 \]
where $\mathcal D_1$ is given by \eqref{D1e}.  Since real solutions of \eqref{polinom33} correspond to the intersection points of
$\mathcal E(\mathbf x_0,\mathbf p_0)$ with the boundary circle $\partial D$, due to the energy condition \eqref{uslov:energija},
the discriminant $\mathcal{D}(P_3)$ is negative. Thus, the equation \eqref{polinom33} has one real and two complex conjugated solutions. The real solution is given by \eqref{uu1}. The complex conjugate solutions  are
$$
\begin{aligned}
u_2&=\bar{\varepsilon}_1\sqrt[3]{w_1}+\varepsilon_1\sqrt[3]{w_2}+\frac{2a_2-b_2}{3b_1}, \qquad u_3=\bar{u}_2,
\end{aligned}
$$
where $\varepsilon_1=(-1+\i\sqrt{3})/2$ and $w_{1,2}$ are given by \eqref{D1e}. The real solution determines the map $\Psi$.
\end{proof}

Next, consider the case when the motion is in the interior of the disk $D$. Consider the trajectory with  the initial condition $(\mathbf{x}(0),\mathbf{p}(0))\in\Delta_-$ and the negative energy $H(\mathbf{x}(0),\mathbf{p}(0))<0$.
 Then, as it has been explained  above, $\mathcal P(\mathbf{x}_0,\mathbf p_0)$ and the boundary circle  $\partial D$ have exactly two intersection points  in the case of the parabola, or one intersection point in the case of the vertical ray. So, the map $\Phi$ is  $C^{\infty}$-smooth  at $(\mathbf{x}_0,\mathbf p_0)$.

\begin{thm}\label{th:druga}
Suppose that $(\mathbf{x}_0,\mathbf{p}_0)\in \Delta_-$ satisfies the negative energy condition $H=h<0$. The map $
\Phi:(\mathbf{x_0}, \mathbf{p_0})\mapsto(\mathbf{x_1}, \mathbf{p_1})
$ is given by
\begin{align}\label{eq:Fi}
\mathbf{x_1}&=x_{10}+p_{10}\tau+\i\Big(x_{20}+p_{20}\tau-\frac{1}{2}g\tau^2\Big),\\
\mathbf{p_1}&=p_{10}+\i(p_{20}-g\tau), \qquad \tau=\sqrt[3]{v_1}+\sqrt[3]{v_2}+\frac{4p_{20}}{3g},
\end{align}
 where
\begin{align}
\label{wp12} & v_{1,2}=-\frac{q_p}{2}\pm\frac{8\sqrt{-\mathcal D_2}}{3\sqrt{3}g^4}\\
\label{D1}&\mathcal D_2=\frac{64}{27g^4}\Big(g^2p_{20}^2-\frac{3}{4}g^2A\Big)^3-\frac{27g^8q_p}{64},\\
&q_p=\frac{32}{27g^6}\Big(-2g^3p_{20}^3+\frac{9}{4}Ag^3p_{20}+\frac{27}{16}g^4B\Big),\\
\label{ab}& A=p_{10}^2+p_{20}^2-x_{20}g, \quad  B=2x_{10}p_{10}+2x_{20}p_{20}.
\end{align}
\end{thm}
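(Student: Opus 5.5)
The plan mirrors the proof of Theorem \ref{th:prva}, with the parabola \eqref{unutar} in place of the ellipse. Starting from $(\mathbf x_0,\mathbf p_0)\in\Delta_-$, the motion inside $D$ is $x_1(t)=x_{10}+p_{10}t$, $x_2(t)=x_{20}+p_{20}t-\frac12 gt^2$. We look for the first $\tau>0$ with $x_1(\tau)^2+x_2(\tau)^2=\ell^2$. Substituting and using $x_{10}^2+x_{20}^2=\ell^2$, the equation becomes
\[
\tau\Big(\tfrac14 g^2\tau^3-gp_{20}\tau^2+(p_{10}^2+p_{20}^2-gx_{20})\tau+2(x_{10}p_{10}+x_{20}p_{20})\Big)=0,
\]
which is $\tau\big(\frac{g^2}{4}\tau^3-gp_{20}\tau^2+A\tau+B\big)=0$ with $A,B$ as in \eqref{ab}. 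The root $\tau=0$ is the initial point, so $\tau$ is a root of the cubic $Q(\tau)=\tau^3-\frac{4p_{20}}{g}\tau^2+\frac{4A}{g^2}\tau+\frac{4B}{g^2}$.

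Next I would show that $Q$ has exactly one real root and that this root is positive.
\begin{itemize}
\item Since $h<0$ forces $x_2<0$ along the whole parabola, the parabola is not tangent to $\partial D$ and meets it in exactly two points, as explained before the statement. In the degenerate case of a vertical ray it meets $\partial D$ in one point. Real roots of $Q$ correspond to intersection times other than $t=0$.
\item A parabola has one time of passage through each of its points, so two geometric intersection points give exactly one nonzero real root of $Q$.
\item That root is positive. Because $\mathbf p_0$ points inward ($B=2\langle x_0,p_0\rangle<0$), the trajectory enters $D$ and must exit later, since $x_2(t)\to-\infty$.
\item Hence the discriminant of $Q$ is negative, it has no multiple roots, and the other two roots are complex conjugates.
\end{itemize}

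Then I apply Cardano's formula. The shift $\tau=y+\frac{4p_{20}}{3g}$ removes the quadratic term and gives a depressed cubic $y^3+p_dy+q_p=0$. Here $p_d=\frac{4A}{g^2}-\frac{16p_{20}^2}{3g^2}$, proportional to $(g^2p_{20}^2-\frac34 g^2A)$, and $q_p$ is obtained by expanding. The real root is $y=\sqrt[3]{v_1}+\sqrt[3]{v_2}$ with $v_{1,2}=-\frac{q_p}{2}\pm\sqrt{\frac{q_p^2}{4}+\frac{p_d^3}{27}}$. I would rewrite the radicand through $\mathcal D_2$ so that it matches the normalization in \eqref{wp12}--\eqref{D1}; the negativity of the discriminant from the previous step makes $\sqrt{-\mathcal D_2}$ real. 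Substituting $\tau$ into \eqref{realnaunutar} and into $\mathbf p(t)=p_{10}+\i(p_{20}-gt)$ gives $(\mathbf x_1,\mathbf p_1)$. Smoothness of $\Phi$ follows because a simple real root of a cubic depends smoothly on its coefficients.

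The main obstacle is the bookkeeping of constants: checking that the paper's $q_p$ and $\mathcal D_2$, with their particular powers of $g$, are exactly the Cardano quantities for $Q$ after the shift. I would verify this by direct expansion. The geometric counting step should also handle the vertical-ray case ($p_{10}=0$), where the single nonzero root is still given by the same formula.
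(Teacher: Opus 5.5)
Your route is the paper's route: factor $\tau$ out of $|x(\tau)|^2=\ell^2$, use $h<0$ to show that $\frac{g^2}{4}\tau^3-gp_{20}\tau^2+A\tau+B$ has exactly one real root, which is positive, and then apply Cardano after the shift $\tau=y+\frac{4p_{20}}{3g}$. Your root counting, which uses $B=2\langle x_0,p_0\rangle<0$, non-tangency and $x_2(t)\to-\infty$, is more careful than the paper's. Your cubic is also correct where the paper's proof misprints $\tau^3$ for $\tau^2$.

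The step you postpone as ``bookkeeping'' does not succeed, because the printed constants are not the Cardano quantities. You call the constant term of the depressed cubic $q_p$ and identify it with the paper's $q_p$, but they are different. Write the depressed monic cubic as $y^3+Py+Q=0$ with
\[ P=\frac{4A}{g^2}-\frac{16p_{20}^2}{3g^2},\qquad Q=-\frac{128p_{20}^3}{27g^3}+\frac{16Ap_{20}}{3g^3}+\frac{4B}{g^2}. \]
Expanding the paper's $q_p$ gives exactly $Q/2$.

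\begin{itemize}
\item Cardano therefore gives $v_{1,2}=-q_p\pm\sqrt{q_p^2+P^3/27}$ in the paper's notation, not $-\tfrac{q_p}{2}\pm\cdots$.
\item $\mathcal D_2$ must contain $q_p^2$ rather than $q_p$; as printed it is not even homogeneous in the physical units.
\item After that correction, $\mathcal D_2=-\frac{27g^8}{64}\bigl(q_p^2+\frac{P^3}{27}\bigr)$. This is exactly the discriminant of the cubic, so the paper's relation $\mathcal D(Q_3)=-\frac{g^2}{16}\mathcal D_2$ is also off.
\item One also has $\sqrt{q_p^2+P^3/27}=\frac{8\sqrt{-\mathcal D_2}}{3\sqrt3\,g^4}$, so the radical term in $v_{1,2}$ is right.
\end{itemize}

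Here is a concrete check satisfying all hypotheses. Take $g=2$, $x_{10}=-2$, $x_{20}=-\frac38$, $p_{10}=\frac12$, $p_{20}=0$ and $\ell^2=\frac{265}{64}$. Then $h=-\frac58$, $\langle x_0,p_0\rangle=-1$, $A=1$, $B=-2$ and $q_p=-1$. The cubic is $\tau^3+\tau-2$, and the true exit time is $\tau=1$; indeed $x(1)=(-\frac32,-\frac{11}{8})\in\partial D$.
\begin{itemize}
\item The corrected formula returns $1$.
\item The printed $v_{1,2}$ returns about $0.35$, even with $q_p^2$ in $\mathcal D_2$.
\item With $q_p$ unsquared, $\mathcal D_2=104>0$, so $\sqrt{-\mathcal D_2}$ is not real.
\end{itemize}

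Your argument therefore proves the theorem only in the corrected form
\[ v_{1,2}=-q_p\pm\frac{8\sqrt{-\mathcal D_2}}{3\sqrt3\,g^4},\qquad \mathcal D_2=\frac{64}{27g^4}\Bigl(g^2p_{20}^2-\frac34g^2A\Bigr)^3-\frac{27g^8q_p^2}{64}. \]
You should carry out the Cardano computation and state these expressions yourself. Do not promise a match with the printed normalization, because that match does not exist.
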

\begin{proof}
Since both points $\mathbf{x}_0$ and $\mathbf{x}_1$ belong to $\partial D$, the time $\tau$ that determines  the intersection point of the parabola and the boundary circle is a solution of the equation
\begin{equation}\label{cubic}
(x_{10}+p_{10}\tau)^2+\Big(x_{20}+p_{20}\tau-\frac12g\tau^2\Big)^2=x_{10}^2+x_{20}^2=\ell^2.
\end{equation}
One solution $\tau=0$ corresponds to the initial point. The equation \eqref{cubic} reduces to
$$
Q_3(\tau):=\frac{g^2}{4}\tau^3-gp_{20}\tau^3+A\tau+B=0,
$$
where $A$ and $B$ are given by \eqref{ab}.
The discriminant of this cubic polynomial is
\[
\mathcal{D}(Q_3)=-\frac{g^2}{16}\mathcal D_2,
\]
with $\mathcal D_2$ given by \eqref{D1}.

Since the energy in negative,  $\mathcal P(\mathbf{x}_0,\mathbf p_0)$ and the boundary circle  $\partial D$ have exactly two intersection points in the case of the parabola, or one intersection point in the case of the vertical ray.   The complex conjugate solutions  are
\[
\tau_1=\bar{\varepsilon}_1\sqrt[3]{v_1}+\varepsilon_1\sqrt[3]{v_2}+\frac{4p_{20}}{3g}, \qquad \tau_2=\bar{\tau}_1,
\]
where $\varepsilon_1=(-1+\i\sqrt{3})/2$  and $v_{1,2}$ are given by \eqref{wp12}.
\end{proof}

Consider the isoenergetic level sets
\begin{equation*}\label{fiksirana}
M_{h}^\pm=\Big\{H=\frac12\langle p,p\rangle+gx_2=h\Big\}\cap \Delta_\pm.
\end{equation*}

Since the energy is conserved,  for $h<-\rho\ell^2$, the  restrictions of the
return maps are well defined
\[
\Theta_+^h=\Phi\circ\Psi\vert_{M_h^+}\colon M_{h}^+ \to M_{h}^+, \quad \Theta_-^h=\Psi\circ\Phi\vert_{M_h^-}\colon M_{h}^-\to M_{h}^-,
\]
and they are $C^{\infty}$-smooth.

As in the case $g=0$, the canonical symplectic form $\omega=dp_1\wedge dx_1+dp_2\wedge dx_2$ induces
the volume forms $\Omega_+$ and $\Omega_-$ on $M_h^+$ and $M_h^-$, respectively, that are preserved by $\Theta_\pm^h$.

We derived the formulae for the map  $\Psi$ for $h<-\ell^2\rho$ in Theorem \ref{th:prva} and  the map $\Phi$ for $h<0$ in Theorem \ref{th:druga}. As a result,
we obtain  explicit algebraic forms of the return maps $\Theta_+^h$ and $\Theta_-^h$ for $h<-\ell^2\rho$ (see Figure \ref{fig:haos}).
\begin{figure}[h]
{\centering{\includegraphics[width=8cm]{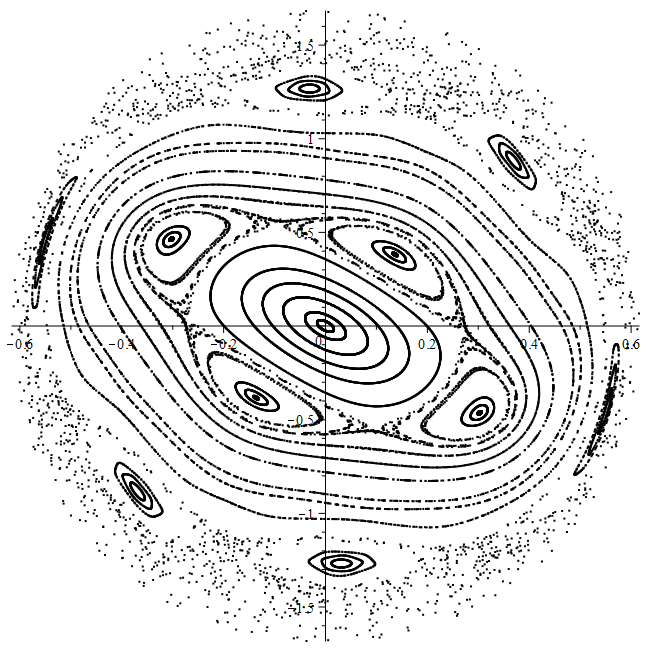}}
\caption{\label{fig:haos}  The trajectories of the return map $\Theta_+^h$ in the coordinates $x_1$ (abscissa) and $S$ (ordinate,  see Remark \ref{S=s} to recall the definition of $S$) for $\ell=1$, $g=7$, $\rho=5$, $H=h=-5.5$ after 1000 iterations.}}
\end{figure}
To illustrate the dynamics,  we present the trajectories of the  system, in the case $g=7, \rho=5, \ell=1$,  for several initial conditions
with the fixed value of the Hamiltonian $H=h=-5.5$ satisfying the condition \eqref{uslov:energija} (see Figure \ref{fig:razne}).
Inside the circle $\partial D$ are  blue arcs of parabolas, while outside the circle are  red arcs of ellipses.

\begin{rem}
 Note that we can apply the formulae for $\Psi$ and $\Phi$ given in Theorems \ref{th:prva} and \ref{th:druga} whenever there are up to two intersections of ellipses and parabolas with the boundary circle $\partial D$, regardless of the energy conditions. In Figure \ref{fig:lepa} (left), we have a trajectory with the same parameters
 $g=7, \rho=5, \ell=1$, but for the value of the Hamiltonian $H=h \approx 12$ that does not satisfy the condition \eqref{uslov:energija}. After 21 iterations of the map $\Theta_+^h$, the ellipse  intersects $\partial D$ in four points and Theorem \ref{th:prva} cannot be applied any more  to determine the next iteration.
\end{rem}
\begin{figure}[h]
{\centering{\includegraphics[width=13.5cm]{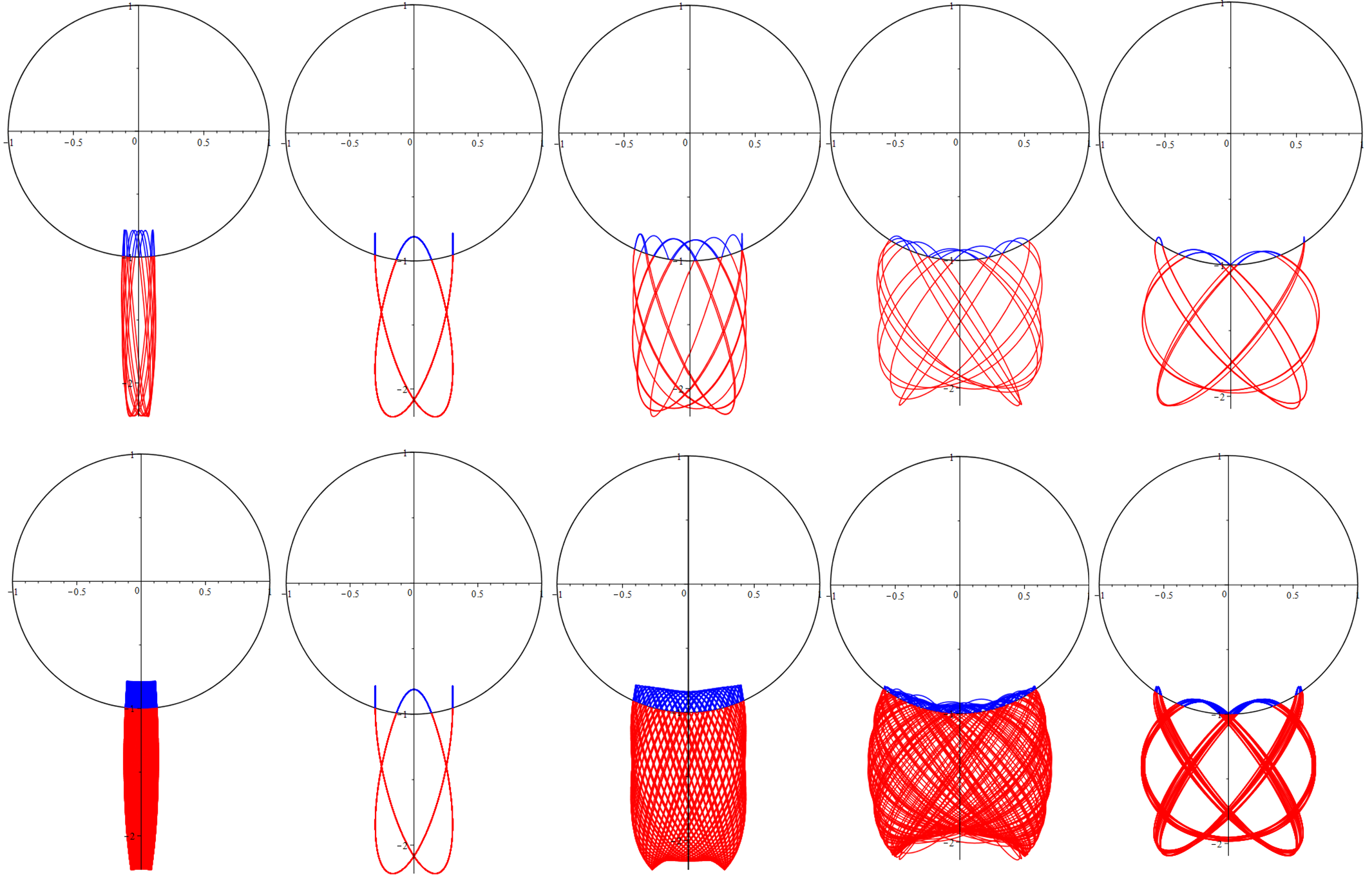}}
\caption{Five trajectories of the system  with the $C^0$-Hamiltonian \eqref{eq:ham2}, that correspond to trajectories of the return map $\Theta_+$ given in Figure \ref{fig:haos}, respectively from the left to the right:
an invariant  circle near the equilibrium at the origin; a $4$-periodic trajectory within ``four islands"; an invariant cycle away from the origin and before a claud trajectory;  a claud trajectory; a trajectory that corresponds to ``six islands". In the first  row, each case presents ten iterations, and in the second row one hundred iterations of  $\Theta_+$.
\label{fig:razne}}}
\end{figure}

\subsection{$C^0$-gluing of two non-commutatively integrable systems}

The $C^0$-approximative system can be seen as a $C^0$-gluing of two non-commutatively integrable, aka super-integrable, systems: the motion in a homogeneous gravitational field in $\R^2$ with noncompact invariant manifolds and the motion under the influence of the Hook potential centered at $C(0,-{g}/{\rho})$ with compact invariant manifolds over the boundary $\Delta$.
The first system has a complete sets of integrals
\[
F_1=p_1p_2+gx_1,\quad F_2=\frac12 p_2^2+gx_2,\quad F_3=p_1,\quad  F_4=2p_1(x_1p_2-x_2p_1)+gx_1^2
\]
(the Hamiltonian is $H_1=F_2+\frac12F_3^2$) and the second one has the complete set of integrals
\begin{equation}\label{eq:Gi}
\begin{gathered}
G_1=p_1p_2+gx_1+\rho x_1x_2, \qquad G_2=\frac12p_1^2+\frac12\rho x_1^2,\\
G_3=\frac12p_2^2+\frac12\rho x_2^2+gx_2, \qquad  G_4=\rho(x_1p_2-x_2p_1)-{g}p_1
\end{gathered}
\end{equation}
(the Hamiltonian is $H_2=G_2+G_3-\frac{\rho}2\ell^2$),
with the Poisson brackets:
\begin{align*}
 & \{F_1,F_2\}=-gF_3, \quad \{F_1,F_3\}=g, \quad \{F_1,F_4\}=-4F_2F_3+2F_3^2,\\
 &\{F_2,F_3\}=0,\quad \{F_2,F_4\}=2F_1F_3, \quad \{F_3,F_4\}=-2F_1,\\
 & \{G_1,G_2\}=-G_4, \quad \{G_1,G_3\}=G_4, \quad
  \{G_1,G_4\}=2\rho G_3-2\rho G_2-{g^2},\\
 &\{G_2,G_3\}=0,\quad \{G_2,G_4\}=-\rho G_1, \quad \{G_3,G_4\}=\rho G_1.
\end{align*}

The $C^0$-approximative system is defined by the Hamiltonian $H=H_1=F_2+\frac12F_3^2$ for $x_1^2+x_2^2\le \ell^2$ and  by the Hamiltonian
$H=H_2=G_2+G_3-\frac{\rho}2\ell^2$ for $x_1^2+x_2^2\ge \ell^2$, such that $H_1\vert_\Delta=H_2\vert_\Delta$.
If we can construct another $C^\infty$-smooth functions $K_1(F_1,F_2,F_3,F_4)$ and
$K_2(G_1,G_2,G_3,G_4)$,  such that
\[
K_1(F_1,F_2,F_3,F_4)\vert_\Delta=K_2(G_1,G_2,G_3,G_4)\vert_\Delta,
\]
then the function
\begin{equation*}  K(x,p)=
\begin{cases}
        K_1(x,p), & \text{for}\quad   x_1^2+x_2^2\le\ell^2,\\
        K_2(x,p), & \text{for}\quad  x_1^2+x_2^2\ge\ell^2
    \end{cases}
\end{equation*}
would be a $C^0$-integral of our system. As a result, the set $\Delta$ would be foliated on invariant curves of our system,
given by the equations $\{H=h, K=k\}$.  However, although there are a lot of invariant circles, Figure \ref{fig:haos} indicates that this is not the case.

\subsection{Lyapunov stability of an elliptic fixed point of the return maps}

As the coordinates on $M_h^\pm$, we take the area integral $S=p_\varphi$ and the translation of the angular variable $Y=\varphi-\frac{\pi}2$ (see \eqref{polarne} and use the inequality $x_2<0$ for $H=h<0$):
\begin{equation}\label{Lambda}
\Lambda_\pm\colon M_h^\pm \longrightarrow \R^2, \qquad (S,Y)=\Big(x_1p_2-x_2p_1,\arctan\Big(-\frac{x_1}{x_2}\Big)\Big).
\end{equation}

Vice verse, on $M_h^\pm$ we have
\begin{align}
\label{xovi}& x_1=\ell\sin Y, \qquad x_2=-\ell\cos Y.
\end{align}

Further, from the first integrals $S$, $H$, and inequalities $\langle x,p\rangle \gtrless 0$ on $M_h^\pm$, we obtain
\begin{align}
\label{veza1} & p_1\ell\cos Y+p_2\ell\sin Y=S \quad \Rightarrow \quad p_1=\frac{S}{\ell\cos Y}-p_2\tan Y,\\
\label{veza2} & p_2^2(1+\tan^2 Y)-2\frac{S\sin Y}{\ell\cos^2 Y}p_2+\Big(\frac{S^2}{\ell^2\cos^2 Y}-2h-2g\ell\cos Y\Big)=0,\\
\label{veza3} & \ell\sin Y\Big(\frac{S}{\ell\cos Y}-p_2\tan Y\Big)-p_2\ell\cos Y=S\tan Y- \frac{\ell}{\cos Y}p_2   \gtrless 0.
\end{align}

The solutions of the quadratic equation \eqref{veza2} are
\begin{align*}
&p_2^+=\frac{S\sin Y}{\ell}- \cos Y\sqrt{(2h+2g\ell\cos Y)-\frac{S^2}{\ell^2}},\\
&p_2^-=\frac{S\sin Y}{\ell}+ \cos Y\sqrt{(2h+2g\ell\cos Y)-\frac{S^2}{\ell^2}}.
\end{align*}

Due to the  inequalities \eqref{veza3}, we get the mappings $\lambda_\pm$ inverse to \eqref{Lambda} on the images $\Lambda_\pm(M_H^\pm)$, that is, the parametrization of $M_h^+$ and $M_h^-$ in variables $(S,Y)$ given by \eqref{xovi} and
 \begin{align*}
&p_2=p_2^+, \qquad p_1=\frac{S}{\ell\cos Y}-p_2^+\tan Y,\\
&p_2=p_2^-, \qquad p_1=\frac{S}{\ell\cos Y}-p_2^-\tan Y,
\end{align*}
respectively. The symplectic form $\omega$ (see \eqref{kanonska}) restricted to $M_h^+$ and $M_h^-$ takes the standard form:
\[
\lambda^*_\pm\omega=dS \wedge dY.
\]

The return maps $\Theta_+^h$ and $\Theta_-^h$, in the coordinates $(S,Y)$ are given by
\begin{equation}\label{eq:Theta+-}
\Theta_+^h=\Lambda_+\circ \Phi\circ \Psi \circ \lambda_+, \qquad \Theta_-^h=\Lambda_-\circ \Psi\circ \Phi \circ \lambda_-.
\end{equation}

We will consider the return map $\Theta_+^h$, while $\Theta_-^h$ can be treated in the same way. The point $(0,0)$ is a fixed point of the map:
\begin{equation}\label{mapseries}
\begin{aligned}
(0,0)&\longmapsto (0,-\ell,0,-\sqrt{2h+2g\ell}) \longmapsto (0,-\ell,0,\sqrt{2h+2g\ell}) \\
&\longmapsto (0,-\ell,0,-\sqrt{2h+2g\ell}) \longmapsto (0,0).
\end{aligned}
\end{equation}

\begin{lem}
The differentials $d\lambda_+$ and  $d\Lambda_+$ are given by the formulae
\begin{align}
&d\lambda_+ =
\small
\left(
\begin{array}{cc}\label{dvadifer}
0 & \ell \cos(Y) \\
0 & \ell \sin(Y) \\
\frac{\cos Y\ell G-S\sin Y}{\ell^2
G} & -\frac{S \ell\sin Y
G  + g\ell^3 - 3 \cos^2 Y g\ell^3+
(S^2- 2h\ell^2) \cos Y +g\ell^3}{\ell^2 G} \\
\frac{\sin Y \ell G + S \cos Y }{\ell^2 G}
& \frac{S \cos Y G \ell +
\sin Y ( 2h\ell^2 + 3 \cos Y  g\ell^3  - S^2)}{\ell^2 G}
\end{array}
\right),\\
&d\Lambda_+ =
\small
\left(
\begin{array}{cccc}\label{dvadifer1}
p_2 & -p_1 & -x_2 & x_1 \\
-\frac{x_2}{\ell^2}   &  \frac{x_1}{\ell^2}   &   0  &  0
\end{array}
\right),
\end{align}
where $G=\sqrt{2h+2g\ell\cos Y-\frac{S^2}{\ell^2}}$.
\end{lem}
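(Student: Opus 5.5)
The plan is direct differentiation of the explicit maps $\lambda_+$ and $\Lambda_+$ written above. For $d\lambda_+$, regard $\lambda_+\colon(S,Y)\mapsto(x_1,x_2,p_1,p_2)$ as given by \eqref{xovi} together with the branch $p_2=p_2^+$ and $p_1=S/(\ell\cos Y)-p_2^+\tan Y$. Write $p_2^+=\frac{S\sin Y}{\ell}-\cos Y\, G$ with $G=\sqrt{2h+2g\ell\cos Y-S^2/\ell^2}$. The first two rows are immediate. Since $x_1=\ell\sin Y$ and $x_2=-\ell\cos Y$ do not depend on $S$, the first column starts with two zeros, and $\partial_Y$ gives $\ell\cos Y$ and $\ell\sin Y$. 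The matrix is then filled column by column, with rows indexed by $(x_1,x_2,p_1,p_2)$ and columns by $(S,Y)$.

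For the momentum rows we need the two partial derivatives of $G$. These are $\partial_S G=-S/(\ell^2 G)$ and $\partial_Y G=-g\ell\sin Y/G$. From them we get $\partial_S p_2^+=\frac{\sin Y}{\ell}+\frac{S\cos Y}{\ell^2G}$, which is the $(4,1)$ entry after putting everything over the common denominator $\ell^2 G$. The $\partial_Y p_2^+$ entry follows by the product rule. We then replace the remaining $G^2$ in the numerator by $2h+2g\ell\cos Y-S^2/\ell^2$, which produces the terms $2h\ell^2$, $3g\ell^3\cos Y$ and $-S^2$ in the numerator.

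For $p_1$ it is cleaner to avoid differentiating $p_2^+\tan Y$ directly. Substituting $p_2^+$ gives the simplified form
\[
p_1=\frac{S}{\ell\cos Y}-\frac{S\sin^2 Y}{\ell\cos Y}+\sin Y\, G=\frac{S\cos Y}{\ell}+\sin Y\,G .
\]
This removes the apparent singularities in $\cos Y$. Differentiating gives $\partial_S p_1=\frac{\cos Y}{\ell}-\frac{S\sin Y}{\ell^2 G}$, which matches the $(3,1)$ entry. It also gives $\partial_Y p_1=-\frac{S\sin Y}{\ell}+\cos Y\,G-\frac{g\ell\sin^2Y}{G}$. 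Over the denominator $\ell^2G$, we use $\sin^2Y=1-\cos^2Y$ and again replace $G^2$, so that the $g\ell^3$ and $\cos^2 Y$ terms combine into the stated numerator.

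For $d\Lambda_+$ we differentiate the map $(S,Y)=(x_1p_2-x_2p_1,\arctan(-x_1/x_2))$ with respect to $(x_1,x_2,p_1,p_2)$. The first row $(p_2,-p_1,-x_2,x_1)$ is immediate. For the second row, the derivative of $\arctan(-x_1/x_2)$ is $(-x_2\,dx_1+x_1\,dx_2)/(x_1^2+x_2^2)$. On $M_h^+\subset\Delta$ we have $x_1^2+x_2^2=\ell^2$, which gives the entries $-x_2/\ell^2$ and $x_1/\ell^2$.

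The only real obstacle is bookkeeping: the $(3,2)$ and $(4,2)$ entries must be brought into exactly the printed form. I would check them by verifying the symplectic condition
\[
\det\bigl(d\Lambda_+\circ d\lambda_+\bigr)=1,\qquad\text{indeed } d\Lambda_+\circ d\lambda_+=\mathrm{Id},
\]
since $\Lambda_+\circ\lambda_+=\mathrm{id}$. This identity serves as a consistency check on all entries at once, and at the fixed point $(0,0)$ it can be verified by hand.
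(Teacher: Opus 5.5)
Your route is the paper's (it only says the lemma follows by direct calculation), and every derivative you actually write down is correct. That includes $\partial_S G$, $\partial_Y G$, the simplification $p_1=\frac{S\cos Y}{\ell}+\sin Y\,G$, the whole $S$-column, the $(4,2)$ entry, and all of $d\Lambda_+$. The step that fails is the last one for the $(3,2)$ entry. There you assert, without carrying it out, that the terms ``combine into the stated numerator''; they do not. Multiply your correct $\partial_Y p_1=-\frac{S\sin Y}{\ell}+\cos Y\,G-\frac{g\ell\sin^2Y}{G}$ by $\ell^2G$, and use $\ell^2G^2=2h\ell^2+2g\ell^3\cos Y-S^2$ and $\sin^2Y=1-\cos^2Y$. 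This gives
\[
\frac{\partial p_1}{\partial Y}=-\frac{S\ell\sin Y\,G+g\ell^3-3g\ell^3\cos^2Y+(S^2-2h\ell^2)\cos Y}{\ell^2G},
\]
with a single $g\ell^3$, whereas the printed numerator contains $g\ell^3$ twice.

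This is not cosmetic. At $(S,Y)=(0,0)$ the correct entry is $G=\sqrt{2h+2g\ell}$, as is obvious from $p_1=\sin Y\,G$ when $S=0$. The printed entry instead gives $(2h+g\ell)/G$. The paper's own Jacobian \eqref{jacobi} is reproduced from its $d\Psi$ and $d\Phi$ only with the value $G=\sqrt3$ (for $\ell=1$, $g=7$, $h=-5.5$). The printed value $-4/\sqrt3$ would give a matrix of trace about $2.38$, i.e.\ a hyperbolic point. So the displayed $(3,2)$ entry is a typo, and what your computation proves is the corrected lemma. You should say so rather than claim agreement with the printed formula.

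Your proposed safeguard would not have caught this in the form you lead with. Since $x$ does not depend on $S$, the $(2,1)$ entry of $d\Lambda_+\,d\lambda_+$ vanishes identically, and the $(2,2)$ entry is $\cos^2Y+\sin^2Y=1$. Hence $\det(d\Lambda_+\,d\lambda_+)$ equals the $(1,1)$ entry and tests only the $S$-column; it is blind to the $(3,2)$ and $(4,2)$ entries. (It is also not ``the symplectic condition'', which is $\lambda_+^*\omega=dS\wedge dY$.) The full identity $d\Lambda_+\,d\lambda_+=\mathrm{Id}$ does detect the error. Its $(1,2)$ entry is $\partial_Y(S\circ\lambda_+)=\ell\cos Y\,(p_2+\partial_Yp_1)+\ell\sin Y\,(\partial_Yp_2-p_1)$. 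This vanishes with the corrected entry but equals $-g\ell^2\cos Y/G\neq 0$ with the printed one, so actually running that check would have exposed the discrepancy.
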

A proof follows by direct calculations.

\begin{exm}\label{ex:stability} The  formulae for the differentials $d\Psi$ and $d\Phi$ are quite complicated.
In order to simplify the calculations,  as in previous examples in  Section \ref{sec3}, let us fix the parameters of the system \eqref{eq:ham2}: $\ell=1, g=7, \rho=5$ and the energy level $h=-5.5$. All calculations are rounded up to three decimal places.
The formulae for the differentials $d\Psi$ and $d\Phi$ become:

$$\begin{aligned}
d\Psi\vert_{(0,-\ell,0,-\sqrt{2h+2g\ell})}&=\left( \begin{matrix}-0.579&0&-0.365&0\\
0&1&0&0\\
1.823&0&-0.579&0\\
0&0&0&-1
\end{matrix}\right),\\
d\Phi\vert_{(0,-\ell,0,\sqrt{2h+2g\ell})}&=\left( \begin{matrix}1&0&0.495&0\\
0&1&0&0\\
0&0&1&0\\
0&0&0&-1
\end{matrix}\right).
\end{aligned}
$$

Using \eqref{dvadifer} and \eqref{dvadifer1}, we get the Jacobian matrix of the map $\Theta^h_+$ from \eqref{eq:Theta+-} at the point $(0,0)$:
\begin{equation}\label{jacobi}
 d\Theta^h_+\vert_{(0,0)}=\left(\begin{matrix}0.549&2.214\\
-0.651&-0.805
\end{matrix}\right),
\end{equation}
with the eigenvalues
$\lambda_{1,2}=-0.128\pm 0.992\,\i$. Since $|\lambda_1|=1$, and $\lambda_1\ne 1$, it follows that the point $(0,0)$ is a nondegenerate elliptic fixed point of the return  map $\Theta^h_+$.

The behaviour of the trajectories presented in the Figure \ref{fig:haos} suggests that the fixed point $(0,0)$ is nonlinearly stable. In order to verify this claim, we will study the nonlinear terms, to prove that the elliptic fixed point is nonresonant of order at least four, and to calculate the Moser twist coefficient. More details about the underlying theory can be found in \cite{Kr, KP, BM, Mo, JZh}. If the Moser twist coefficient is not equal to zero, then the non-resonant fixed point is Lyapunov stable and most of the trajectories in its neighborhood belong to invariant circles \cite{Kr}.

Thus, we calculate the Moser twist coefficient. For the eigenvalues of the matrix \eqref{jacobi}, one gets
 $\lambda_1^2=-0.967-0.254\,\i$, $\lambda_1^3=0.375-0.927\,\i$,
$\lambda_1^4=0.871+0.490\,\i$. All  are different from $1$, hence  $\lambda_1$ is non-resonant up to order four.
The eigenvectors of the matrix \eqref{jacobi} are $\bold{v}_{1}$ and $\bold{v}_{2}$ given by:
\[
\bold{v}_{1}=\bar{\bold{v}}_2=(0.879, -0.269+0.394\,\i)^T
\]

 In the new coordinates
\[
\left(\begin{matrix}\tilde{S}\\ \tilde{Y}\end{matrix}\right)=\left(\begin{matrix}1.138&0\\
0.776&2.539
\end{matrix}\right)\left(\begin{matrix}S\\ Y\end{matrix}\right),
\]
the return map has a formula
\[
\begin{aligned}
\left(\begin{matrix}\tilde{S}_1\\ \tilde{Y}_1\end{matrix}\right)&=\left(\begin{matrix}-0.128&0.992\\
-0.992&-0.128
\end{matrix}\right)\left(\begin{matrix}\tilde{S}\\ \tilde{Y}\end{matrix}\right)+\\
&\left(\begin{matrix} -0.103\tilde{S}^3-0.478\tilde{S}^2\tilde{Y}+0.115\tilde{S}\tilde{Y}^2-0.186\tilde{Y}^3+o( v^3)\\
-0.158\tilde{S}^3+0.286\tilde{S}^2\tilde{Y}-0.846\tilde{S}\tilde{Y}^2+0.709\tilde{Y}^3+o(v^3)
\end{matrix}\right),
\end{aligned}
\]
where $ v=\sqrt{\tilde{S}^2+\tilde{Y}^2}$.
The linear term corresponds to the rotation about the origin, and the quadratic term is equal to zero. The formula for the Moser twist coefficient $\tau$, is given in terms of the coefficients of the return map written in the complex notation $Z=\tilde{S}+\tilde{Y}\,\i$. The formula for $\tau$ from \cite{Mo} is
$$
\tau=\frac{1}{\i}\Big(c_{21}+2|c_{20}|^2\Big[\frac{2\lambda_1+1}{\lambda_1-1}+\frac{1}{\lambda_1^3-1}\Big]\Big),
$$
where $c_{21}$ and $c_{20}$ are the coefficients  from the formula for the return map in the complex coordinates:
$$
Z_1=\lambda_1(Z+c_{20}Z^2+c_{11}Z\bar{Z}+c_{02}\Bar{Z}^2+c_{30}Z^3+c_{21}Z^2\bar{Z}+c_{12}Z\bar{Z}^2+c_{03}\bar{Z}^3)+o(r^3).
$$
Since the return map is given in the form
$$
\begin{aligned}
S_1&=\Re(\lambda_1) S-\Im(\lambda_1) Y+a_{20}S^2+a_{11}SY+a_{02}Y^2\\
&+a_{30}S^3+a_{21}S^2Y+a_{12}SY^2+a_{03}Y^3+o(r^3),\\
Y_1&=\Im(\lambda_1) S+\Re(\lambda_1) Y+b_{20}S^2+b_{11}SY+b_{02}Y^2\\
&+b_{30}S^3+b_{21}S^2Y+b_{12}SY^2+b_{03}Y^3+o(r^3),\\
\end{aligned}
$$
we calculate
$$
\begin{aligned}
c_{21}&=\frac{1}{8\lambda_1}\Big(3a_{30}+b_{21}+a_{12}+3b_{03}+(3b_{30}-a_{21}+b_{12}-3a_{03})\i\Big),\\
c_{20}&=\frac{1}{4\lambda_1}\Big(a_{20}-a_{02}+b_{11}+(b_{20}-b_{02}-a_{11})\i\Big).
\end{aligned}
$$
Using that the quadratic term is zero, the formula for $\tau$ reduces to
$$
\tau=\frac{1}{\i}c_{21}.
$$
We get $\tau=0.280$. Thus, the origin as a fixed point is indeed Lyapunov nonlinearly stable.
\end{exm}

\section{$C^0$-system without the gravitational field}\label{sec4}

As in Section \ref{sec2}, it is beneficiary to describe the case  of the $C^0$-approximative system without the gravitational field.
 As a result, we obtain a $C^0$-gluing of two super-integrable systems with first integrals \eqref{eq:Fi} and \eqref{eq:Gi} (by setting $g=0$ in \eqref{eq:Gi}), which has a remarkable possibility of applying the Liouville-Arnol'd theorem.

Set $g=0$ in the Hamiltonian for the $C^0$-approximation:
\begin{equation}\label{eq:h3}
H(x,p)=\frac12 \langle p,p\rangle+\frac12\rho(\vert x\vert^2-{\ell^2})\theta(\vert x\vert-\ell).
\end{equation}

Now, we can define the dynamics for all the initial conditions.

Namely, consider an initial condition $(x(0),p(0))=(x_0,p_0)\in \Delta_0$: $\langle p_0,x_0\rangle=0$, $\langle x_0,x_0\rangle={\ell^2}$.
If $\langle p_0,p_0\rangle={\ell^2}\rho$, then the solution $x(t)$ of \eqref{ham2g} with $g=0$ is the uniform circular motion along $\partial D$, see \eqref{kruznice} for $\lambda^2=\rho$, according to the rule \textbf{R1} for $g=0$. For $\langle p_0,p_0\rangle>{\ell^2}\rho$, the solution $x(t)$ is the motion along the ellipse centered at $O(0,0)$ that is tangent to $\partial D$ at $x_0$ and $-x_0$, according to the rule \textbf{R1} for $g=0$. For $p_0=0$, the trajectory is the equilibrium position, according to the rule \textbf{R2} for $g=0$. However, for
$0<\langle p_0,p_0\rangle<{\ell^2}\rho$,
the initial position $(x_0,p_0)$ belongs to $\Pi$: neither equations \eqref{ham1g} nor equations \eqref{ham2g} can be utilized to describe the motion.  Instead, by definition, we extend the dynamics to the entire interval
\[
0 \le \langle p_0,p_0\rangle \le {\ell^2}\rho,
\]
and define that the trajectory
is again the uniform motion along the circle $\partial D$, in the complex notation:
\begin{equation}\label{kruznice}
\mathbf x(t)=\mathbf x_0 e^{\i\lambda t}, \quad \mathbf p(t)=\mathbf p_0e^{\i\lambda t}, \quad \mathbf p_0=\i\lambda\mathbf x_0, \quad 0<\lambda^2\le \rho.
\end{equation}

The Hamiltonian function $H$ and the area integral $S$
remain to be preserved along the motion and we can use the polar coordinates to integrate the Hamiltonian system defined by \eqref{eq:h3} by quadratures. The complete system can be integrated in the same way as a $C^1$-smooth system. All ``$C^1$-statements" should be replaced by ``$C^0$-statements", while ``$C^{\infty}$-smooth statements" remain ``$C^{\infty}$-smooth statements".

Here we need to take into account that the
minimum of the effective potential,
\begin{equation}\label{C0efektivni}
 V_s(r)=\begin{cases}
        \frac{1}2\frac{s^2}{r^2}, & \text{for}\quad 0<r\le\ell,\\
        \frac{1}2\frac{s^2}{r^2}+\frac{1}2\rho r^2-\frac{1}2\rho {\ell^2}, & \text{for}\quad  r\ge\ell,
    \end{cases}
\end{equation}
for $\vert s\vert \le \sqrt{\rho}{\ell^2}$, is at the point of non-smoothness $r_s^*=\ell$  (see Figure \ref{fig:potencijali}). The corresponding trajectories in the non-reduced phase space are given by \eqref{kruznice}. For $\vert s\vert > \sqrt{\rho}{\ell^2}$, the minimum $r^*_s$ is the solution of the equation
$\rho (r^*_s)^4=s^2$, that is, $r^*_s=\rho^{-1/4}\sqrt{\vert s\vert}>\ell$. It  corresponds to uniform circular motion of radius $r^*_s$
($\langle p(t),p(t)\rangle=\rho (r^*_s)^2$) satisfying
\[
h=\frac12\langle p(t),p(t)\rangle+\frac12\rho (r^*_s)^2-\frac12 \rho {\ell^2}=\sqrt{\rho}\vert s\vert -\frac12{\rho} {\ell^2}.
\]
\begin{figure}[h]\label{fig3}
{\centering{\includegraphics[width=10cm]{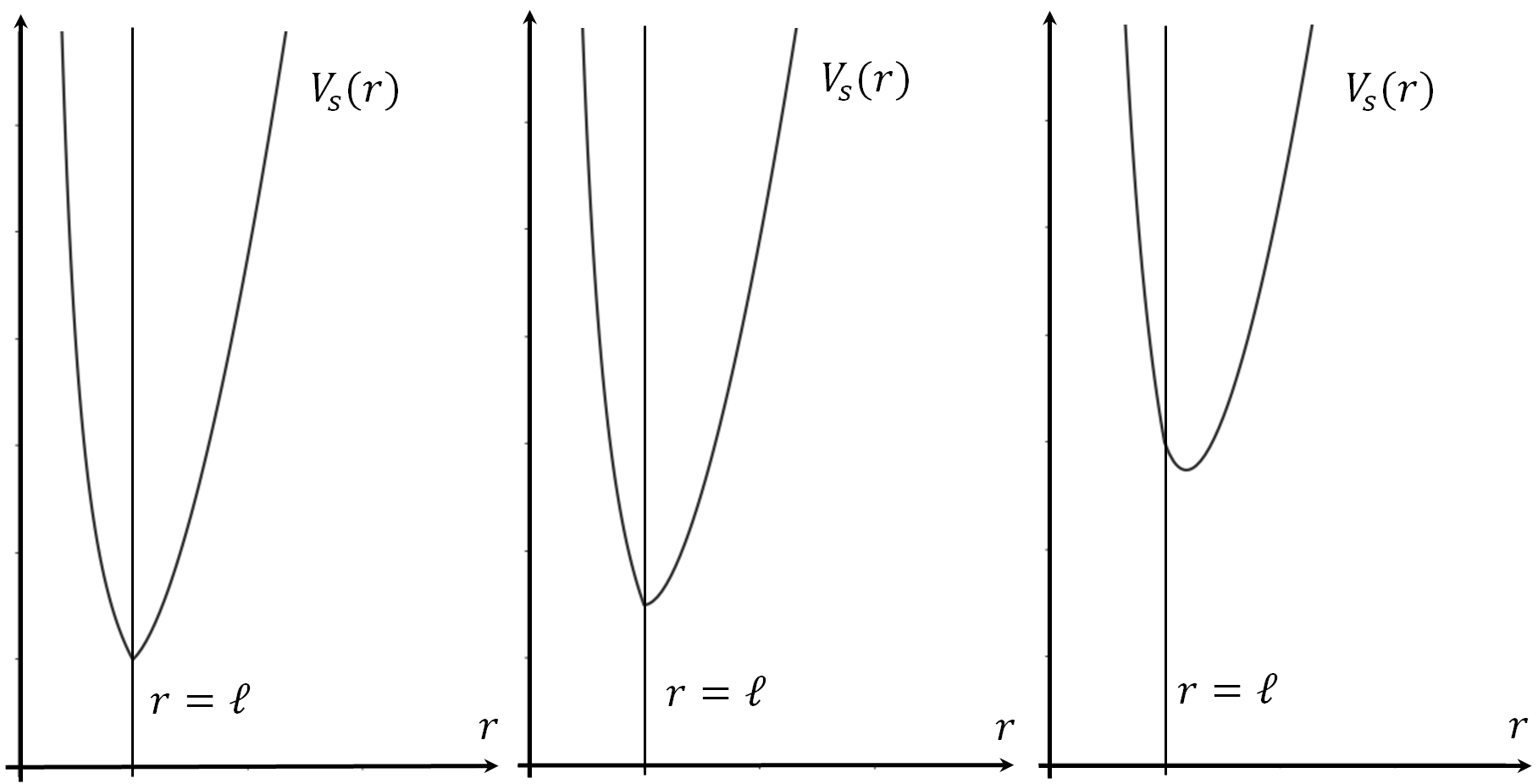}}
\caption{The effective potential for the $C^0$-approximative system, the cases with $\vert s\vert < \sqrt{\rho}{\ell^2}$, $\vert s\vert = \sqrt{\rho}{\ell^2}$,
and $\vert s\vert > \sqrt{\rho}{\ell^2}$, respectively.\label{fig:potencijali}}}
\end{figure}
Therefore,
the set of regular values ${\mathcal Reg}$ is defined by
\[
{\mathcal Reg}=\{(h,s)\,\vert\,  s\ne 0,\,
h> \frac{s^2}{2{\ell^2}} \, \big(\vert s\vert \le \sqrt{\rho}{\ell^2}\big),\,
h> \sqrt{\rho}\vert s\vert -\frac{\rho}2 {\ell^2}\,\big(\vert s\vert \ge \sqrt{\rho}{\ell^2}\big) \}.
\]

We proceed with the description of the return maps $\Theta_\pm$ without restrictions on the value of the energy.
 The return maps are the rotations by the angle $\Delta\varphi_{h,s}$ (see \eqref{prirastaj} for $S=s\ne 0$
and the $C^0$-effective potential \eqref{C0efektivni}, and $\Delta\varphi_{h,0}=0$ for $S=0$). However, now $\Delta\varphi_{h,s}$ will be given in the algebraic form as well.
We use the complex notation, where
\[
S(x,p)=x_1p_2-x_2p_1=\frac{\i}{2}(\mathbf x\bar{\mathbf p}-\mathbf p\bar{\mathbf x}) \quad \text{and}\quad
J(x,p)=\langle x,p\rangle=\frac12(\mathbf x\bar{\mathbf p}+\mathbf p\bar{\mathbf x}).
\]

 As in the $C^1$-approximative case, let $\Sigma_-\subset T^*\R^2$ be the open region consisting of the trajectories with two regimes: $S=0$, or $S\ne 0$ and $V_s(\ell)<h$
(see \eqref{sigma-}). The map $\Phi$ is the same, see \eqref{PhiC1}. For the map $\Psi$, let us consider an initial condition $(x(0),p(0))\in\Delta_+$, i.e. $\vert x(0)\vert =\ell$ and $\langle p(0),x(0)\rangle >0$. As in the case $g>0$, we need to find the first value of time $\tau>0$, such that the solution
\begin{equation}\label{van0}
\mathbf x(t)=\mathbf y_1e^{\i\sqrt{\rho}t}+\mathbf y_2e^{-\i\sqrt{\rho}t}, \quad \mathbf y_1=\frac12\Big(\mathbf x_0-\frac{\i}{\sqrt{\rho}}\mathbf p_0\Big), \quad \mathbf y_2=\frac12\Big(\mathbf x_0+\frac{\i}{\sqrt{\rho}}\mathbf p_0\Big),
\end{equation}
satisfies the equation
\[
\mathbf x(\tau)\bar{\mathbf x}(\tau)={\ell^2}.
\]

Note that since $x(0)$ and $p(0)$ are not orthogonal, both $\mathbf y_1$ and $\mathbf y_2$ are different from zero.

\begin{lem}\label{lema2} The time $\tau$ satisfies the equation:
\[
e^{\i\tau\sqrt{\rho}}=\frac{\mathbf y_2\bar{\mathbf y}_1}{\vert\mathbf y_1\vert\vert{\mathbf y_2}\vert}.
\]
\end{lem}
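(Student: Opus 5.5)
Set $z=e^{\i\sqrt{\rho}\tau}$, so $|z|=1$ and $\bar z=z^{-1}$. Then $\mathbf x(\tau)=\mathbf y_1 z+\mathbf y_2\bar z$ and
\[
\mathbf x(\tau)\bar{\mathbf x}(\tau)=|\mathbf y_1|^2+|\mathbf y_2|^2+\mathbf y_1\bar{\mathbf y}_2 z^2+\bar{\mathbf y}_1\mathbf y_2\bar z^2 .
\]
Setting $g=0$ in Lemma \ref{lem3} gives $b=0$ and $a=\mathbf y_1\bar{\mathbf y}_2$. The reciprocal quartic \eqref{polinom4} then becomes the biquadratic
\[
a z^4-(a+\bar a)z^2+\bar a=0,
\]
which factors as $(z^2-1)(az^2-\bar a)=0$. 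I would check this factorization directly, since it is equivalent to $\mathbf x(\tau)\bar{\mathbf x}(\tau)=\mathbf x(0)\bar{\mathbf x}(0)=\ell^2$.

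The factor $z^2=1$ gives $z=\pm1$, i.e. $\sqrt\rho\tau\in\pi\mathbb Z$. Here $z=1$ is the initial point, and $z=-1$ is the half-period, where the ellipse centered at $O$ passes through $-\mathbf x_0$. The other factor gives $z^2=\bar a/a$. Since $a\neq0$ (both $\mathbf y_j\ne0$, as noted before the lemma), its two solutions are $z=\pm\bar a/|a|=\pm\mathbf y_2\bar{\mathbf y}_1/(|\mathbf y_1||\mathbf y_2|)$. So the candidate angles $\alpha=\sqrt\rho\tau\in(0,2\pi)$ are $\pi$, $\arg(\bar a/|a|)$ and $\arg(\bar a/|a|)+\pi$. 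One must show that the first one after $0$ is the one with the plus sign.

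For this I would write $\mathbf y_1=|\mathbf y_1|e^{\i\mu_1}$ and $\mathbf y_2=|\mathbf y_2|e^{\i\mu_2}$. Then $|\mathbf x(t)|^2=|\mathbf y_1|^2+|\mathbf y_2|^2+2|\mathbf y_1||\mathbf y_2|\cos(2\sqrt\rho t+\mu_1-\mu_2)$, which is a pure cosine in $\beta=2\sqrt\rho t+\mu_1-\mu_2$ with period $\pi/\sqrt\rho$ in $t$. At $t=0$ we have $\beta_0=\mu_1-\mu_2$ and $|\mathbf x|=\ell$. The outgoing condition $\langle x_0,p_0\rangle>0$ means $|\mathbf x|^2$ is increasing, i.e. $\sin\beta_0<0$. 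The next time the cosine returns to the value $\cos\beta_0$ is when $\beta=2\pi-\beta_0$, i.e. $2\sqrt\rho\tau=2\pi-2(\mu_1-\mu_2)$. Since $\beta_0\in(-\pi,0)$ mod $2\pi$, this gives $\sqrt\rho\tau\in(0,\pi)$, and
\[
e^{\i\sqrt\rho\tau}=e^{\i(\mu_2-\mu_1)}=\frac{\mathbf y_2\bar{\mathbf y}_1}{|\mathbf y_1||\mathbf y_2|}.
\]
This also shows the root $z=-1$ (time $\pi/\sqrt\rho$) comes later, so it is not the first return.

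The main obstacle is the sign and branch bookkeeping in this last step. One has to check carefully that $\sin\beta_0<0$ corresponds exactly to the outgoing velocity, by computing $\frac{d}{dt}|\mathbf x|^2$ at $0$ as $-4\sqrt\rho|\mathbf y_1||\mathbf y_2|\sin\beta_0$. One must also confirm that $\beta_0\not\equiv0,\pi$, which holds because $J\neq0$ on $\Delta_+$. Then the chosen root lies in $(0,\pi)$ and is strictly first.
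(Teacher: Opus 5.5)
Your overall route is the paper's: reduce to the biquadratic $a z^4-(a+\bar a)z^2+\bar a=0$, get the roots $\pm1$ and $\pm\bar a/\vert a\vert$, and select one using $\langle x_0,p_0\rangle>0$. The paper does the selection via $\Im z_2>0$ (from $0<\tau<\pi/\sqrt\rho$) together with $\Im(4\bar a)=\frac{2}{\sqrt\rho}\langle x(0),p(0)\rangle>0$. Your condition $\sin\beta_0<0$ is the same fact, since $\Im\bar a=-\vert\mathbf y_1\vert\vert\mathbf y_2\vert\sin\beta_0$.

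However, the one step that actually decides between $+\bar a/\vert a\vert$ and $-\bar a/\vert a\vert$ is wrong as written. Take $\beta_0\in(-\pi,0)$, with $\beta=\beta_0+2\sqrt\rho t$ increasing. The solutions of $\cos\beta=\cos\beta_0$ beyond $\beta_0$ are, in order, $-\beta_0\in(0,\pi)$, then $\beta_0+2\pi$, then $2\pi-\beta_0$. They correspond to $t=\tau$, $t=\pi/\sqrt\rho$ and $t=\tau+\pi/\sqrt\rho$. So the first return is $\beta=-\beta_0$, the reflection through the maximum of $\vert\mathbf x\vert^2$ at $\beta=0$, as it must be for an outgoing start. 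It is not $2\pi-\beta_0$, which is the reflection through the minimum at $\beta=\pi$ and is the rule for an ingoing start. Your formula $2\sqrt\rho\tau=2\pi-2(\mu_1-\mu_2)$ gives $\sqrt\rho\tau=\pi-\beta_0\in(\pi,2\pi)$, hence $e^{\i\sqrt\rho\tau}=-e^{\i(\mu_2-\mu_1)}=-\bar a/\vert a\vert$. That is the fourth root $z_4$, the opposite of what is claimed. Your next assertions, $\sqrt\rho\tau\in(0,\pi)$ and $e^{\i\sqrt\rho\tau}=e^{\i(\mu_2-\mu_1)}$, contradict that formula rather than follow from it. The underlying issue is that $\beta$ modulo $2\pi$ fixes $\tau$ only modulo $\pi/\sqrt\rho$, hence $z$ only up to sign. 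You cannot reduce $\beta$ mod $2\pi$ at exactly the point where the sign is being decided.

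The repair is one line. From $2\sqrt\rho\tau=-2\beta_0\in(0,2\pi)$ you get $\sqrt\rho\tau=-\beta_0\in(0,\pi)$. Hence $e^{\i\sqrt\rho\tau}=e^{-\i\beta_0}=e^{\i(\mu_2-\mu_1)}=\mathbf y_2\bar{\mathbf y}_1/(\vert\mathbf y_1\vert\vert\mathbf y_2\vert)$, and the root $z=-1$ at $t=\pi/\sqrt\rho$ indeed comes later. With this correction your proof is complete. It is also slightly more explicit than the paper's on one point: the paper takes $\tau<\pi/\sqrt\rho$ for granted when ordering the roots. Your cosine form of $\vert\mathbf x(t)\vert^2$ justifies it, since the function has period $\pi/\sqrt\rho$ and is not at an extremum at $t=0$ because $J\neq0$.
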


\begin{proof} From \eqref{polinom4}, for $g=0$, we obtain that $z=e^{\i\tau\sqrt{\rho}}$ is a solution of the bi-quadratic equation
\begin{equation}\label{polinom2x2}
a z^4-(a+\bar a)z^2+\bar a=0, \qquad a=\mathbf y_1\bar{\mathbf y}_2.
\end{equation}

The roots of \eqref{polinom2x2} are
$\pm 1$ and $\pm \bar a/\vert a\vert$.
They are of modulus $1$ and correspond to the four intersections, with multiplicities in the case of the segment, of   $\mathcal E(\mathbf x_0,\mathbf p_0)$
and $\partial D$.  We need to order the solutions along the trajectory $\mathbf x(t)$, such that
$z_1=1$ (corresponding to $t_1=0$); $z_2=e^{\i t_2\sqrt{\rho}}$ ($0<t_2=\tau<\pi/\sqrt{\rho}$); $z_3=-1$ ($t_3=\pi/\sqrt{\rho}$); $z_4=e^{\i t_4\sqrt{\rho}}$ ($\pi/\sqrt{\rho}<t_4=\tau+\pi/\sqrt{\rho}<2\pi/\sqrt{\rho}$). Thus, we obtain two options for $z_2$:
\[
z_2\in \{\bar a/\vert a\vert,-\bar a/\vert a\vert\}.
\]
Since $\Im(z_2)>0$ and
\begin{align*}
\Im(4\bar a)=&\Im\Big((\mathbf x_0+\frac{\i}{\sqrt{\rho}}\mathbf p_0)(\bar{\mathbf x}_0+\frac{\i}{\sqrt{\rho}}\bar{\mathbf p}_0)\Big)\\
=& \Im\Big(\frac{\i}{\sqrt{\rho}}(\mathbf p_0\bar{\mathbf x}_0+{\mathbf x}_0\bar{\mathbf p}_0)\Big)=\frac{2}{\sqrt{\rho}}\langle x(0),p(0)\rangle>0,
\end{align*}
we get that $z_2=e^{\i\tau\sqrt{\rho}}=\bar a/\vert a\vert$.
\end{proof}

\begin{thm}\label{integrabilniBilijar} The return map $\Theta_+\colon \Delta_+\to \Delta_+$ in the complex notation
is given by
\begin{align*}
&\mathbf x_2=\frac12\Big(\mathbf x_0+\frac{\i}{\sqrt{\rho}}\mathbf p_0\Big)K(\mathbf x_0,\mathbf p_0)+\frac12\Big(\mathbf x_0-\frac{\i}{\sqrt{\rho}}\mathbf p_0\Big)K(\mathbf x_0,\mathbf p_0)^{-1}
+\frac{J(\mathbf x_0,\mathbf p_0)}{H(\mathbf p_0)}\mathbf p_1,\\
&\mathbf p_2=\frac{\i\sqrt{\rho}}2\Big(\mathbf x_0+\frac{\i}{\sqrt{\rho}}\mathbf p_0\Big)K(\mathbf x_0,\mathbf p_0)-\frac{\i\sqrt{\rho}}2\Big(\mathbf x_0-\frac{\i}{\sqrt{\rho}}\mathbf p_0\Big)K(\mathbf x_0,\mathbf p_0)^{-1},
\end{align*}
where
\[
K(\mathbf x_0,\mathbf p_0)=\sqrt{\frac{{\ell^2}+\frac{2}\rho H(\mathbf p_0)+\frac{2}{\sqrt{\rho}}S(\mathbf x_0,\mathbf p_0)}{{\ell^2}+\frac{2}\rho H(\mathbf p_0)-\frac{2}{\sqrt{\rho}}S(\mathbf x_0,\mathbf p_0)}}.
\]

The functions
$S$, $J$, and $H=\frac12\mathbf p\bar{\mathbf p}$ are first integrals of  $\Theta_+$.
The return map $\Theta_+^h$ restricted to $M_h^+=\{H=h\}\cap \Delta_+$ is integrable.
\end{thm}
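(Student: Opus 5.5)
The plan is to compute $\Theta_+=\Phi\circ\Psi$ in two steps: first $\Psi$ explicitly from Lemma \ref{lema2}, then apply $\Phi$ from \eqref{PhiC1}.

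\textbf{Step 1: the map $\Psi$.} Start from $(\mathbf x_0,\mathbf p_0)\in\Delta_+$ and the solution \eqref{van0}. By Lemma \ref{lema2}, $z=e^{\i\tau\sqrt\rho}=\bar a/\vert a\vert$ with $a=\mathbf y_1\bar{\mathbf y}_2$, so
\[
z=\frac{\mathbf y_2\bar{\mathbf y}_1}{\vert\mathbf y_1\vert\vert\mathbf y_2\vert}.
\]
Substituting into \eqref{van0} gives
\[
\mathbf x_1=\mathbf y_1z+\mathbf y_2z^{-1},\qquad \mathbf p_1=\i\sqrt\rho(\mathbf y_1z-\mathbf y_2z^{-1}).
\]
Now $\mathbf y_1 z=\mathbf y_2\,\vert\mathbf y_1\vert/\vert\mathbf y_2\vert$ and $\mathbf y_2z^{-1}=\mathbf y_1\,\vert\mathbf y_2\vert/\vert\mathbf y_1\vert$. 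Setting $K=\vert\mathbf y_1\vert/\vert\mathbf y_2\vert$, this yields
\[
\mathbf x_1=\mathbf y_2K+\mathbf y_1K^{-1},\qquad \mathbf p_1=\i\sqrt\rho(\mathbf y_2K-\mathbf y_1K^{-1}).
\]
With $\mathbf y_2=\frac12(\mathbf x_0+\frac{\i}{\sqrt\rho}\mathbf p_0)$ and $\mathbf y_1=\frac12(\mathbf x_0-\frac{\i}{\sqrt\rho}\mathbf p_0)$, these are exactly the displayed terms. One checks that $\mathbf p_2$ in the statement equals $\mathbf p_1$, which is consistent because $\Phi$ preserves the momentum.

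It remains to compute $K^2=\vert\mathbf y_1\vert^2/\vert\mathbf y_2\vert^2$. Expanding,
\[
4\vert\mathbf y_{1,2}\vert^2=\vert\mathbf x_0\vert^2+\tfrac1\rho\vert\mathbf p_0\vert^2\mp\tfrac{\i}{\sqrt\rho}(\mathbf p_0\bar{\mathbf x}_0-\mathbf x_0\bar{\mathbf p}_0).
\]
Using $\vert\mathbf x_0\vert^2=\ell^2$, $\vert\mathbf p_0\vert^2=2H$ on $\Delta$, and $S=\frac{\i}{2}(\mathbf x\bar{\mathbf p}-\mathbf p\bar{\mathbf x})$, this becomes $\ell^2+\frac2\rho H\pm\frac{2}{\sqrt\rho}S$. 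The sign assignment has to be checked carefully; it fixes which ratio appears in $K$.

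\textbf{Step 2: applying $\Phi$.} Apply \eqref{PhiC1}: $\mathbf x_2=\mathbf x_1+\tau_1\mathbf p_1$ and $\mathbf p_2=\mathbf p_1$, with $\tau_1=-2J(\mathbf x_1,\mathbf p_1)/\vert\mathbf p_1\vert^2$. Energy conservation at $\Delta$ gives $\vert\mathbf p_1\vert^2=2H(\mathbf p_0)$. Since $\Psi$ reverses the radial component, $J(\mathbf x_1,\mathbf p_1)=-J(\mathbf x_0,\mathbf p_0)$, which follows from the reflection structure as in \eqref{refleksija} or by direct computation. Hence $\tau_1=J(\mathbf x_0,\mathbf p_0)/H(\mathbf p_0)$, giving the stated formula.

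\textbf{Step 3: invariants and integrability.} $H$ and $S$ are first integrals of the flow, hence of $\Theta_+$. $J$ is preserved because $\Psi$ sends $J\mapsto -J$ and the straight-line map $\Phi$ also sends $J\mapsto -J$ (the chord is symmetric). Equivalently, by rotational symmetry $\Theta_+$ is a rotation on $\Delta_+$, as in Theorem \ref{th:billiard}, and rotations preserve $J$. On $M_h^+$, the function $S$ is a nonconstant invariant whose level sets are the circles $\delta^+_{h,s}$ (using $S^2+J^2=2\ell^2\langle p,p\rangle$). The map acts on each such circle as a rotation by $\Delta\varphi_{h,s}$, which gives integrability.

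The main obstacle is bookkeeping: getting the sign in $K$ right, justifying the branch $z_2=\bar a/\vert a\vert$ (already done in Lemma \ref{lema2}), and verifying $J(\mathbf x_1,\mathbf p_1)=-J(\mathbf x_0,\mathbf p_0)$ in the complex notation. The last point should follow from $\vert\mathbf x_1\vert=\ell$ together with the time-reversal symmetry of the ellipse arc about its apex.
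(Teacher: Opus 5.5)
Your proposal is correct and follows the paper's proof essentially step for step. You obtain $\Psi$ from Lemma \ref{lema2} as $\mathbf x_1=\mathbf y_2K+\mathbf y_1K^{-1}$, $\mathbf p_1=\i\sqrt\rho(\mathbf y_2K-\mathbf y_1K^{-1})$ with $K=\vert\mathbf y_1\vert/\vert\mathbf y_2\vert$; then $J(\mathbf x_1,\mathbf p_1)=-J(\mathbf x_0,\mathbf p_0)$ gives $\tau_1=J/H$; then $J(\mathbf x_2,\mathbf p_2)=J(\mathbf x_0,\mathbf p_0)$ and the map rotates the circles $\delta^+_{h,s}$. Two small points: your own expansion already settles the sign you flagged, since $4\vert\mathbf y_1\vert^2=\ell^2+\frac2\rho H+\frac2{\sqrt\rho}S$; and the side relation should read $S^2+J^2=\ell^2\langle p,p\rangle=2\ell^2H$, although nothing in your argument depends on that constant.
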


Note that, as in the $C^1$-case with $g=0$, there is the relation $S^2+J^2=2{\ell^2}H$ and $J$ is not a first integral of the  continuous system.

\begin{proof}
Let $(\mathbf x_1,\mathbf p_1)=\Psi(\mathbf x_0,\mathbf p_0)$ (see Figure \ref{fig:prsten}). According to Lemma \ref{lema2}, we have
\[
\mathbf x_1=\mathbf x(\tau)=
\mathbf y_1\frac{\mathbf y_2\bar{\mathbf y}_1}{\vert\mathbf y_1\vert\vert{\mathbf y_2}\vert}+
\mathbf y_2\frac{\mathbf y_1\bar{\mathbf y}_2}{\vert\mathbf y_1\vert\vert{\mathbf y_2}\vert}=
\mathbf y_2\frac{\vert\mathbf y_1\vert}{\vert{\mathbf y_2}\vert}+\mathbf y_1\frac{\vert\mathbf y_2\vert}{\vert{\mathbf y_1}\vert}.
\]

Next, since
\[
\mathbf p(t)=\frac{d}{dt}\mathbf x(t)=i\sqrt{\rho}\mathbf y_1e^{\i\sqrt{\rho}t}-i\sqrt{\rho}\mathbf y_2e^{-\i\sqrt{\rho}t},
\]
we get
\[
\mathbf p_1=\mathbf p(\tau)=
\i\sqrt{\rho}\mathbf y_2\frac{\vert\mathbf y_1\vert}{\vert{\mathbf y_2}\vert}-\i\sqrt{\rho}\mathbf y_1\frac{\vert\mathbf y_2\vert}{\vert{\mathbf y_1}\vert}.
\]
Note that
\[
J(\mathbf x_1,\mathbf p_1)=\frac12(\bar{\mathbf x}_1\mathbf p_1+\mathbf x_1\bar{\mathbf p}_1)=-\frac12(\bar{\mathbf x}_0\mathbf p_0+\mathbf x_0\bar{\mathbf p}_0)=-J(\mathbf x_0,\mathbf p_0).
\]

The map $\Phi$ is the same as in the $C^1$ case,   see \eqref{PhiC1}. Let $(\mathbf x_2,\mathbf p_2)=\Phi(\mathbf x_1,\mathbf p_1)$:
\[
\mathbf x_2=\mathbf x_1+ \tau_1 \mathbf p_1,  \quad \mathbf p_2= \mathbf p_1, \quad   \tau_1=
-\frac{\bar{\mathbf x}_1\mathbf p_1+\mathbf x_1\bar{\mathbf p}_1}{\mathbf p_1\bar{\mathbf p}_1}=\frac{J(\mathbf x_0,\mathbf p_0)}{H(\mathbf p_0)}.
\]
Therefore,
\[
J(\mathbf x_2,\mathbf p_2)=J(\mathbf x_1,\mathbf p_1)+\tau_1 \mathbf p_1\bar{\mathbf p}_1=J(\mathbf x_0,\mathbf p_0).
\]

Finally,  from the identities
\begin{align*}
&\vert \mathbf y_1\vert =\frac12 \sqrt{(\mathbf x_0-\frac{i}{\sqrt{\rho}}\mathbf p_0)(\bar{\mathbf x}_0+\frac{i}{\sqrt{\rho}}\bar{\mathbf p}_0)}
=\frac12\sqrt{{\ell^2}+\frac{2}\rho H(\mathbf p_0)+\frac{2}{\sqrt{\rho}}S(\mathbf x_0,\mathbf p_0)},\\
&\vert \mathbf y_2\vert =\frac12 \sqrt{(\mathbf x_0+\frac{i}{\sqrt{\rho}}\mathbf p_0)(\bar{\mathbf x}_0-\frac{i}{\sqrt{\rho}}\bar{\mathbf p}_0)}
=\frac12\sqrt{{\ell^2}+\frac{2}\rho H(\mathbf p_0)-\frac{2}{\sqrt{\rho}}S(\mathbf x_0,\mathbf p_0)}
\end{align*}
we get the required algebraic expression for $\Theta_+$, as stated in the formulation of theorem.

The mappings $\Phi$ and $\Psi$ are induced from the  $C^{\infty}$-smooth  Hamiltonian flows. Therefore, their composition $\Theta_+^h=\Phi\circ\Psi\vert_{M_h^+}$
restricted to $M_h^+=\{H=h\}\cap \Delta_+$ preserves the canonical symplectic form.
The dynamics is a rotation of invariant   circles
$\delta^+_{h,s}=M_h \cap \{S=s\}$ depending on the value $s$.
\end{proof}
\begin{figure}[h]
{\centering{\includegraphics[width=8cm]{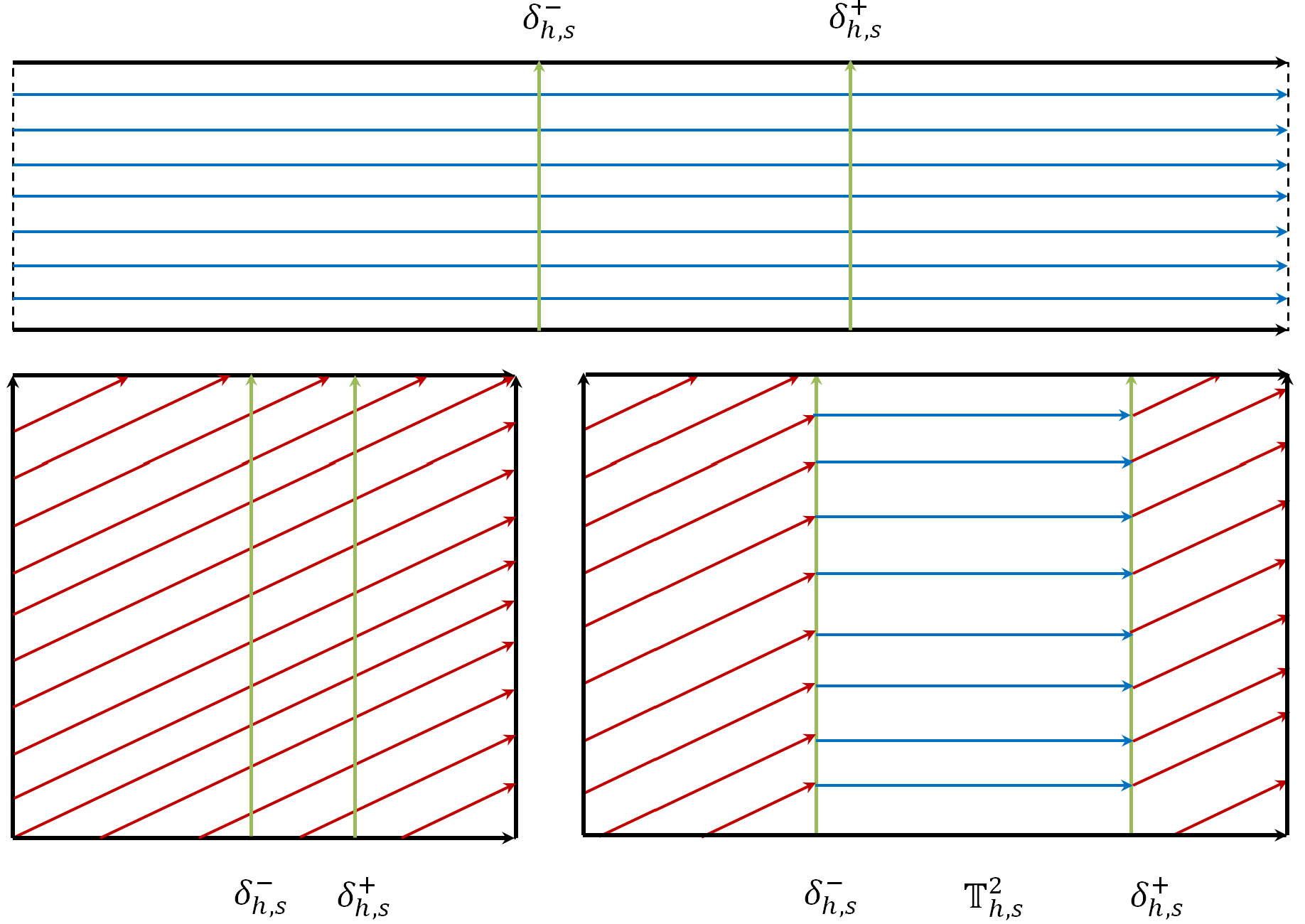}}
\caption{The invariant cylinder of the free motion (up), the resonant torus for the Hook potential (down, left), and $C^0$-invariant torus $\mathbb T^2_{h,s}$ with $C^0$-dynamics
(down, right). The return maps $\Theta_\pm$ are rotations of the invariant  circles $\delta^\pm_{h,s}$.\label{fig:lepljenje}}}
\end{figure}
At the level of $C^0$-gluing of two super-integrable systems, we have a similar situation as
illustrated in Figure \ref{fig:presek}. However, besides the fact that the invariant tori $\mathbb T^2_{h,s}$ are of the class $C^0$ along the intersections $\delta^\pm_{h,s}$, the difference is that, before gluing, all regular invariant tori defined by the Hamiltonian
with the Hook potential and the area integral $S$ are $2:1$-resonant and the system is super-integrable. After the gluing, the resulting tori $\mathbb T^2_{h,s}$ are generically non-resonant, which is schematically shown in Figure \ref{fig:lepljenje}.
Note also that the hypersurface $\Delta$ is an invariant variety of the Hamiltonian flow of the function $S$ and that circles $\delta^\pm_{h,s}$ are  trajectories of the corresponding Hamiltonian flow.

In the action-angle coordinates $(I_1,I_2,\psi_1,\psi_2)$ of the system, obtained via the same construction as in Theorem \ref{th:integrability}, the trajectories on
$\mathbb T^2_{h,s}$ smooth out and become straight lines.

\section{$C^1$ and $C^0$ integrability via gluing of integrable systems}\label{sec5}

\subsection{Definitions}

In this concluding Section,
we formalize and generalize the constructions presented in Sections \ref{sec2}, \ref{sec3}, and  \ref{sec4}.
Let $M$ be a four-dimensional symplectic manifold and $\Delta$ a hypersurface, such that $M\setminus\Delta$ has two  connected components, say $M_1$ and $M_2$.
Consider two $C^\infty$-smooth Hamiltonian systems with Hamiltonian
functions $H_1$ and $H_2$ respectively, such that $H_1\vert_{\Delta}=H_2\vert_{\Delta}$, and that $H_1$ and $H_2$ are functionally independent and different almost everywhere in a neighborhood of $\Delta$. Let
\begin{equation*}  H(x)=
\begin{cases}
        H_1(x), & \text{for}\quad x\in M_1\cup\Delta,\\
        H_2(x), & \text{for}\quad  x\in M_2\cup\Delta.
    \end{cases}
\end{equation*}
If $dH_1(x)=dH_2(x)$  for all $x\in \Delta$, then $H$ is $C^1$-smooth, otherwise it is of a class $C^0$ on $\Delta$.

We additionally assume that $\Delta$ has a decomposition $\Delta_+\cup \Delta_-\cup \Delta_0$, where $\Delta_0$ is a  submanifold of  a maximal dimension two, such that $\Delta_+$ and $\Delta_-$ are transversal to the considered Hamiltonian vector fields  $X_{H_1}$ and $X_{H_2}$, and
at the points of $\Delta_+$, the Hamiltonian vector fields $X_{H_1}$ and $X_{H_2}$ are directed into $M_2$,
while at the points of $\Delta_-$, $X_{H_1}$ and $X_{H_2}$ are directed into $M_1$.

Then we can define a Hamiltonian system with $C^0$-Hamiltonian $H$ using the rules analogous to \textbf{R0}, \textbf{R1}, and \textbf{R2} given in Section \ref{sec3}.

\begin{itemize}
\item{} On $\Delta_+\cup\Delta_-$, there is a natural gluing of the flows of vector fields $X_{H_1}$ and $X_{H_2}$.

\item{} If $X_{H_1}(x_0)$ and  $X_{H_1}(x_0)$ are tangent to $\Delta$ and
the trajectories of $X_{H_1}$ and $X_{H_2}$
with the initial condition $x(0)=x_0\in\Delta_0$ belong to $M_1\cup\{x_0\}$ for $t\in(-\varepsilon,\varepsilon)$,  then the local flow is defined by $X_{H_1}$.

\item{} If $X_{H_1}(x_0)$ and  $X_{H_1}(x_0)$ are tangent to $\Delta$ and
the trajectories of $X_{H_1}$ and $X_{H_2}$
with the initial condition $x(0)=x_0\in\Delta_0$ belong to $M_2\cup\{x_0\}$ for $t\in(-\varepsilon,\varepsilon)$,  then the local flow is defined by $X_{H_2}$.

\item{} If $x_0\in\Delta_0$ is the equilibrium of both systems, then it is the equilibrium of the system with the Hamiltonian $H$.

\item{} At the remaining points of $\Delta_0$,  the local dynamics is not defined.

\end{itemize}

If both Hamiltonian systems defined by Hamiltonians $H_1$ and $H_2$ are integrable with additional $C^\infty$-smooth first integrals $K_1$ and $K_2$ respectively, such that $K_1\vert_{\Delta}=K_2\vert_{\Delta}$, then the function $K(x)$ defined by
\begin{equation*}K(x)=
\begin{cases}
        K_1(x), & \text{for}\quad  x\in M_1\cup\Delta,\\
        K_2(x), & \text{for}\quad  x\in M_2\cup\Delta,
    \end{cases}
\end{equation*}
is a $C^0$-first integral of the Hamiltonian system with Hamiltonian $H$.
In this case, we say that the Hamiltonian system with the Hamiltonian $H$ is \emph{$C^0$-integrable obtained by $C^0$-gluing of
integrable systems $(H_1,K_1)$ and $(H_2,K_2)$ along $\Delta$}.
If both $H$ and $K$ are of the class $C^1$, then we say that
the Hamiltonian system with the Hamiltonian $H$ is \emph{$C^1$-integrable obtained by $C^1$-gluing of
integrable systems $(H_1,K_1)$ and $(H_2,K_2)$ along $\Delta$}.

As in the case of the first integral $S$  in Sections \ref{sec2} and \ref{sec4}, we have:

\begin{lem}\label{posledica} Let the Hamiltonian system with the Hamiltonian $H$ be a \emph{$C^0$-integrable obtained by $C^0$-gluing of
integrable systems $(H_1,K_1)$ and $(H_2,K_2)$ along $\Delta$.}
Assume that  $K=K_1=K_2$
and that all  connected components of generic regular invariant manifolds $\{H_2=h,K=k\}$ are two-dimensional tori.
Then $\Delta$ is an invariant manifold of $X_{K}$ and the Hamiltonian flow of ${K}$ is super-integrable near $\Delta$  with closed  generic trajectories.
\end{lem}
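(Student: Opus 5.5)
Since $K=K_1=K_2$ is a single $C^\infty$ function near $\Delta$ that Poisson commutes with both $H_1$ and $H_2$, the plan is to show that $X_K$ is tangent to $\Delta$ and that its trajectories there lie in compact one-dimensional pieces of the Liouville tori of $(H_2,K)$.

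\emph{Step 1: $\Delta$ is $X_K$-invariant.} The function $F=H_1-H_2$, defined and smooth in a neighbourhood of $\Delta$, vanishes on $\Delta$. We have $X_K(F)=\{F,K\}=\{H_1,K\}-\{H_2,K\}=0$, since $K=K_1$ is an integral of $H_1$ and $K=K_2$ is an integral of $H_2$. So $F$ is constant along the flow of $X_K$, and every level set of $F$ near $\Delta$ is invariant. This gives $X_K$-invariance of $\Delta$ if $\Delta$ is, at least locally, a connected component of $\{F=0\}$. This is the step I expect to be the main obstacle. I would first try to use the assumption that $H_1$ and $H_2$ are functionally independent and different almost everywhere near $\Delta$. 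Where $dF\ne0$ on $\Delta$, $\Delta$ is a regular piece of $\{F=0\}$, and the invariance follows. Where $dF=0$ (the $C^1$-gluing case), I would argue differently: $X_K$ is tangent to the level sets of both $H_1$ and $H_2$. In the examples the extra structure settles this directly, since $X_S$ generates rotations preserving $|x|=\ell$. In general I would add the natural hypothesis that $\Delta$ is a component of $\{H_1=H_2\}$, which is how the paper's construction implicitly defines it.

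\emph{Step 2: super-integrability of $X_K$.} Near $\Delta$, the functions $H_1$, $H_2$, $K$ are all integrals of $X_K$, because $\{H_i,K\}=0$. They are functionally independent almost everywhere: $H_1,H_2$ are independent by assumption, and $K$ is independent of $H_2$ since $(H_2,K)$ is an integrable pair. Three independent integrals in dimension four make $X_K$ super-integrable, i.e.\ non-commutatively integrable with one-dimensional invariant manifolds $\{H_1=c_1,H_2=c_2,K=k\}$. This mirrors $X_S$ preserving $H_1$, $H_2$ and $|x|^2$ in Sections~\ref{sec2} and~\ref{sec4}.

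\emph{Step 3: closedness of generic trajectories.} A generic trajectory of $X_K$ lies in a connected component of a regular level $\{H_2=h,K=k\}$, which by hypothesis is a compact two-torus. On that torus $X_{H_2}$ and $X_K$ are commuting, linearly independent vector fields, so by Liouville--Arnol'd $X_K$ is a linear flow. The trajectory also lies in the curve $\{H_1=c_1\}\cap\mathbb T^2$. For generic values the $X_K$-orbit is a connected component of this one-dimensional submanifold of the torus. A compact one-dimensional manifold component is a circle, so the orbit is closed. Equivalently, the linear flow on $\mathbb T^2$ has a nonconstant smooth invariant $H_1$, so it is resonant, and all its orbits are periodic. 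On $\Delta$ itself one can take $c_1=c_2$, recovering the circles $\delta^\pm_{h,s}$ of Remark~\ref{S=s}.
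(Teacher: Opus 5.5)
Your Step 1 does not prove the first claim under the stated hypotheses. You only get tangency of $X_K$ to $\Delta$ where $d(H_1-H_2)\neq 0$. For the remaining points you fall back on assuming that $\Delta$ is a component of $\{H_1=H_2\}$, which is not part of the statement. Those remaining points include all of $\Delta$ in the $C^1$-gluing case, where $dH_1=dH_2$ along $\Delta$.

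The missing idea is a flow-out argument using the hypothesis that $H_1\neq H_2$ almost everywhere near $\Delta$, which you quote but never use. Suppose $X_K(x_0)\notin T_{x_0}\Delta$ for some $x_0\in\Delta$. Then transversality persists on a neighbourhood $W\subset\Delta$ of $x_0$. By the inverse function theorem, the map $(t,y)\mapsto\phi^t_K(y)$, $y\in W$, $\vert t\vert<\varepsilon$, covers an open subset of $M$. Since $F=H_1-H_2$ vanishes on $\Delta$ and $X_K(F)=\{F,K\}=0$ (your own computation), $F\equiv 0$ on this open set, a contradiction. So $X_K$ is tangent to $\Delta$ everywhere and $\Delta$ is invariant, with no condition on $dF$. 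This is precisely the paper's argument, phrased there via a positive-measure set of transversality points together with the openness of transversality.

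Step 2 has a second gap: independence of the pairs $(H_1,H_2)$ and $(H_2,K)$ does not give $dH_1\wedge dH_2\wedge dK\neq 0$. The paper uses the triple $H_1,H_2,K$ only as an alternative, under the explicit extra assumption that these three functions are independent. Its actual proof gets the additional integral from resonance, in four steps:
\begin{enumerate}
\item A regular torus $\mathbb T^2_{h,k}$ meets $\Delta_\pm$ transversally, because $X_{H_2}$ is transversal to $\Delta_\pm$.
\item By Step 1, the $X_K$-orbit of a point of $\mathbb T^2_{h,k}\cap\Delta$ stays in this compact curve, hence is a circle.
\item Linearity of $X_K$ in action--angle coordinates makes every orbit on the torus closed.
\item This holds on all tori of a toroidal neighbourhood, so the continuous frequency ratio of $X_K$ is a constant $m:n$, and $n\psi_1-m\psi_2$ modulo $2\pi$ gives an extra integral.
\end{enumerate}

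Your Step 3 reaches closedness by a different but workable route: a nonconstant $X_K$-invariant $H_1|_{\mathbb T^2}$ forces the linear flow to be resonant. This even bypasses Step 1. You must, however, justify two things: that $H_1$ is defined on the whole torus, and that $H_1|_{\mathbb T^2}$ is nonconstant. For the latter, note that if $H_1\equiv h$ on $\mathbb T^2_{h,k}$ for a positive-measure set of $(h,k)$, then $H_1=H_2$ on a set of positive measure. Adding the constant-resonance observation to your Step 3 would then repair Step 2.
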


\begin{proof}
Assume that there exists a set $W\subset \Delta$ of positive measure, where $X_{K}$ is transversal to $\Delta$.
Let $x_0\in W$ and consider the trajectory $x(t)$ of $X_K$ with the initial condition $x(t)=x_0$. Both Hamiltonian functions $H_1$ and $H_2$ are preserved along the Hamiltonian flow of $K$ and $H_1(x(t))=H_2(x(t))$. As a result, we obtain a set of positive measure where $H_1=H_2$, which contradicts our assumption.
Thus, the set $W$ where $X_K$ is transversal to $\Delta$ is of measure zero. Since $X_K$  is a $C^\infty$-smooth vector field, we conclude that $W=\emptyset$.

Since $X_{H_2}$ is not tangent to $\Delta_+\cup\Delta_-$, it is clear that regular invariant tori intersect   $\Delta_+\cup\Delta_-$ transversally.
Now, assume that $x_0\in \Delta$ belongs to a regular invariant torus $\mathbb T^2_{h,k}$, a component of $\{H_2=h,K=k\}$
that intersects transversally $\Delta$. Whence, the trajectory $x(t)$ is a  topological circle, a connected component of $\delta_{h,k}=\mathbb T^2_{h,k} \cap \Delta$.
On the other hand, in the action-angle coordinates near the torus $\mathbb T^2_{h,k}$, all the trajectories of $X_K$ are linearized. Since $x(t)$ is closed, all the other trajectories on $\mathbb T^2_{h,k}$ are closed as well, with the same resonance $m:n$. We apply the above arguments for a toric foliation in a toroidal neighborhood $U$ of $\mathbb T^2_{h,k}$ and conclude that  all the trajectories are  resonant with the same resonance $m:n$. Thus, besides $H_2$, there is an additional independent first integral  of the Hamiltonian system with the Hamiltonian $K$ within $U$. \end{proof}

Alternatively, if the functions $H_1$, $H_2$ and $K$ are independent, we directly have super-integrability of $X_K$.

Lemma \ref{posledica} implies  that a similar situation as presented in Figures \ref{fig:presek} and \ref{fig:lepljenje}  occurs here: the gluing of invariant tori of the Hamiltonian system defined by ${H_2}$ and invariant cylinders or tori of the Hamiltonian system defined by $H_1$ along the cycles of the system defined by $K$.

The above concept of $C^0$-integrability via $C^0$-gluing of integrable systems can be easily extend to Hamiltonian systems on symplectic manifolds of an arbitrary dimension.

\subsection{Natural mechanical systems}

The basic examples are natural mechanical systems on Riemannian manifolds $(Q,g)$ with two different potential functions $V_1$ and $V_2$. We assume that $V_1\vert_\Gamma=V_2\vert_\Gamma$, where $\Gamma$ is a hypersurface in $Q$, such that $Q\setminus\Gamma$ has two connected components $Q_1$ and $Q_2$.
Then $M=T^*Q$, $M_1=T^*Q_1$, $M_2=T^*Q_2$, $\Delta=\{(x,p)\in T^*Q\,\vert\, x\in \Gamma\}$, and
\begin{align*}
&\Delta_0=\{(x,p)\in\Delta\,\vert\, \xi=g(p)\in T_x\Gamma\},\\
&\Delta_+=\{(x,p)\in\Delta\,\vert\, \xi=g(p)\in T_x Q \, \text{is directed into} \, Q_2\},\\
&\Delta_-=\{(x,p)\in\Delta\,\vert\, \xi=g(p)\in T_x Q \, \text{is directed into} \, Q_1\}.
\end{align*}

As in Sections \ref{sec3} and \ref{sec4}, the projection of trajectories  to the configuration space $Q$  will be $C^1$-smooth at $\Gamma$.

\begin{exm}[The Lagrange top and the symmetric Euler top] Consider  the configuration space $Q=SO(3)$, a group of rotations in the Euclidean space $\R^3$, with a left-invariant Riemannian metric induced by the inertia operator $I=\diag(I_1,I_1,I_3)$ defining  the kinetic energy of a symmetric rigid body motion about a fixed point  (see \cite{Ar}). If the rigid body is placed in a homogeneous gravitational field, we get the Lagrange top with the potential function $V_1=mg \langle \vec{\chi},\vec{\gamma}\rangle=mg\chi\gamma_3$, where $m$ is the mass, $\vec{\chi}=(0,0,\chi)$ is the position of the center of the mass of the body and
$\vec{\gamma}=(\gamma_1,\gamma_2,\gamma_3)$ is the direction of the gravitational field in the body reference frame. Let $\Gamma\subset SO(3)$ be the set of position of the rigid body where $\langle \vec{\gamma},\vec{\chi}\rangle=0$.  The set $\Gamma$ is diffeomorphic to a 2-dimensional torus $\mathbb T^2$. Together with the Lagrange top, we consider the corresponding symmetric Euler top: $V_2\equiv  0$. Both systems are completely integrable with common commuting integrals $\langle \overrightarrow{\mathbf m},\vec{\gamma}\rangle$ and $m_3$, where $\overrightarrow{\mathbf m}=(m_1,m_2,m_3)$ is the angular momentum in the moving reference frame.
Let $Q_1$ be the set where $\langle \vec{\gamma},\vec{\chi}\rangle=0<0$ and $Q_2$ be the set where $\langle \vec{\gamma},\vec{\chi}\rangle=0>0$. In such a way,
by gluing of the Lagrange top and the Euler top, we get a $C^0$-integrable system on $SO(3)$.
\end{exm}

\begin{exm}[The Kepler problem and the Hook potential in $\R^n$]\label{kepler}
Let $B=\{x\in\R^n\,\vert\, \langle x,x\rangle<\ell^2\}$, $\Gamma=\partial B$, $B=Q_1$, $Q_2=\R^n\setminus (D\cup\partial B)$.
Consider the $C^1$-gluing of superintegrable systems
\begin{align*}
& H_1=\frac12\langle p,p\rangle-\frac{\kappa}{\vert x\vert},\\
& H_2=\frac12\langle p,p\rangle+\frac{1}{2}\sigma\langle x,x\rangle-\frac32\sigma\ell^2, \qquad \sigma=\frac{\kappa}{\ell^3}
\end{align*}
along $\Delta$ ($H_1\vert_\Delta=H_2\vert_\Delta$ and $dH_1\vert_\Delta=dH_2\vert_\Delta$). For $n=2$, this is a special case of the refraction billiard within circle
$\Gamma$ considered by
De Blasi and Terracini \cite{dBT}. The resulting system is $C^1$-noncommutatively integrable with 2-dimensional invariant manifolds. Besides the Hamiltonian function, we have the Noether integrals $\phi_{ij}=x_ip_j-x_jp_i$, $1\le i<j\le n$, that are common integrals both of the Kepler problem, and of the motion under the influence of the Hook potential. We can also consider the gluing with the Hamiltonian $H=H_1$ within $M_2$ and $H=H_2$ within $M_1$.
In such a way, we obtain a system without singularity at the origin.
\end{exm}

\begin{exm}[$C^0$-model of bungee in $\R^n$]
As a direct generalization of Section \ref{sec3},
consider the $C^0$-gluing of superintegrable systems
\begin{align*}
& H_1=\frac12\langle p,p\rangle+g x_n,\\
& H_2=\frac12\langle p,p\rangle+g x_n+\frac{1}{2}\rho\langle x,x\rangle-\frac12\rho\ell^2, \qquad \frac{g}{\rho}>\ell
\end{align*}
along $\Delta$ ($H_1\vert_\Delta=H_2\vert_\Delta$) with $\Gamma=\partial B$, $B=Q_1$, $Q_2=\R^n\setminus (D\cup\partial B)$ (see Example \ref{kepler}).
 The resulting system is $SO(n-1)$-invariant and has the Noether integrals $\phi_{ij}=x_ip_j-x_jp_i$, $1\le i<j\le n-1$.  Due to $SO(n-1)$-symmetry it follows that the for every solution $(x(t),p(t))$, there exist a matrix
 \[
 \mathbf R=
 \left(
\begin{array}{cc}
\mathbf R_0 & 0  \\
0 & 1
\end{array} \right), \qquad \mathbf R_0\in SO(n-1),
\]
such that the trajectory
 $(\mathbf R x(t),\mathbf R p(t))$ belongs to the space 6-dimensional symplectic linear subspace $\{(x_1,x_2,0,\dots,0,x_n,p_1,p_2,0,\dots,0,p_n)\}$.
Therefore, without losing a generality we may assume that $n=3$. When the value of the area integral $S=\phi_{12}=x_1p_2-p_1x_2$ is equal to zero, the problem reduces to a two-dimensional system studied in Section \ref{sec3}. For $S=s\ne 0$ we can pass to the canonical cylindric coordinates $(r,\varphi,x_3,p_r,p_\varphi=S,p_3)$ and to remove the cyclic coordinate $\varphi$.  We get the integrable reduced systems on $T^*(\R_+\times\R)$:
\begin{align*}
& H^s_1=\frac12 \big(p_r^2+p_3^2\big)+\frac{s^2}{2r^2}+g x_3,\\
& H^s_2=\frac12 (p_r^2+p_3^2)+\frac{s^2}{2r^2}+g x_3+\frac{1}{2}\rho(r^2+x_3^2)-\frac12\rho\ell^2.
\end{align*}
As Section \ref{sec3} indicates for $s=0$, the resulting system is non-integrable.
\end{exm}

It will be very interesting to find  natural examples of $C^0$  and $C^1$ gluing of integrable systems  with  first integrals that are of the class $C^0$ on $\Delta$.

\subsection*{Acknowledgements}
This research has been supported by the Project IntegraRS of the Science Fund
of Serbia, Mathematical Institute of the Serbian Academy of Sciences and Arts and the
Ministry for Education, Science, and Technological Development of Serbia, and the Simons
Foundation grant no. 854861.  We thank Dmitry Treschev for a valuable discussion.

\end{document}